\newtheorem{prop}{Proposition}[section]
\newtheorem{lemma}{Lemma}[section]
\newtheorem{remark}{Remark}[section]
\newtheorem{coro}{Corollary}[section]
\newtheorem{definition}{Definition}[section]
\newcommand{\R}{\mathbb{R}}
\newcommand{\Z}{\mathbb{Z}}
\newcommand{\N}{\mathbb{N}}
\newcommand{\C}{\mathbb{C}}
\newcommand{\dd}{\mathrm{d}}
\newcommand{\tr}{\text{tr}}
\title{Geometric background for the Teukolsky equation revisited}
\author{Pascal Millet}
\affil{Université Grenoble Alpes, Institut Fourier,\\
100 rue des Maths, 38610 Gières, France\\
pascal.millet@univ-grenoble-alpes.fr}
\date{}
\begin{document}
\maketitle

\begin{abstract}
We present in detail the geometric framework necessary to understand the Teukolsky equation and we develop in particular the case of Kerr spacetime.
\end{abstract}
\textbf{Keywords.} Teukolsky equation, spin weighted functions, GHP formalism, spin geometry
\tableofcontents

\section{Introduction}
The Teukolsky equation (introduced in \cite{teukolsky1973perturbations}) is a differential equation on spin weighted functions. The geometrical framework necessary to understand these objects includes spin geometry, the Newman Penrose formalism (see \cite{newman1962approach} and \cite{newman1963errata}) and the closely related GHP formalism (introduced in \cite{geroch1973space}). There are some good general presentations of these formalisms in the literature: \cite{chandrasekhar1998mathematical} (Newman Penrose formalism and spin geometry), \cite{penrose1984spinors} (Newman Penrose formalism and spin geometry) \cite{andersson2015spin}(spin geometry), \cite{daude2018asymptotic} section 2.4(spin geometry and GHP formalism), \cite{aksteiner2011linearized} section 2.1(GHP formalism), \cite{aksteiner2019new}(GHP formalism), \cite{harnett1990ghp} (geometrical definition of spin weighted functions close to the one we use here and GHP formalism) and \cite{curtis1978complex}(interpretation of spin weighted functions as complex line bundles).

The goal of these notes is to synthesize the minimal geometric background necessary to understand how spin weighted functions appear in the study of the Teukolsky equation and to provide detailed computations in the case of the Kerr spacetime. In particular, we explicit the link, between the abstract definition of spin weighted functions (relying on the Newman Penrose formalism) and the more concrete definition involving the Hopf bundle. This second point of view is used in \cite{dafermos2019boundedness}, \cite{shlapentokh2020boundedness} and \cite{da2020mode} in the context of analysis of the Teukolsky equation on a Kerr background but also in \cite{goldberg1967spin} and \cite{eastwood1982edth} in the context of spin weighted functions on the two dimensional sphere. For simplicity, we do not consider here the interaction of the GHP formalism with the conformal structure of the space-time but this point of view (developped by Araneda in \cite{araneda2018conformal}) enables to give even more geometrical meaning to the Teukoslky operator (with in particular a natural definition of the Teukolsky connection). We do not claim to prove new results in this paper but we think that this synthesis will be useful for analysts looking for a simple and detailed introduction to the geometric framework of the Teukolsky equation in the Kerr spacetime. 

In a first part, we introduce the general definitions spin weighted functions on a general Petrov D type space-time. In the second part, we compute the topology of the bundles in the case of a Kerr space-time. In the third part, we introduce (with details) the various connections and define the GHP operators. We choose a geometrical approach using principal connections which is not the most direct but is quite natural. We used the references cited in this introduction but we reformulate everything needed in an essentially self contained way (except for some punctual peripheral propositions).
\subsection*{Acknowledgements}
I am very greatful to my phD advisor Dietrich Häfner for numerous fruitful discussions about this work. I also thank Peter Hintz and Bernard F. Whiting for their comments and corrections on the first version of the paper which helped to improve it. Some of the computations in this paper have been verified using the python library sympy (see \cite{10.7717/peerj-cs.103}).

\subsection{Conventions and notations}
In this paper, we use the sign convention $(+,-,-,-)$ for Lorentzian metric. For example, the Minkowski metric on $\R^4$ will be given by the matrix $\eta = \begin{pmatrix}1 & 0 & 0 & 0\\ 0 & -1 & 0 & 0 \\ 0 & 0 &-1 &0 \\ 0 & 0 & 0 & -1 \end{pmatrix}$.

If we have a Cartesian product $A\times B$ we denote by $\text{pr}_A$ the projection on $A$.

Let $k\in \Z$ and $n\in \N$. We denote by $[k]_n$ the image of $k$ by the projection $\Z \rightarrow \Z/n\Z$. More generally, we will note $[x]$ the image of $x$ by the projection map when we have a quotient space.

If $E$ is a smooth vector bundle (real or complex) over a manifold $\mathcal{M}$, we denote by $E'$ the dual vector bundle (for every $x\in \mathcal{M}$, $(E')_x$ is the space of linear forms on $E_x$). We denote by $\Gamma(E)$ the set of smooth sections of $E$.

If $\nabla$ is a linear connection acting on a complex vector bundle, we sometimes want to compute $\nabla_X$ where $X$ is a section of the complexified tangent space. In this case, we simply extend the connection by $\C$-linearity, in other words, we define $\nabla_X:=\nabla_{\Re(X)} + i\nabla_{\Im(X)}$.

\section{Definition of the bundles}
Let $\mathcal{M}$ be a space-time (4 dimensional time and space oriented Lorentzian manifold, globally hyperbolic). Since the space-time is globally hyperbolic, there is a global time function and a 1+3 decomposition. A time orientation is the choice of a global timelike vector field and a space orientation is an orientation of the spatial part in the 1+3 decomposition.
\begin{definition}
For $x\in \mathcal{M}$, an oriented basis of $T_x\mathcal{M}$ is a basis $(b_1, ... ,b_4)$ with $b_1$ future oriented and timelike and $(b_2, b_3, b_4)$ spacelike with their spatial part (in the 1+3 decomposition) being an oriented basis.
\end{definition}
Note that a more intrinsic (but equivalent) way of defining time orientation would be: a continuous choice of a connected component of the lightcone on each point. A more intrinsic way of defining a space orientation would be: a smooth choice of a connected component of the set of triple $(b_2,b_3,b_4)$ of independent spacelike vector fields at each point.

The principal motivation to introduce spin weighted functions is the following: In the case where the space-time is of Petrov type D, we would like to find a global Newman Penrose null tetrad containing two vectors in the principal null directions (see definition \ref{principalNullVector}). This tetrad can be associated (up to a sign ambiguity) with a spin frame which is adapted to the geometry. Indeed, it ensures the vanishing of some associated spin coefficients and therefore simplifies the component expression of the Dirac operator. We can even hope to reduce tensor equations to scalar decoupled equations for some components.
However, it is in general not possible to find such a tetrad (and such spin frame) globally for topological reasons (see for example the computations on Kerr). However, it is possible to choose a global tetrad "up to some complex factor" (and similarly for the spin frame). Rigorously, this "almost tetrad" is defined as the smooth bundle of all four vectors satisfying the Newman Penrose conditions (or the bundle of all normalized spin frame associated to it). The analog of components in this "almost tetrad" can then be defined for co-tensors (and co-spinors in the almost spin-frame). Such components can be interpreted as sections of some complex line bundle $\mathcal{B}(s,w)$. Such sections of $\mathcal{B}(s,w)$ are called spin weighted functions.

\subsection{Spin structure and frame of oriented orthonormal frames}
We know by \cite{geroch1970spinor} that $\mathcal{M}$ admits a spin structure. In other words, if we denote by $\mathfrak{O}$ the $SO^+(1,3)$ principal bundle of oriented orthonormal frames on $\mathcal{M}$ (and $\pi_{\mathfrak{O}}$ is the associated projection), there exists a $SL(2, \C)$ principal bundle $\pi_{\mathfrak{S}}:\mathfrak{S} \rightarrow \mathcal{M}$ and a double covering $p: \mathfrak{S} \rightarrow \mathfrak{O}$ such that the following diagram commutes
\[
\begin{tikzcd}[column sep = 3em]
\mathfrak{S} \arrow{r}{p} \arrow[d, "\pi_{\mathfrak{S}}"] & \mathfrak{O} \arrow[ld, "\pi_{\mathfrak{O}}"] \\
\mathcal{M}
\end{tikzcd}
\]
and such that for all $g\in SL(2, \C)$ and $x\in \mathfrak{S}$, $p(x \cdot g) = p(x)\cdot \tilde{p}(g)$ where $\tilde{p}: SL(2, \C) \rightarrow SO^+(1, 3)$ is given by the Weyl representation
\[\tilde{p}: \begin{cases}
SL(2,\C) \rightarrow SO^+(1,3) \\
M \mapsto \left( x \in \R^4 \mapsto i_2^{-1} M\frac{1}{\sqrt{2}}\begin{pmatrix} x_0 + x_3 & x_1+ix_2 \\x_1-ix_2 & x_0-x_3 \end{pmatrix}M^*\right) 
\end{cases}
\]
where \[i_2:\begin{cases}\C^4 \rightarrow \mathcal{M}_2(\C) \\
z \mapsto \frac{1}{\sqrt{2}}\begin{pmatrix}z_0 + z_3 & z_1 + iz_2 \\ z_1 - iz_2 & z_0 - z_3 \end{pmatrix}.
\end{cases}
\]
We also identify $\C^2\otimes\overline{\C}^2$ and  $\mathcal{M}_2(\C)$ via the linear isomorphisms 
\[i_1:\begin{cases}\C^2 \otimes \overline{\C}^2 \rightarrow \mathcal{M}_2(\C) \\
a \otimes \overline{b} \mapsto a \overline{b}^T.
\end{cases}
\]
For later use, we define $i_0 := i_2^{-1}\circ i_1$ the identification between $\C^2\otimes\overline{\C^2}$ and $\C^4$.
Using the identification $i_0$, we have for every $g \in SL(2,\C)$: \begin{equation} \rho(g) \otimes \overline{\rho}(g) = \mu \circ \tilde{p}(g) \label{coresSpinVect}\end{equation} where $\rho$ is the canonical representation of $SL(2, \C)$ on $\C^2$ and $\mu$ is the canonical representation of $SO^+(1,3)$ on $\C^4$ (the identification $i_0$ is implicit in the equality).

\subsection{Spinor bundles}

\subsubsection{Vector bundle associated to a principal bundle}
Given a principal bundle $\pi:\mathcal{E}\rightarrow \mathcal{B}$ and a representation $\rho$ of the structure group $G$ on some (real or complex) vector space $V$, we can form the associated vector bundle $\mathcal{F}:=\mathcal{E}\times V / \sim$ where $(e, v)\sim (e', v')$ if there exists $g\in G$ such that $e\cdot g = e'$ and $\rho(g^{-1})(v) = v'$. The vector space structure is given on each fiber by $\lambda[(e,v)] = [(e, \lambda v)]$ and $[(e,v)] + [(e,v')] = [(e,v+v')]$ which does not depend on the choice of $(e,v)$ in the class. If $\phi: \pi^{-1}U \rightarrow U\times G$ is a local trivialization (in the sense of principal bundles) of $\mathcal{E}$, then $\phi_{\mathcal{F}}:x \in \mathcal{F} \mapsto ((\pi(x),z) \text{ such that }[(\phi^{-1}( \pi(x),1),z)] = x)$ is a local trivialization of $\mathcal{F}$. In these notes, $\phi_\mathcal{F}$ will be called the trivialization associated to $\phi$. 
If we apply the previous construction to the bundle of oriented orthonormal frames with the canonical representation of $SO^+(1,3)$ on $\C^4$, we obtain the complexified tangent space on $\mathcal{M}$.

\begin{remark}
If $A$ and $B$ are complex vector bundles associated to $\mathcal{E}$ for the action $\rho_A$ and $\rho_B$, then $A\otimes B$ is naturally isomorphic to the bundle associated to $\mathcal{E}$ for the action $\rho: g \mapsto \rho_A(g)\otimes\rho_B(g)$ and $\overline{A}$ is naturally isomorphic to the bundle associated to $\mathcal{E}$ for the action $\overline{\rho}_A$
\end{remark}

\subsubsection{Vector bundle associated with the spin structure}
Applying the previous construction to the spin structure and the canonical representation of $SL(2,\C)$, we get a complex vector bundle of rank 2 over $\mathcal{M}$ called the spinor bundle $\mathcal{S}$. If we chose a local trivialization $\Phi$ of  the spin structure, we deduce an associated local trivialization $\Phi_\mathcal{S}$ of $\mathcal{S}$. 
\begin{prop}
Given a local trivialization $\Phi: \pi_{\mathfrak{S}}^{-1}(U)\rightarrow U\times SL(2,\C)$ of $\mathfrak{S}$, there exists a unique local trivialization (on the same open set) $\Psi: \pi_{\mathfrak{O}}^{-1}(U)\rightarrow U\times SO^+(1,3)$ of $\mathfrak{O}$ such that $\Psi \circ p \circ \Phi^{-1} = Id\times\tilde{p}$. We say that $\Phi$ and $\Psi$ are compatible.
\end{prop}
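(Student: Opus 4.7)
The strategy is to convert the given trivialization into a local section of $\mathfrak{S}$, push it down through $p$ to obtain a local section of $\mathfrak{O}$, and let $\Psi$ be the principal-bundle trivialization canonically associated to that section. More precisely, recall that trivializations of a principal bundle over $U$ are in bijection with smooth sections over $U$: from $\Phi$ I get the section $s: U \to \mathfrak{S}$ defined by $s(x) = \Phi^{-1}(x, e)$, where $e$ is the identity of $SL(2,\C)$. I then define
\[
t := p \circ s : U \to \mathfrak{O},
\]
which is smooth as a composition of smooth maps and is a section because $\pi_{\mathfrak{O}} \circ p = \pi_{\mathfrak{S}}$ by commutativity of the diagram. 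Finally I set $\Psi(y) := (\pi_{\mathfrak{O}}(y), h)$, where $h \in SO^+(1,3)$ is the unique element such that $y = t(\pi_{\mathfrak{O}}(y)) \cdot h$ (existence and uniqueness of $h$ follow from the fact that the right action of $SO^+(1,3)$ on each fiber of $\mathfrak{O}$ is free and transitive). One checks in the usual way that $\Psi$ is a smooth diffeomorphism onto $U \times SO^+(1,3)$ and is equivariant for the right $SO^+(1,3)$-action, so it is a trivialization of principal bundles.

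The compatibility identity is then a one-line computation using the equivariance of $p$. For $(x, g) \in U \times SL(2,\C)$,
\[
p(\Phi^{-1}(x, g)) = p(s(x) \cdot g) = p(s(x)) \cdot \tilde{p}(g) = t(x) \cdot \tilde{p}(g),
\]
and applying $\Psi$ gives $(x, \tilde{p}(g))$, i.e.\ $\Psi \circ p \circ \Phi^{-1} = \mathrm{Id} \times \tilde{p}$, as required.

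For uniqueness, suppose $\Psi'$ is another trivialization satisfying the same identity. Taking $g = e$ forces $\Psi'(t(x)) = \Psi'(p(s(x))) = (x, \tilde{p}(e)) = (x, e)$ for every $x \in U$, so $\Psi'$ and $\Psi$ agree on the image of $t$. Since both $\Psi$ and $\Psi'$ are $SO^+(1,3)$-equivariant and every point of $\pi_{\mathfrak{O}}^{-1}(U)$ is of the form $t(x) \cdot h$ for a unique $h$, equivariance propagates the agreement to the whole fiber, so $\Psi' = \Psi$.

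There is no real obstacle here beyond bookkeeping: the construction is essentially forced once one notices that a trivialization is the same data as a section, and all the work is done by the hypothesis $p(x \cdot g) = p(x) \cdot \tilde{p}(g)$ together with the freeness of the $SO^+(1,3)$-action on the fibers of $\mathfrak{O}$.
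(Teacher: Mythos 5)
Your proof is correct, but it takes a genuinely different route from the proof given in the paper. The paper works directly with the necessary identity $\Psi \circ p = (\mathrm{Id}\times\tilde{p})\circ\Phi$: it proves uniqueness from the surjectivity of $p$, and proves existence by checking that $(\mathrm{Id}\times\tilde{p})\circ\Phi$ descends through $p$ (i.e.\ that $p(x)=p(y)$ implies equality of the images), then verifies by hand that the resulting $\Psi$ is a diffeomorphism and equivariant. You instead exploit the bijection between trivializations and local sections: turn $\Phi$ into the section $s(x) = \Phi^{-1}(x,e)$, push it down to $t = p\circ s$, and let $\Psi$ be the trivialization canonically attached to $t$. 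This is precisely the alternative the author flags in the remark immediately following the proposition (``Two local sections $s_1$ of $\mathfrak{S}$ and $s_2$ of $\mathfrak{O}$ define compatible local trivializations if and only if $s_2 = p\circ s_1$. This fact could have been used to prove the proposition as well.''). The payoff of your route is that smoothness and bijectivity of $\Psi$ come for free from the standard section-to-trivialization correspondence, so you avoid the paper's explicit checks that the descended map is a local diffeomorphism; the cost is that you invoke that correspondence as a known fact rather than building $\Psi$ from scratch. Your uniqueness argument (agreement on the image of $t$ plus equivariance) is in substance the same as the paper's appeal to surjectivity of $p$.
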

\begin{remark}[Notation]
In these notes, if we have a Cartesian product $A\times B$ we denote by $pr_A$ the projection on $A$ and $pr_B$ the projection on $B$.
\end{remark}
\begin{proof}
A necessary condition on $\Psi$ is $\Psi \circ p = (Id \times \tilde{p})\circ \Phi$.
Therefore, the uniqueness is a direct consequence of the surjectivity of $p: \pi_{\mathfrak{S}}^{-1}(U)\rightarrow \pi_{\mathfrak{O}}^{-1}(U)$.
To construct $\Psi$, we have to show that if $p(x) = p(y)$, then $(Id \times \tilde{p})\circ \Phi (x) = (Id \times \tilde{p})\circ \Phi (y)$. Assume $x,y\in \mathfrak{S}$ are such that $p(x) = p(y)$. In particular $\pi_{\mathfrak{S}}(x) = \pi_{\mathfrak{S}}(y)$. By definition of a principal $SL(2,\C)$ bundle, there exists $g \in SL(2, \C)$ such that $y = x\cdot g$. By definition of $p$, $p(x) = p(x\cdot g) = p(x)\cdot\tilde{p}(g)$ and $\tilde{p}(g) = 1$ (the action of $SO^+(1,3)$ on each fiber of $\mathfrak{O}$ is free). Moreover, because $\Phi$ is a trivialization, $\Phi(x\cdot g) = (\pi_\mathfrak{S}(y), pr_{SL(2, \C)}(\Phi(y)) g)$. Now, \[(Id \times \tilde{p})\circ \Phi(x\cdot g) = (\pi_\mathfrak{S}(x), \tilde{p}(pr_{SL(2, \C)}(\Phi(x))) \tilde{p}(g)) = (Id \times \tilde{p})\circ \Phi(x).\]
So $\Psi$ is well defined. Moreover, using that $(Id \times \tilde{p})\circ \Phi$ is surjective and $(Id \times \tilde{p})\circ \Phi(x) = (Id \times \tilde{p})\circ \Phi(y)$ if and only if $p(x) = p(y)$ (the only if part comes from the fact that $(Id\times\tilde{p})\circ \Phi(x)$ and $p$ have exactly two antecedents), $\Psi$ is bijective. Since $p$ is a smooth covering map and $\Psi\circ p$ is a local diffeomorphism, we obtain that $\Psi$ is a local diffeomorphism so it is a diffeomorphism. We also have that for $g\in SO^+(1,3)$, $\Psi(x\cdot g) = (\pi_\mathfrak{O}(x), pr_{SO^+(1,3)}(\Psi(x))g)$. As a consequence $\Psi$ is a local trivialization of $\mathfrak{O}$.
\end{proof}
\begin{remark}
A local trivialization of a principal bundle can be defined by a local smooth section of the bundle (we ask that this section is the constant map equal to the neutral element of the group when written in the local trivialization). Two local sections $s_1$ of $\mathfrak{S}$ and $s_2$ of $\mathfrak{O}$ define compatible local trivializations if and only if $s_2 = p\circ s_1$. This fact could have been used to prove the proposition as well.
\end{remark}
\begin{remark}
\label{compatibleReverse}
It is not true in general that for any local trivialization $\Psi$ of $\mathfrak{O}$, there exists a compatible local trivialization of $\mathfrak{S}$. However it is true locally (there exists exactly 2 such compatible local trivializations corresponding to the two local lifts of the section $x \mapsto \Psi^{-1}(x,1)$ through $p$ ). 
\end{remark}
\begin{remark}
\label{changeOfMap}
If $\Phi\circ\Phi'^{-1}(x, g) = (x, gf(x))$ is a change of local trivialization of $\mathcal{E}$ (with $f: U\rightarrow G$ a smooth map), then $\Phi_{\mathcal{F}}\circ\Phi_{\mathcal{F}}'^{-1}(x, v) = (x, \rho(f(x))(v))$.
\end{remark}

Two compatible trivializations enable to reduce locally the picture of spinors and vectors on $\mathcal{M}$ to the picture of spinors and vectors on Minkowski space. In particular, we have an analog on $\mathcal{M}$ of the previously defined identification between $\C^2\otimes \overline{\C^2}$ and $\C^4$:
\begin{prop}
There exists a unique isomorphism $j$ of complex vector bundles between $\mathcal{S}\otimes \overline{\mathcal{S}}$ and $T_{\C}\mathcal{M}$ such that, given any couple of compatible local trivializations $(\Phi,\Psi)$ on $U$, $\Psi_{\mathcal{T_\C\mathcal{M}}}\circ j\circ \Phi_{\mathcal{S}\otimes\overline{\mathcal{S}}}^{-1}$ is exactly $(Id_U, i_0)$ where $i_0$ is the identification between $\C^2\otimes\overline{\C}^2$ and $\C^4$
\end{prop}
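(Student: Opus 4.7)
The plan is to build $j$ by gluing together local isomorphisms obtained from compatible trivializations, using equation (\ref{coresSpinVect}) as the key algebraic identity that makes the gluing well-defined.

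First I would address uniqueness, which is immediate: since every point of $\mathcal{M}$ lies in the domain of some local trivialization $\Phi$ of $\mathfrak{S}$ (and hence, by the proposition just proved, has a compatible $\Psi$), the prescribed formula $\Psi_{T_\C\mathcal{M}} \circ j \circ \Phi_{\mathcal{S}\otimes\overline{\mathcal{S}}}^{-1} = (Id_U, i_0)$ determines $j$ pointwise as $j|_U = \Psi_{T_\C\mathcal{M}}^{-1} \circ (Id_U, i_0) \circ \Phi_{\mathcal{S}\otimes\overline{\mathcal{S}}}$. This same formula will serve as the definition.

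For existence, cover $\mathcal{M}$ by open sets $\{U_\alpha\}$ on each of which a local trivialization $\Phi_\alpha$ of $\mathfrak{S}$ exists (hence a compatible $\Psi_\alpha$ of $\mathfrak{O}$), and define $j_\alpha : (\mathcal{S}\otimes\overline{\mathcal{S}})|_{U_\alpha} \to T_\C\mathcal{M}|_{U_\alpha}$ by the formula above. Each $j_\alpha$ is a smooth $\C$-linear bundle isomorphism because $i_0$ is a linear isomorphism and the associated trivializations are smooth bundle isomorphisms. The crucial step is to show that $j_\alpha = j_\beta$ on $U_\alpha \cap U_\beta$. Write the change of trivialization as $\Phi_\alpha \circ \Phi_\beta^{-1}(x,g) = (x, g f(x))$ with $f: U_\alpha\cap U_\beta \to SL(2,\C)$ smooth; by compatibility of each pair with $\tilde p$, the induced change for $\mathfrak{O}$ is $\Psi_\alpha \circ \Psi_\beta^{-1}(x,h) = (x, h \tilde p(f(x)))$. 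By Remark \ref{changeOfMap}, the induced changes on the associated bundles are multiplication by $\rho(f(x)) \otimes \overline{\rho}(f(x))$ on $\mathcal{S}\otimes\overline{\mathcal{S}}$ and by $\mu(\tilde p(f(x)))$ on $T_\C\mathcal{M}$. Equation (\ref{coresSpinVect}) gives exactly $\rho(f(x))\otimes\overline{\rho}(f(x)) = \mu(\tilde p(f(x)))$ (via $i_0$), so the two local definitions of $j$ agree on overlaps, and they glue to a global smooth bundle isomorphism $j$.

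The hard part, conceptually, is recognizing that Remark \ref{compatibleReverse} forces us to start with trivializations of $\mathfrak{S}$ rather than of $\mathfrak{O}$, since not every local trivialization of $\mathfrak{O}$ admits a compatible spin lift; technically, the only nontrivial verification is the cocycle compatibility, which is pure algebra once (\ref{coresSpinVect}) is in hand. The $\C$-linearity and smoothness of the resulting $j$ are inherited from each $j_\alpha$, and bijectivity on each fiber follows from $i_0$ being a linear isomorphism, so $j$ is indeed an isomorphism of complex vector bundles.
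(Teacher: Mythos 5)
Your proof is correct and follows essentially the same route as the paper's: uniqueness from the fact that compatible trivializations cover $\mathcal{M}$, existence by local definition via $\Psi_{T_\C\mathcal{M}}^{-1}\circ(Id\times i_0)\circ\Phi_{\mathcal{S}\otimes\overline{\mathcal{S}}}$, and agreement on overlaps by combining the fact that compatibility forces the $\mathfrak{O}$-transition function to equal $\tilde p$ composed with the $\mathfrak{S}$-transition function together with remark \ref{changeOfMap} and equation \eqref{coresSpinVect}. The only difference is cosmetic: the paper writes out the composite $\Psi_{T_\C\mathcal{M}}\circ\Psi'^{-1}_{T_\C\mathcal{M}}\circ(Id\times i_0)\circ\Phi'_{\mathcal{S}\otimes\overline{\mathcal{S}}}\circ\Phi^{-1}_{\mathcal{S}\otimes\overline{\mathcal{S}}}$ explicitly and shows it equals $(Id\times i_0)$, whereas you state the equality of the two transition maps directly, which is the same verification.
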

\begin{proof}
The uniqueness is obvious since compatible local trivializations cover $\mathcal{M}$. For the existence, we define $j$ locally on each open set $U$ associated to a compatible local trivialization and we have to check that all the definitions agree when they overlap. Let $(\Phi, \Psi)$ and $(\Phi', \Psi')$ be two couples of compatible local trivializations on the same open set $U$. Let $g_1: U \rightarrow SL(2,\C)$ smooth be such that $\Phi \circ \Phi'^{-1}(x, g) = (x, gg_1(x))$) and $g_2: U \rightarrow SO^+(1,3)$ smooth be such that $\Psi \circ \Psi'^{-1}(x, g) = (x,gg_2(x))$. Using the compatibility (therefore $p = \Psi^{-1} \circ Id\times \tilde{p}\circ \Phi = \Psi'^{-1}\circ Id\times\tilde{p}\circ \Phi'$), we get for all $x\in U$ and all $g\in SL(2, \C)$ $(x,\tilde{p}(g)) = (x, \tilde{p}(gg_1(x))g_2(x)^{-1})$  and we deduce $\tilde{p}\circ g_1 = g_2$. Then $\Psi_{\mathcal{T_\C\mathcal{M}}}\circ\Psi'^{-1}_{\mathcal{T_\C\mathcal{M}}}\circ(Id \times i_0)\circ\Phi'_{\mathcal{S}\otimes\overline{\mathcal{S}}}\circ\Phi^{-1}_{\mathcal{S}\otimes\overline{\mathcal{S}}}(x,v) = \left(x, \mu(\tilde{p}(g_1(x)))(i_0(\rho(g_1(x)^{-1})\otimes \overline{\rho}(g_1(x)^{-1})(v)))\right)$ (where $\rho$ and $\mu$ are the same as in \eqref{coresSpinVect} and we used remark \ref{changeOfMap} to compute the changes of associated trivialization). But \eqref{coresSpinVect} states exactly that $\mu(\tilde{p}(g_1(x)))\circ i_0 \circ(\rho(g_1(x)^{-1})\otimes \overline{\rho}(g_1(x)^{-1})) = i_0$. Therefore $\Psi_{\mathcal{T_\C\mathcal{M}}}\circ\Psi'^{-1}_{\mathcal{T_\C\mathcal{M}}}\circ(Id \times i_0)\circ\Phi'_{\mathcal{S}\otimes\overline{\mathcal{S}}}\circ\Phi^{-1}_{\mathcal{S}\otimes\overline{\mathcal{S}}}(x,v) = (Id \times i_0)(x, v)$ and the proposition is proved.
\end{proof}
\begin{remark}
In the following, we identify $\mathcal{S}\otimes \overline{\mathcal{S}}$ and $T_\C\mathcal{M}$ using the map $j$ implicitly. Note that if $m = j(a\otimes \overline{b}) \in T_\C\mathcal{M}$, then $\overline{m} = j(b\otimes \overline{a})$. This comes from the analogous property on the map $i_0$.
\end{remark}

\subsubsection{Symplectic form on spinors}
\begin{prop}
\label{defEpsilon}
There exists a unique symplectic form $\epsilon \in \mathcal{S}'\wedge\mathcal{S}'$ such that for all local trivializations $\Phi$ of $\mathfrak{S}$, we have $\epsilon_x (\Phi_{\mathcal{S}}^{-1} (x,v), \Phi_{\mathcal{S}}^{-1} (x,w)) = \det (v,w)$.
\end{prop}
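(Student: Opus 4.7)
The plan is to construct $\epsilon$ locally from the determinant on $\C^2$ using a local trivialization of the spin structure, and then check that the definitions agree on overlaps. Uniqueness is immediate: since local trivializations of $\mathfrak{S}$ cover $\mathcal{M}$, the prescribed values determine $\epsilon_x$ on an arbitrary basis of $\mathcal{S}_x$ at every point $x$, so two candidates must coincide.

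For existence, given a local trivialization $\Phi:\pi_{\mathfrak{S}}^{-1}(U)\to U\times SL(2,\C)$, I would define on $U$ the bilinear form
\[
\epsilon^{\Phi}_x(\xi_1,\xi_2) := \det\bigl(pr_{\C^2}\Phi_{\mathcal{S}}(\xi_1),\, pr_{\C^2}\Phi_{\mathcal{S}}(\xi_2)\bigr).
\]
This is antisymmetric and non-degenerate on each fiber (inherited from $\det$ on $\C^2$) and smooth in $x$ (since $\Phi_{\mathcal{S}}$ is a smooth local trivialization). Hence $\epsilon^{\Phi}$ is a smooth section of $\mathcal{S}'\wedge\mathcal{S}'$ over $U$.

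The key step is independence from the trivialization. If $\Phi,\Phi'$ are two local trivializations on the same open set with $\Phi\circ\Phi'^{-1}(x,g) = (x,g g_1(x))$ for some smooth $g_1: U \to SL(2,\C)$, then Remark \ref{changeOfMap} gives $\Phi_{\mathcal{S}}\circ\Phi'^{-1}_{\mathcal{S}}(x,v) = (x,\rho(g_1(x))v)$ where $\rho$ is the canonical representation on $\C^2$. Consequently, for $\xi_1,\xi_2\in \mathcal{S}_x$,
\[
\epsilon^{\Phi}_x(\xi_1,\xi_2) = \det\bigl(\rho(g_1(x))v_1,\rho(g_1(x))v_2\bigr) = \det(g_1(x))\,\det(v_1,v_2) = \det(v_1,v_2) = \epsilon^{\Phi'}_x(\xi_1,\xi_2),
\]
where $v_i = pr_{\C^2}\Phi'_{\mathcal{S}}(\xi_i)$ and we used $g_1(x)\in SL(2,\C)$. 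This is the crucial input: the whole construction hinges on the fact that the structure group of $\mathfrak{S}$ is exactly the stabilizer of the standard determinant form on $\C^2$.

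The local forms $\epsilon^{\Phi}$ therefore glue into a globally defined smooth section $\epsilon$ of $\mathcal{S}'\wedge\mathcal{S}'$ satisfying the stated normalization. I do not expect any real obstacle beyond carefully tracking the change-of-trivialization formula; the rest is a direct transcription of standard properties of $\det$ on $\C^2$.
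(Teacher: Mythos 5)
Your proof is correct and follows essentially the same route as the paper: define $\epsilon$ locally via a trivialization, then verify independence on overlaps using the invariance of $\det$ on $\C^2$ under $SL(2,\C)$. The only cosmetic difference is that the paper phrases the overlap check by transporting the arguments of the determinant with $\rho(g(x)^{-1})$, whereas you compare $\epsilon^{\Phi}$ to $\epsilon^{\Phi'}$ directly via Remark~\ref{changeOfMap}; these are the same computation.
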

\begin{proof}
The uniqueness is obvious. To prove the existence, we define $\epsilon$ locally and check that all the definitions agree. Let $\Phi$ and $\Phi'$ be two local trivializations of $\mathfrak{S}$ on some open set $U$. We denote by $g: U \rightarrow SL(2,\C)$ the smooth map such that $\Phi\circ\Phi'^{-1} (x, h) = (x, hg(x))$. Then $\det( \rho(g(x)^{-1})(v), \rho(g(x)^{-1})(w)) = \det(g(x)^{-1}(v,w)) = \det(g(x)^{-1})\det(v,w)=\det(v,w)$. The proposition is proved.
\end{proof}

\begin{prop}
\label{relEpsilonG}
We have the following equality for all $x\in \mathcal{M}$, for all $a, c \in \mathcal{S}_x$ and $\overline{b},\overline{d}\in \overline{\mathcal{S}}_x$:
\[
g(a\otimes \overline{b}, c\otimes \overline{d}) = \epsilon(a,c)\overline{\epsilon(b,d)}
\]
\end{prop}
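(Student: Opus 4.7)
The plan is to reduce the identity to a statement in the standard model $\C^2\otimes\overline{\C^2}\cong \C^4$ with Minkowski metric $\eta$, using a compatible pair of local trivializations, and then establish that model identity by a short algebraic computation.

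First I would fix a point $x\in \mathcal{M}$ and choose compatible local trivializations $(\Phi,\Psi)$ of $\mathfrak{S}$ and $\mathfrak{O}$ around $x$. In these trivializations, three things become the standard objects on the fibre: by the characterizing property of $j$, the isomorphism $\mathcal{S}\otimes\overline{\mathcal{S}}\to T_\C\mathcal{M}$ is read as $i_0$; by proposition \ref{defEpsilon}, the symplectic form $\epsilon$ is read as the determinant on $\C^2$; and because $\Psi$ is a trivialization of the bundle of oriented orthonormal frames, the metric $g$ is read as $\eta$. Writing $a,b,c,d$ for the spinor coordinates in $\Phi_{\mathcal{S}}$, the desired equality reduces to the purely algebraic statement
\[\eta\bigl(i_0(a\otimes\overline{b}),\,i_0(c\otimes\overline{d})\bigr)=\det(a,c)\,\overline{\det(b,d)}\qquad(a,b,c,d\in\C^2).\]
Since both sides are independent of the choice of compatible trivialization (the right-hand side via $SL(2,\C)$-invariance of the determinant, the left-hand side by tensoriality), proving this local version suffices.

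To prove the model identity I would use the scalar invariant $\det\circ i_2$. A direct expansion of $i_2(z)$ gives $\det(i_2(z))=\tfrac12\eta(z,z)$, so by polarization
\[\eta(z,w)=\det(i_2(z+w))-\det(i_2(z))-\det(i_2(w)).\]
Applying this to $z=i_0(a\otimes\overline{b})$ and $w=i_0(c\otimes\overline{d})$, one has $i_2(z)=a\overline{b}^{\,T}$ and $i_2(w)=c\overline{d}^{\,T}$, both rank-one matrices of zero determinant. What remains is to compute $\det(a\overline{b}^{\,T}+c\overline{d}^{\,T})$ by expanding the four entries. The diagonal-diagonal and anti-diagonal-anti-diagonal "pure" terms cancel, and regrouping the mixed terms factors the result as $(a_0c_1-a_1c_0)\overline{(b_0d_1-b_1d_0)}=\det(a,c)\overline{\det(b,d)}$.

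I do not expect any genuine obstacle: the content is a routine $2\times 2$ algebraic identity once the geometric objects have been translated into the standard model. The only care required is to invoke the compatibility of $(\Phi,\Psi)$ correctly so that $j$, $\epsilon$ and $g$ simultaneously take their model forms, which is exactly what the preceding propositions guarantee.
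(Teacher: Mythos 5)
Your proposal is correct and follows essentially the same route as the paper: reduce the statement to the model identity $\eta(i_0(a\otimes\overline{b}), i_0(c\otimes\overline{d}))=\det(a,c)\overline{\det(b,d)}$ by passing to a pair of compatible trivializations, and then verify the algebraic identity directly. The paper leaves the final computation to the reader, whereas you make it crisp via the observation $\det(i_2(z))=\tfrac12\eta(z,z)$, polarization, and the vanishing of rank-one determinants — a nice way to organize the bookkeeping, but not a different proof.
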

\begin{proof}
We fix a pair of compatible trivializations $(\Phi, \Psi)$. The equality to prove in the associated trivializations is
\[
\forall a,c \in \C^2, \forall \overline{b},\overline{d} \in \overline{\C}^2, \eta (i_0(a \otimes \overline{b}), i_0(c\otimes \overline{d}))= \det(a,c)\overline{\det(b,d)}
\]
where $\eta = \begin{pmatrix} 1 & 0 & 0 & 0\\ 0 & -1 & 0 & 0 \\ 0& 0 & -1 & 0\\ 0 & 0 & 0 & -1\end{pmatrix}$ is the Minkowski metric on $\C^4$. We check that this equality is true using the explicit definition of $i_0$.
\end{proof}

\subsection{Bundle of normalized spin frame along the null directions}
From now on, we assume that $\mathcal{M}$ is a Petrov-type $D$ (see the definition below) Ricci-flat spacetime.
\begin{definition} \label{principalNullVector}
A null vector $l\in T_x\mathcal{M}$ is called principal if for all $a,b \in T_x\mathcal{M}$ such that $g(l,a) = g(l,b)=0$, we have $W(l,a,l,b)= 0$ where $W$ is the Weyl tensor. We say that the vector $l$ is principal of multiplicity at least $2$ if for all $a,b \in T_x\mathcal{M}$ such that $g(l,a)= 0$, $W(l,a,l,b) =0$. This characterization of principal vectors can be found in \cite{o2014geometry}, proposition 5.5.5. 
Alternative equivalent definitions and additional properties of principal null directions are also provided in \cite{o2014geometry}.
 \end{definition}

\begin{definition}
We define a Petrov type D space-time as a space-time $\mathcal{M}$ such that for all $x_0\in \mathcal{M}$, there exists $l$ and $n$ independent (therefore non vanishing) null vector fields defined on a neighborhood $U$ of $x_0$ such that:
\begin{itemize}
\item For all $x\in U$, the set of principal vector fields at $T_x\mathcal{M}$ is exactly $\R l(x) \cup \R n(x)$
\item For all $x\in U$, $l(x)$ and $n(x)$ are principal null vectors of multiplicity (at least) $2$.
(in fact $l$ and $n$ are of multiplicity exactly 2, see \cite{o2014geometry}, chapter 5)
\end{itemize}
\end{definition}

The property of principal null directions on Petrov-type D Ricci-flat space-time that we need here is the following:
\begin{prop}
If we define $n$ and $l$ as in the previous definition, they are pregeodesic and shear-free. The shear of a pregeodesic vector field $l$ at $x_0$ with respect to $X,Y$ orthonormal family of $l(x_0)^\perp$ is defined as $\frac{1}{2}\left(g(\nabla_Y l, Y)-g(\nabla_X l, X)\right)+\frac{i}{2}\left(g(\nabla_Y l, X)+g(\nabla_X l, Y)\right)$ (see definition 5.7.1 in \cite{o2014geometry})
\end{prop}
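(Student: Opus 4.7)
The plan is to invoke the Goldberg--Sachs theorem, which is the classical result stating exactly that a repeated principal null direction of the Weyl tensor in a Ricci-flat spacetime is geodesic and shear-free. Since the paper's stated aim is to synthesize background rather than produce new results, the cleanest route is to reduce the statement to this theorem and point to a reference; if one wants a self-contained argument, the natural language is that of two-component spinors already set up in the preceding sections.

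The first step is to rewrite the algebraic condition on $l$ in spinor language. Using the identification $j:\mathcal S\otimes\overline{\mathcal S}\simeq T_\C\mathcal M$, a real null vector has the form $l=j(o\otimes\bar o)$ for some $o\in\mathcal S_x$ (unique up to a $U(1)$ phase absorbed into $o$). The Weyl tensor, being Ricci-flat, corresponds under the spin isomorphism to a totally symmetric Weyl spinor $\Psi_{ABCD}$, and Definition \ref{principalNullVector} translates (via Proposition \ref{relEpsilonG}) into: $l$ is principal iff $\Psi_{ABCD}o^Ao^Bo^Co^D=0$, and principal of multiplicity at least $2$ iff $\Psi_{ABCD}o^Ao^Bo^C=0$. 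Similarly, pregeodesy and shear-freeness of the null congruence $l$ translate into the single spinorial condition $o^Ao^B\,\nabla_{AA'}o_B=0$ (the shear and the failure of geodesy being the two independent projections of $o^B\nabla_{AA'}o_B$ along $o_A$ and transverse to $o_A$, after contracting with $o^A$).

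The second step is to differentiate the multiplicity condition and use the vacuum Bianchi identity. Differentiating $\Psi_{ABCD}o^Ao^Bo^Co^D=0$ in a direction, one gets terms involving $\nabla\Psi$ and terms of the form $\Psi_{ABCD}o^Ao^Bo^C\,\nabla o^D$; the latter vanish identically by the multiplicity-two assumption. Contracting the resulting identity with appropriate powers of $o$ and using the vacuum Bianchi identity $\nabla^{AA'}\Psi_{ABCD}=0$ yields a relation which, after separating the independent components using the basis $(o,\iota)$ of $\mathcal S_x$ (where $\iota$ is any spinor with $\epsilon(o,\iota)\ne 0$), forces $o^Ao^B\nabla_{AA'}o_B=0$. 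This is the standard spinor proof of Goldberg--Sachs and can be found, e.g., in Chandrasekhar or Penrose--Rindler.

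The main obstacle is purely notational bookkeeping: the algebraic manipulations above require setting up the symmetric Weyl spinor, the $\epsilon$-lowering conventions, and the decomposition of $\nabla_{AA'}o_B$ along $o_B$ and $\iota_B$, none of which have been introduced in the excerpt so far. For the purposes of these notes, the cleanest presentation is to state the spinorial reformulation carefully, note that $\Psi_{ABCD}o^Ao^Bo^C=0$ is the algebraic input and $o^Ao^B\nabla_{AA'}o_B=0$ is the desired conclusion, and then cite Goldberg--Sachs (for instance \cite{penrose1984spinors} or \cite{o2014geometry}) for the implication between the two, rather than reproduce the computation in full.
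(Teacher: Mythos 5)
The paper's own proof is a one-line citation to Proposition 5.9.2 of O'Neill (\cite{o2014geometry}), which states exactly this and proves it tensorially as the relevant half of the Goldberg--Sachs theorem specialized to type~D vacuum metrics. Your proposal lands in the same place (invoke Goldberg--Sachs, then cite), so at the level of what would actually be written the two proofs coincide; what you add is a sketch of the spinorial route, which is a genuinely different way to organize the argument. That sketch has one misstep worth flagging. Differentiating $\Psi_{ABCD}o^Ao^Bo^Co^D=0$ only produces $(\nabla_{EE'}\Psi_{ABCD})\,o^Ao^Bo^Co^D=0$, which is true but inert: the $\Psi o^3\,\nabla o$ term you mention vanishes trivially, and the Bianchi identity never enters. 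The useful identity comes from differentiating $\Psi_{ABCD}o^Bo^Co^D=0$ with the derivative index contracted against the remaining Weyl-spinor index,
\[
0 = \nabla^{AA'}\!\left(\Psi_{ABCD}\,o^Bo^Co^D\right)
  = \left(\nabla^{AA'}\Psi_{ABCD}\right)o^Bo^Co^D + 3\,\Psi_{ABCD}\,o^Bo^C\,\nabla^{AA'}o^D,
\]
where the first term dies by the vacuum Bianchi identity; since for type~D one has $\Psi_{ABCD}o^Bo^C=\Psi_2\,o_Ao_D$ with $\Psi_2\neq 0$, the second term forces $o_Ao_D\,\nabla^{AA'}o^D=0$, which is precisely the geodesic--shear-free condition. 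With that correction your spinorial sketch is complete, and is arguably more transparent than O'Neill's tensorial argument, at the cost of importing two-spinor index calculus that the paper deliberately does not set up; in the end your recommendation to defer to a reference is exactly what the paper does.
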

\begin{proof}
See proposition 5.9.2 in \cite{o2014geometry}.
\end{proof}

We define the following subset of $\mathcal{S}\times_{\mathcal{M}}\mathcal{S}$:
\begin{align*}
\mathcal{A}:=&\cup_{x\in\mathcal{M}}\left\{(o,\iota)\in \mathcal{S}_x\times\mathcal{S}_x: j(o\otimes\overline{o}) \text{ and } j(\iota \otimes \overline{\iota}) \text{ are independent, future oriented}\right.\\
&\left.\text{along null principal directions and }\epsilon(o,\iota)=1\right\}
\end{align*}

The bundle $\mathcal{A}$ is naturally endowed with a canonical $\C^*$ right action:
\[ (o, \iota)\cdot z = (zo, z^{-1}\iota) \]
We also have a $\Z/4\Z$ right action given by the map (image of the generator of $\Z/4\Z$):
$(o, \iota) \mapsto (i \iota, i o)$

By combining the two actions (performing the action of $\C^*$ first) , we get a right action of $\C^*\rtimes_f \Z/4\Z$ (with $f : [1]_4 \mapsto (z\mapsto z^{-1})$) on $\mathcal{A}$. However, this action is not free and we can quotient by the stabilizer of any point in $\mathcal{A}$ which is the normal subgroup $H:=\left<(-1, [2]_4)\right>$ to get a free action of the group $G_{\mathcal{A}}:=(\C^*\rtimes_f \Z/4\Z )/ H$. Topologically, $\C^*\rtimes_f \Z/4\Z$ is simply $\C^*\times \Z/4\Z$ which is homeomorphic to four disjoint copies of $\C^*$. The quotient by $H$ identifies $\C^*\times\left\{[0]_4\right\}$ with $\C^*\times\left\{[2]_4\right\}$ and $\C^*\times\left\{[1]_4\right\}$ with $\C^*\times \left\{[3]_4\right\}$ so $G_{\mathcal{A}}$ is homeomorphic to two disjoint copies of $\C^*$ (more precisely the two connected components are $\C^*_0:=\left\{[(z, [0]_4)], z\in \C^*\right\}$ and $\C^*_1:=\left\{[(z, [1]_4)], z\in \C^*\right\}$, $\C^*_0$ being the connected component of the neutral element). Note that $\C^*_0$ is a normal subgroup of $G_{\mathcal{A}}$ isomorphic to $\C^*$.

\begin{prop}
\label{principalA}
The set $\mathcal{A}$ is a smooth submanifold of $\mathcal{S}\times_{\mathcal{M}}\mathcal{S}$. Moreover, $\mathcal{A}$ is a $G_{\mathcal{A}}$ principal bundle (with projection map $\pi_{\mathcal{A}} = \pi_{\mathcal{S}\times_{\mathcal{M}}\mathcal{S}}{}_{|_\mathcal{A}}$).
\end{prop}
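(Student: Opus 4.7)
The strategy is to produce smooth local sections $s : U \to \mathcal{A}$ on a cover of $\mathcal{M}$, and to use the maps $\Phi_U(x,g) = s(x) \cdot g$ simultaneously as local trivializations of the principal $G_{\mathcal{A}}$-bundle and as local parametrizations exhibiting $\mathcal{A}$ as an embedded submanifold of $\mathcal{S} \times_{\mathcal{M}} \mathcal{S}$. To build such a section near any $x_0 \in \mathcal{M}$, I use the Petrov-D hypothesis to get smooth future-oriented principal null vector fields $l, n$ on a neighborhood $U$ of $x_0$; rescaling along the principal lines I may assume $g(l,n) = 1$. Since $\mathrm{span}(l,n)$ is a smooth timelike rank-$2$ sub-bundle, its orthogonal complement is spacelike of rank $2$, and locally I pick an oriented smooth orthonormal frame $(e_2, e_3)$ of it; setting $e_0 = (l+n)/\sqrt{2}$ and $e_1 = (l-n)/\sqrt{2}$ gives a smooth oriented orthonormal frame $(e_0, e_1, e_2, e_3)$, i.e.\ a local section of $\mathfrak{O}$. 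By Remark \ref{compatibleReverse}, it lifts to a local section of $\mathfrak{S}$; unwinding the associated-bundle construction and the explicit definitions of $i_0$ and $\epsilon$ (Proposition \ref{defEpsilon}) shows that this lift corresponds to a pair $(o, \iota)$ with $\epsilon(o, \iota) = 1$, $j(o \otimes \overline{o}) = l$ and $j(\iota \otimes \overline{\iota}) = n$, hence a smooth section of $\mathcal{A}$ on $U$.

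Next I verify that the $G_{\mathcal{A}}$-action on $\mathcal{A}$ is both free and transitive on each fiber. Freeness is a direct case analysis on a representative $(z, [k]_4) \in \C^* \rtimes_f \Z/4\Z$: for $k$ odd the underlying action swaps the two principal null directions and so fixes no point of $\mathcal{A}$; for $k$ even the action only rescales the spinors and the stabilizer is exactly $H = \langle (-1, [2]_4) \rangle$, which is precisely the subgroup quotiented out in the definition of $G_{\mathcal{A}}$. For transitivity on $\mathcal{A}_x$, given $(o', \iota') \in \mathcal{A}_x$, up to applying the class of $(1, [1]_4)$ I may assume $j(o' \otimes \overline{o'})$ lies along the same principal null direction as $j(o \otimes \overline{o})$; this forces $o' = zo$ for some $z \in \C^*$, and similarly $\iota' = w\iota$, and the normalization $\epsilon(o', \iota') = 1 = \epsilon(o, \iota)$ then yields $w = z^{-1}$, i.e.\ $(o', \iota') = (o, \iota) \cdot [z, [0]_4]$.

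Finally, with a smooth local section $s$ on $U$, the map $\Phi_U : U \times G_{\mathcal{A}} \to \mathcal{S} \times_{\mathcal{M}} \mathcal{S}$, $\Phi_U(x, g) = s(x) \cdot g$, is smooth, and by the previous step is a bijection onto $\pi_{\mathcal{A}}^{-1}(U)$. The main obstacle, and the only non-formal step after the local section is in hand, is to show that $\Phi_U$ is a smooth embedding so that its image is a genuine submanifold. Since $\pi_{\mathcal{S} \times_{\mathcal{M}} \mathcal{S}} \circ \Phi_U = \mathrm{pr}_U$, any vector in $\ker d\Phi_U$ projects to zero in $TU$, so the question reduces to showing the orbit maps $G_{\mathcal{A}} \to \mathcal{S} \times_{\mathcal{M}} \mathcal{S}$, $g \mapsto s(x)\cdot g$, are immersions; this is the standard Lie-theoretic fact that a smooth free action has trivial stabilizer Lie algebras and hence injective infinitesimal orbit maps. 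A dimension count $\dim_\R (U \times G_{\mathcal{A}}) = 4 + 2 = 6$ then lets the constant-rank theorem conclude that $\Phi_U$ is an open embedding onto a $6$-dimensional submanifold of $\mathcal{S} \times_{\mathcal{M}} \mathcal{S}$; varying $U$ covers $\mathcal{A}$ and produces simultaneously the submanifold structure and the principal $G_{\mathcal{A}}$-bundle trivializations, the compatibility of the latter on overlaps being automatic since both sides come from the same right $G_{\mathcal{A}}$-action on $\mathcal{A}$.
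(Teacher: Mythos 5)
Your strategy is essentially the same as the paper's: construct a local smooth section $s:U\to\mathcal{A}$ adapted to the principal null directions (via a local adapted orthonormal tetrad lifted through the spin structure), and then use $\Phi_U(x,g)=s(x)\cdot g$ simultaneously as a parametrization exhibiting $\mathcal{A}$ as a submanifold and as a principal-bundle trivialization. Your verification that the $G_\mathcal{A}$-action is free and transitive on each fiber is also correct.

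There is, however, a genuine gap in the final step. You argue that since $\Phi_U$ is an injective immersion, the constant-rank theorem together with the dimension count $\dim_\R(U\times G_\mathcal{A})=6$ shows that $\Phi_U$ is an embedding and its image a $6$-dimensional submanifold of $\mathcal{S}\times_{\mathcal{M}}\mathcal{S}$. This inference is false in general: an injective immersion of constant rank need not be a topological embedding, and its image need not be a submanifold. The constant-rank theorem gives local normal forms around each point of the \emph{domain}, but cannot prevent the image from accumulating on itself. The standard counterexample is the orbit map of the free $\R$-action of irrational slope on the $2$-torus: it is an injective immersion of constant rank, yet its image is dense and is not an embedded submanifold; note in particular that freeness of the action does not save you, since that example is also free. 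What is missing is a properness (or local closedness) argument. The paper supplies exactly this: after passing to a compatible pair of trivializations it exhibits the fiberwise parametrization $\alpha:G_\mathcal{A}\to\C^2\times\C^2$ explicitly and notes it is a \emph{proper} injective immersion (as the parameter escapes to infinity in $G_\mathcal{A}$, one spinor component blows up, so preimages of compacts are compact), whence its image is a closed embedded submanifold. You need to add this, or some equivalent closedness argument for $\pi_\mathcal{A}^{-1}(U)$ inside $\pi_{\mathcal{S}\times_\mathcal{M}\mathcal{S}}^{-1}(U)$, to close the proof.

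A smaller slip: you set $e_1=(l-n)/\sqrt{2}$. With the conventions fixed by $i_2$, $i_0$ and $\tilde{p}$ in the paper (which pair the $0$ and $3$ coordinates), this labeling does not produce a lift $(o,\iota)$ with $j(o\otimes\overline{o})$ along $l$; the null combination must sit in position $3$, as in Definition~\ref{orientationOfTriad} and in the paper's choice of tetrad ($e_0\pm e_3$ along the principal directions). You should take $e_3=(l-n)/\sqrt{2}$ and put your spacelike complement frame in positions $1,2$.
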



\begin{proof}
Let $x\in \mathcal{M}$. There exists an open neighborhood $U$ of $x$ such that there exists an oriented orthonormal tetrad $(e_0,e_1, e_2, e_3)$ with $e_0+e_3$ and $e_0-e_3$ are future oriented along the null principal directions. This tetrad gives a local trivialization $\Psi$ of $\mathfrak{O}$. Then, even taking a smaller neighborhood, we can assume that there exists a local trivialization $\Phi$ of $\mathfrak{S}$ on $U$ such that $\Phi$ and $\Psi$ are compatible (see remark \ref{compatibleReverse}). The set $\tilde{\mathcal{A}}:=\Phi_{\mathcal{S}\times_{\mathcal{M}}\mathcal{S}} (\pi_{\mathcal{A}}^{-1}(U)) $ is given by $U\times A$ where $A\subset \C^2\times \C^2$ is defined as the set of $(o, \iota)\in \C^2\times\C^2$ such that:
\begin{align*}
\begin{cases}
 i_0(o\otimes \overline{o})&= (\lambda , 0, 0, \lambda)\\
i_0(\iota \otimes \overline{\iota}) &= (\mu, 0, 0, -\mu)\\
\end{cases}\\
\text{or} \\
\begin{cases}
 i_0(\iota\otimes \overline{\iota})&= (\lambda , 0, 0, \lambda)\\
i_0(o \otimes \overline{o}) &= (\mu, 0, 0, -\mu)\\
\end{cases}\\
\det(o, \iota) = 1
\end{align*}
where $\lambda$ and $\mu$ are real positive numbers. We find the following parametrization for $A$:
\[
\alpha:
\begin{cases}
G_{\mathcal{A}} \rightarrow A \\
[(z, [0]_4)] \mapsto \left(\begin{pmatrix}z \\ 0 \end{pmatrix}, \begin{pmatrix} 0 \\ z^{-1}\end{pmatrix}\right) \\
[(z, [1]_4)] \mapsto \left(\begin{pmatrix}0 \\ iz^{-1} \end{pmatrix}, \begin{pmatrix} iz \\ 0 \end{pmatrix}\right)
\end{cases}
\]
where for $k \in \left\{0,1,2,3\right\}$, $[(z, [k]_4)]$ is the class $(z, [k]_4)H$ in $G_{\mathcal{A}}$. This map is a proper injective immersion in $\C^2\times\C^2$ and its image is $A$. The fact that the image is included in $A$ can be checked directly with the definitions. The other inclusion is proved by solving the system defining $A$. For example, if $o = \begin{pmatrix}o_0\\o_1\end{pmatrix}$ and $\lambda \in (0,+\infty)$, the condition $i_0(o\otimes \overline{o}) = (\lambda, 0, 0, \lambda)$ rewrites (by definition of $i_0$): \[ \left(\frac{\sqrt{2}}{2}\left(\lvert o_0 \rvert^2+\lvert o_1\rvert^2\right), \sqrt{2}\Re(o_0\overline{o_1}), \sqrt{2}\Im(o_0\overline{o_1}), \frac{\sqrt{2}}{2}\left(\lvert o_0 \rvert^2-\lvert o_1\rvert^2\right)\right)=(\lambda, 0, 0, \lambda)\]
This equality holds if and only if $o_1 = 0$ and $\left\vert o_0\right\rvert^2 = 2\lambda$. The other cases are very similar.

This prove that $A$ is a submanifold of $\C^2\times\C^2$ and we deduce that $\mathcal{A}$ is a submanifold of $\mathcal{S}\times_{\mathcal{M}} \mathcal{S}$. Moreover, we check (using the definition of the action) that the maps $\Phi_{\mathcal{S}\times_{\mathcal{M}}\mathcal{S}}^{-1}\circ\left(Id_U\times\alpha\right)$ (defined around each point $x\in\mathcal{M}$) endow $\mathcal{A}$ with a structure of $G_{\mathcal{A}}$ principal bundle over $\mathcal{M}$.
\end{proof}

\subsection{Bundle of oriented Newman-Penrose null tetrads along principal null directions}

\begin{definition}
\label{orientationOfTriad}
Let $(l,n,m)\in T_{\C,x}\mathcal{M}^3$  be such that $(n,l,m,\overline{m})$ is a null basis of $T_{\C,x}\mathcal{M}$  with $l$ and $n$ real and future oriented. We say that $(n,l,m)$ is oriented if $(\frac{n+l}{\sqrt{2}}, \Re(m), -\Im(m), \frac{l-n}{\sqrt{2}})$ is oriented.
\end{definition}
We define the following subset of $T_\C\mathcal{M}^3$:
\begin{align*}
\mathcal{N}:=&\cup_{x\in\mathcal{M}}\left\{(l,n,m)\in \left(T_{\C,x}\mathcal{M}\setminus \left\{0\right\}\right)^4 / g(l,l)=g(n,n)=g(m,m)=0, l=\overline{l}, n=\overline{n},\right. \\
&\text{$l$ and $n$ are independent principal and future oriented}, g(l,m)=g(n,m)=0, g(m,\overline{m})= -1, g(l,n)=1,\\
& \left.\text{$(l,n,m)$ is oriented in the sense of definition \ref{orientationOfTriad}}  \vphantom{\left(T_{\C,x}\mathcal{M}\setminus \left\{0\right\}\right)^4}\right\}
\end{align*}
In particular for $(l,n,m)\in \mathcal{N}_x$, $(l,n,m,\overline{m})$ is a basis of $T_{\C,x}\mathcal{M}$.
The set $\mathcal{N}$ is endowed with a canonical $\C^*$ right action:
\[ (l,n,m)\cdot z = (|z|l, |z|^{-1}n, \frac{z}{|z|}m) \]
We also have a $\Z/2\Z$ right action defined by the involution: 
\[ (l,n,m) \mapsto (n,l,\overline{m})\]

Combining these two actions (performing the action of $\C^*$ first), we get a right action of $\C^*\rtimes_g \Z/2\Z$ on $\mathcal{N}$ where $g([1]_2)(z) = z^{-1}$.

\begin{prop}
\label{principalN}
$\mathcal{N}$ is a smooth submanifold of $T_{\C}\mathcal{M}^3$ and is a $\C^*\rtimes_g \Z/2\Z$ principal bundle (for the action previously defined) with projection map $\pi_{\mathcal{N}}:= (\pi_{T_{\C}\mathcal{M}^3})_{|_{\mathcal{N}}}$.
\end{prop}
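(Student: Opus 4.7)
The strategy closely parallels the proof of Proposition \ref{principalA}. Around any $x_0\in\mathcal{M}$ I choose an oriented orthonormal local tetrad $(e_0,e_1,e_2,e_3)$ on some open $U\ni x_0$ such that $e_0+e_3$ and $e_0-e_3$ lie along the two principal null directions and are future oriented; this is possible by the $1+3$ decomposition together with the definition of Petrov type $D$. The tetrad gives a local trivialization $\Psi$ of $\mathfrak{O}$, which induces (via the associated bundle construction, componentwise) a trivialization $\Psi_{T_\C\mathcal{M}^3}$ of $T_\C\mathcal{M}^3$. In this trivialization, $\pi_{\mathcal{N}}^{-1}(U)$ corresponds to $U\times N$ for some subset $N\subset(\C^4)^3$ cut out by the algebraic conditions defining $\mathcal{N}$.

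Next I would solve the system defining $N$ explicitly. The principal null direction condition forces $l=\lambda(e_0+e_3)/\sqrt 2$ and $n=\mu(e_0-e_3)/\sqrt 2$ (up to the $l\leftrightarrow n$ swap) with $\lambda,\mu>0$; the normalization $g(l,n)=1$ then imposes $\mu=\lambda^{-1}$. Orthogonality of $m$ to $l$ and $n$ yields $m=ae_1+be_2$, nullity forces $b=\pm ia$, and $g(m,\overline m)=-1$ forces $|a|^2=1/2$; a short signed determinant computation shows that Definition \ref{orientationOfTriad} selects $b=-ia$. Writing $a=e^{i\theta}/\sqrt 2$ and setting $z=\lambda e^{i\theta}$ (so that $|z|=\lambda$ and $z/|z|=e^{i\theta}$), one obtains a smooth proper injective parametrization
\[
\beta:\begin{cases}
\C^*\rtimes_g\Z/2\Z\rightarrow N\\
(z,[0]_2)\mapsto\left(\tfrac{|z|}{\sqrt 2}(e_0+e_3),\,\tfrac{|z|^{-1}}{\sqrt 2}(e_0-e_3),\,\tfrac{z/|z|}{\sqrt 2}(e_1-ie_2)\right)\\
(z,[1]_2)\mapsto\left(\tfrac{|z|^{-1}}{\sqrt 2}(e_0-e_3),\,\tfrac{|z|}{\sqrt 2}(e_0+e_3),\,\tfrac{\overline z/|z|}{\sqrt 2}(e_1+ie_2)\right)
\end{cases}
\]
whose image is exactly $N$ (the opposite inclusion is verified by the same case-by-case analysis as in the proof of Proposition \ref{principalA}, separating the choice of which principal direction $l$ lies along). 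This shows $N$ is a smooth submanifold of $(\C^4)^3$, and consequently $\mathcal{N}$ is a smooth submanifold of $T_\C\mathcal{M}^3$.

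Finally, equivariance of $\beta$ under the right action of $\C^*\rtimes_g\Z/2\Z$ is immediate from the formulas, so the maps $\Psi_{T_\C\mathcal{M}^3}^{-1}\circ(Id_U\times\beta)$ provide principal bundle charts; these cover $\mathcal{M}$ as $x_0$ varies, endowing $\mathcal{N}$ with a $\C^*\rtimes_g\Z/2\Z$ principal bundle structure. The main obstacle I anticipate is the orientation bookkeeping: one must verify both that Definition \ref{orientationOfTriad} singles out a unique sign in the initial parametrization, and that the involution $(l,n,m)\mapsto(n,l,\overline m)$ sends oriented triples to oriented triples (so that the second line of $\beta$ really lands in $\mathcal{N}$). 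Both checks reduce to $4\times 4$ signed determinant calculations built from $(\tfrac{l+n}{\sqrt 2},\Re m,-\Im m,\tfrac{l-n}{\sqrt 2})$ in the basis $(e_0,e_1,e_2,e_3)$.
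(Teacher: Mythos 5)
Your proof is correct and follows essentially the same approach as the paper's: trivialize over $U$ using an adapted oriented orthonormal tetrad, solve the algebraic constraints to obtain the parametrization $\beta$ (yours coincides with the paper's after translating to $\C^4$-components), and conclude via equivariance and properness. The orientation checks you flag — that Definition \ref{orientationOfTriad} forces $b=-ia$, and that $(l,n,m)\mapsto(n,l,\overline m)$ preserves orientation — are precisely the signed-determinant computations the paper leaves implicit, and both do work out to positive determinants.
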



\begin{proof}
Let $x\in \mathcal{M}$, there exists an open neighborhood $U$ of $x$ such that there exists an oriented orthonormal tetrad $(e_0,e_1, e_2, e_3)$ with $e_0+e_3$ and $e_0-e_3$ are future oriented along the null principal directions. We denote by $\Psi$ the corresponding trivialization of $\mathfrak{O}$. Then $\Psi_{T_\C \mathcal{M}^3}(\mathcal{N}) = U\times N$ with $N$ subset of $(\C^4)^3$ of $(l,n,m)$ such that:
\begin{align*}
\begin{cases}
l &= \frac{\sqrt{2}}{2}\begin{pmatrix} \lambda\\ 0 \\ 0 \\ \lambda \end{pmatrix}\\
n &= \frac{\sqrt{2}}{2}\begin{pmatrix} \lambda^{-1} \\ 0 \\ 0 \\ - \lambda^{-1} \end{pmatrix} \\
m &= \frac{\sqrt{2}}{2}\begin{pmatrix} 0 \\ e^{i\theta} \\ -ie^{i\theta} \\ 0 \end{pmatrix} 
\end{cases}
\text{ or }
\begin{cases}
n &= \frac{\sqrt{2}}{2}\begin{pmatrix} \lambda\\ 0 \\ 0 \\ \lambda \end{pmatrix}\\
l &= \frac{\sqrt{2}}{2}\begin{pmatrix} \lambda^{-1} \\ 0 \\ 0 \\ - \lambda^{-1} \end{pmatrix} \\
m &= \frac{\sqrt{2}}{2}\begin{pmatrix} 0 \\ e^{i\theta} \\ ie^{i\theta} \\ 0 \end{pmatrix} 
\end{cases}
\end{align*}
where $\lambda$ is a positive real and $\theta$ is any real.

We have the following injective proper immersion (in $(\C^4)^3$)
\[
\beta:\begin{cases}
\C^*\rtimes_g \Z/2\Z \rightarrow N \\
(z, [0]_2) \mapsto \left(\frac{1}{\sqrt{2}}\begin{pmatrix}|z| \\ 0 \\ 0 \\ |z|\end{pmatrix}, \frac{1}{\sqrt{2}}\begin{pmatrix}|z|^{-1} \\ 0 \\ 0 \\-|z|^{-1}\end{pmatrix}, \frac{1}{\sqrt{2}}\begin{pmatrix} 0 \\ \frac{z}{|z|} \\ -i\frac{z}{|z|} \\ 0\end{pmatrix}\right)\\
(z, [1]_2) \mapsto \left(\frac{1}{\sqrt{2}}\begin{pmatrix}|z|^{-1} \\ 0 \\ 0 \\ -|z|^{-1}\end{pmatrix}, \frac{1}{\sqrt{2}}\begin{pmatrix}|z| \\ 0 \\ 0 \\|z|\end{pmatrix}, \frac{1}{\sqrt{2}}\begin{pmatrix} 0 \\ \frac{\overline{z}}{|z|} \\ i\frac{\overline{z}}{|z|} \\ 0\end{pmatrix}\right)
\end{cases}
\]
This map has image $N$ so $N$ is a submanifold of $(\C^4)^3$ and therefore, $\mathcal{N}$ is a submanifold of $T_\C\mathcal{M}^3$. Moreover, the maps $\beta$ (around each $x\in\mathcal{M}$) define the structure of a $\C^*\rtimes_g \Z/2\Z$ principal bundle on $\mathcal{N}$.
\end{proof}

We have a natural map:
\[
d: \begin{cases} \mathcal{A} \rightarrow \mathcal{N} \\
(o, \iota) \mapsto (o\otimes \overline{o}, \iota\otimes \overline{\iota}, o\otimes \overline{\iota})
\end{cases}
\]
\begin{remark}
The fact that the map is well defined follows from proposition \ref{relEpsilonG} and the following remark about orientation: If $(o, \iota)\in \mathcal{A}_x$, then if we fix a couple of compatible local trivializations $(\Phi, \Psi)$around $x$, $M:=\left(pr_{\C^2}(\Phi_{\mathcal{S}}(o)), pr_{\C^2}(\Phi_{\mathcal{S}}(\iota))\right) \in SL(2,\C)$. Then by a change of compatible local trivializations (associated to the smooth maps $x\in U \mapsto M^{-1}$ and $x\in U \mapsto \tilde{p}(M^{-1})$), we can assume that $M = \begin{pmatrix} 1 & 0 \\ 0 & 1\end{pmatrix}$. It follows that $pr_{\C^4}\Psi_{T_\C\mathcal{M}}(o\otimes\overline{o}) = \frac{\sqrt{2}}{2}\begin{pmatrix} 1 \\ 0 \\ 0 \\1 \end{pmatrix}$, $pr_{\C^4}\Psi_{T_\C\mathcal{M}}(\iota\otimes\overline{\iota}) = \frac{\sqrt{2}}{2}\begin{pmatrix} 1 \\ 0 \\ 0 \\-1 \end{pmatrix}$ and $pr_{\C^4}\Psi_{T_\C\mathcal{M}}(o\otimes\overline{\iota}) = \frac{\sqrt{2}}{2}\begin{pmatrix} 0 \\ 1 \\ -i \\0 \end{pmatrix}$. Since $\Psi$ is a trivialization of $\mathfrak{O}$ (oriented orthonormal bases), $\Psi_{T_\C\mathcal{M}}^{-1}$ sends the canonical basis of $\R^4$ on an oriented orthonormal basis of $T\mathcal{M}$. Therefore $(o \otimes \overline{o}, \iota \otimes \overline{\iota}, o\otimes \overline{\iota})$  is oriented in the sens of definition \ref{orientationOfTriad}.
\end{remark}

\subsubsection{Properties of $d$}
\begin{prop}
\label{doubleCover}
The map $d$ is a double covering map from $\mathcal{A}$ to $\mathcal{N}$ and for $[(z,[u]_4)]\in G_{\mathcal{A}}$ we have 
\[ d(a\cdot [(z,[u]_4)]) = d(a)\cdot (z^2,[u]_2)\]
\end{prop}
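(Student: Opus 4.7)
The plan is to introduce the group homomorphism
\[
\phi : G_{\mathcal{A}} \longrightarrow \C^*\rtimes_g \Z/2\Z,\qquad [(z,[u]_4)]\longmapsto (z^2,[u]_2),
\]
then verify the $\phi$-equivariance of $d$ by checking it on a set of generators of $G_{\mathcal{A}}$, and finally read off $d$ in the local trivializations of propositions \ref{principalA} and \ref{principalN} to conclude that it is a 2-to-1 covering. First, since $(-1)^2=1$ and $[2]_2=[0]_2$, the element $(-1,[2]_4)$ generating $H$ is sent to the identity, so $\phi$ descends to the quotient $G_{\mathcal{A}}$; the identity $(z^{(-1)^k})^2=(z^2)^{(-1)^k}$ shows that $\phi$ is a Lie group homomorphism. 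Its kernel is $\{[(1,[0]_4)],[(-1,[0]_4)]\}$, of order 2, so $\phi$ is a surjective 2-to-1 Lie group covering.

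For the equivariance, write $a=(o,\iota)$ and $d(a)=(l,n,m)$. The elements $[(z,[0]_4)]$ (for $z\in\C^*$) together with $[(1,[1]_4)]$ generate $G_{\mathcal{A}}$, and since $\phi$ is a homomorphism the equivariance formula for general $g$ follows from it on generators. For the first generator:
\[
d(zo,z^{-1}\iota)=\bigl(z\bar z\,o\otimes\bar o,\ z^{-1}\bar z^{-1}\,\iota\otimes\bar\iota,\ z\bar z^{-1}\,o\otimes\bar\iota\bigr)=\bigl(|z|^2 l,\ |z|^{-2}n,\ \tfrac{z^2}{|z|^2}m\bigr),
\]
using $z\bar z^{-1}=z^2/|z|^2$, which is exactly $d(a)\cdot(z^2,[0]_2)$. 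For the second, invoking the remark that $\overline{a\otimes\bar b}=b\otimes\bar a$:
\[
d(i\iota,io)=(\iota\otimes\bar\iota,\ o\otimes\bar o,\ \iota\otimes\bar o)=(n,l,\bar m)=d(a)\cdot(1,[1]_2).
\]

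To deduce that $d$ is a 2-to-1 covering map, fix $x_0\in\mathcal{M}$ and choose compatible local trivializations $(\Phi,\Psi)$ on a neighborhood $U$ of $x_0$ as in the proofs of propositions \ref{principalA} and \ref{principalN}, with associated parametrizations $\alpha$ of the fiber of $\mathcal{A}$ and $\beta$ of the fiber of $\mathcal{N}$. Both $\alpha$ and $\beta$ are equivariant by construction, so the induced trivializations $\Phi_{\mathcal{A}}:U\times G_{\mathcal{A}}\to\pi_{\mathcal{A}}^{-1}(U)$ and $\Psi_{\mathcal{N}}:U\times(\C^*\rtimes_g\Z/2\Z)\to\pi_{\mathcal{N}}^{-1}(U)$ are equivariant. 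A direct application of $i_0$ at $\alpha([(1,[0]_4)])$ gives $\beta((1,[0]_2))$, i.e.\ $d\circ\Phi_{\mathcal{A}}(x,[(1,[0]_4)])=\Psi_{\mathcal{N}}(x,(1,[0]_2))$ for every $x\in U$. Combining this with the equivariance of $d$, $\Phi_{\mathcal{A}}$, and $\Psi_{\mathcal{N}}$, we get that $d$ in these trivializations reads $(x,g)\mapsto(x,\phi(g))$. Since $\phi$ is a 2-to-1 Lie group covering, $d$ is locally (and therefore globally) a 2-to-1 covering map.

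The only real obstacle is keeping the many identifications consistent: the isomorphism $j$ between $\mathcal{S}\otimes\overline{\mathcal{S}}$ and $T_\C\mathcal{M}$, the map $i_0$, the quotient by $H$, and especially the fact that the $\Z/2\Z$-action on $\mathcal{N}$ conjugates scalar coefficients together with swapping $m\leftrightarrow\bar m$. Once the homomorphism $\phi$ is introduced, each of the steps above is a short direct verification.
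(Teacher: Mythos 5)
Your proof is correct and follows essentially the same route as the paper's: both rely on the compatible trivializations and the parametrizations $\alpha$, $\beta$ from the proofs of propositions \ref{principalA} and \ref{principalN}. The only difference is organizational: you verify the $\phi$-equivariance of $d$ abstractly on generators first (which also makes the observation of remark \ref{kernelMap} an explicit step), so that the trivialized computation reduces to the single basepoint check $\hat d(\alpha([(1,[0]_4)]))=\beta((1,[0]_2))$, whereas the paper computes the full fiber map $\hat d$ directly and reads off $\beta^{-1}\circ\hat d\circ\alpha$. Both are short direct verifications and arrive at the same local form $(x,g)\mapsto(x,\phi(g))$.
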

\begin{remark}
\label{kernelMap}
The map $\begin{cases}\C^*\rtimes_f \Z/4\Z \rightarrow \C^*\rtimes_g \Z/2\Z\\
(z, [u]_4)\mapsto (z^2, [u]_2)\end{cases}$ is well defined and is a group morphism (the key point is that $f([u]_4)=g([u]_2)$). The normal subgroup $H$ is included in the kernel so we have a group morphism $\begin{cases}G_{\mathcal{A}}\rightarrow \C^*\rtimes_g \Z/2\Z\\ [(z, [u]_4)]\mapsto (z^2, [u]_2)\end{cases}$. The map is surjective and its kernel is the discrete normal subgroup $\left\{[(1, [0]_4)], [(-1, [0]_4)]\right\}$. Therefore, it is a Lie group double covering map.
\end{remark}
\begin{proof}
Let $x\in \mathcal{M}$, we define $U$, $\Phi$ and $\Psi$ as in the proof of proposition \ref{principalA}.Then the map $\Phi_{T_\C\mathcal{M}^3}\circ d\circ\left(\Psi^{-1}_{\mathcal{S}\times\mathcal{S}}\right)_{|_{U\times A}} = Id_U\times \hat{d}$ where
\[
\hat{d}:\begin{cases}
A \rightarrow N \\
\left(\begin{pmatrix}z \\ 0\end{pmatrix}, \begin{pmatrix}0 \\ z^{-1}\end{pmatrix}\right) \mapsto \left( \frac{1}{\sqrt{2}}\begin{pmatrix}|z|^2 \\ 0 \\ 0 \\ |z|^2\end{pmatrix}, \frac{1}{\sqrt{2}}\begin{pmatrix}|z|^{-2} \\ 0 \\ 0\\ -|z|^{-2} \end{pmatrix}, \frac{1}{\sqrt{2}}\begin{pmatrix}0\\ \frac{z^2}{|z|^2} \\ -i\frac{z^2}{|z|^2} \\0\end{pmatrix}\right)\\
\left(\begin{pmatrix}0 \\ iz^{-1}\end{pmatrix}, \begin{pmatrix}iz \\ 0\end{pmatrix}\right) \mapsto \left( \frac{1}{\sqrt{2}}\begin{pmatrix}|z|^{-2} \\0 \\0 \\-|z|^{-2}\end{pmatrix}, \frac{1}{\sqrt{2}}\begin{pmatrix} |z|^2 \\ 0 \\ 0 \\|z|^{2}\end{pmatrix}, \frac{1}{\sqrt{2}}\begin{pmatrix}0\\ \frac{\overline{z}^2}{|z|^2} \\i\frac{\overline{z}^2}{|z|^2} \\0 \end{pmatrix}\right)
\end{cases}
\]
($A$ and $N$ are as defined in the proofs of propositions \ref{principalA} and \ref{principalN}. The submanifold $A$ is parametrized by $G_{\mathcal{A}}$ using the maps $\alpha$ and $B$ is parametrized by $\C^*\rtimes_g\Z/2\Z$ using the map $\beta$)
With this expression, we deduce the expression of the map $\beta^{-1} \circ\hat{d}\circ \alpha$:
\[\beta^{-1} \circ \hat{d}\circ \alpha:
\begin{cases}
G_{\mathcal{A}} \rightarrow \C^*\rtimes_g \Z/2\Z \\
[(z, [u]_4)] \mapsto (z^2, [u]_2)
\end{cases}
\]
To conclude the proof, note that $Id_U\times \left(\beta^{-1} \circ\hat{d}\circ \alpha\right)$ is the expression of the map $d$ in local trivializations for the structure of principal bundles (the same trivializations used to define the structure in the proof of propositions \ref{principalA} and \ref{principalN}).
\end{proof}

\subsection{Vector bundles associated to $\mathcal{A}$ and $\mathcal{N}$ and spin weighted functions}
In this section, we have to make an assumption on the topology of $\mathcal{A}$. We assume that $\mathcal{A}$ has exactly two connected components. This assumption is true for a large variety of spacetimes of interest thanks to the following proposition:
\begin{prop}
\label{simplyConnected}
If $\mathcal{M}$ is simply connected, then $\mathcal{A}$ and $\mathcal{N}$ have exactly two connected components.
\end{prop}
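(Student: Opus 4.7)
The plan is to exploit the principal bundle structures of $\mathcal{A}$ and $\mathcal{N}$, together with the fact that both structure groups have exactly two connected components with \emph{connected} identity component. Reducing the structure group to the component group then produces a $\Z/2\Z$-cover of $\mathcal{M}$, which trivializes when $\mathcal{M}$ is simply connected.

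Concretely, I would begin with $\mathcal{A}$. Recall from the discussion before Proposition~\ref{principalA} that the identity component $\C^*_0 \leq G_{\mathcal{A}}$ is normal, connected (homeomorphic to $\C^*$), and the quotient $G_{\mathcal{A}} / \C^*_0 \cong \Z/2\Z$. Therefore the quotient map $\mathcal{A} \to \mathcal{A}/\C^*_0$ is a principal $\C^*_0$-bundle, and $\mathcal{A}/\C^*_0 \to \mathcal{M}$ inherits the structure of a principal $\Z/2\Z$-bundle over $\mathcal{M}$; having discrete structure group, it is precisely a $2$-fold covering of $\mathcal{M}$. Since $\mathcal{M}$ is simply connected (and in particular connected), every $2$-fold cover is classified by $\mathrm{Hom}(\pi_1(\mathcal{M}), \Z/2\Z) = 0$, so the cover is trivial: $\mathcal{A}/\C^*_0 \cong \mathcal{M} \sqcup \mathcal{M}$, which has exactly two connected components.

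It remains to check that each of these two components pulls back to a single connected component of $\mathcal{A}$. Over each copy of $\mathcal{M}$ in $\mathcal{A}/\C^*_0$, the restriction of $\mathcal{A} \to \mathcal{A}/\C^*_0$ is a principal $\C^*_0$-bundle with connected base and connected fiber; applying the long exact sequence of homotopy groups of this fibration (or, more directly, lifting paths along the local trivializations provided by Proposition~\ref{principalA}), one sees that the total space is connected. Hence $\mathcal{A}$ has exactly two connected components.

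For $\mathcal{N}$, the identical argument applies: the structure group $\C^* \rtimes_g \Z/2\Z$ is topologically two disjoint copies of $\C^*$, with connected identity component $\C^* \times \{[0]_2\}$ and quotient $\Z/2\Z$, so the same reduction yields a trivial $2$-fold cover of $\mathcal{M}$ and $\mathcal{N}$ has exactly two connected components. Alternatively, one can invoke Proposition~\ref{doubleCover}: the map $d : \mathcal{A} \to \mathcal{N}$ is a continuous surjection, so $\mathcal{N}$ has at most two components, while the above reduction argument shows it has at least two. The only subtle point — and the one I expect to require the most care in writing — is the justification that the fiber bundle with connected base and connected fiber has connected total space; everything else is group-theoretic bookkeeping made possible by the local trivializations constructed in the proofs of Propositions~\ref{principalA} and~\ref{principalN}.
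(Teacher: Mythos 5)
Your proof is correct, but it takes a genuinely different route from the paper's. The paper argues directly by contradiction: it notes that each fiber of $\mathcal{A}$ has two components, so $\mathcal{A}$ has one or two; assuming it has only one, it produces a path $\gamma$ in $\mathcal{A}$ joining the two components of a single fiber $\mathcal{A}_x$, projects it to a loop in $\mathcal{M}$, nullhomotopes that loop using simple connectivity, and lifts the nullhomotopy via the homotopy lifting property for the fiber bundle $\mathcal{A}\to\mathcal{M}$ to deform $\gamma$ into a path inside $\mathcal{A}_x$, contradiction. You instead reduce the structure group: $\C^*_0\trianglelefteq G_\mathcal{A}$ is the connected identity component with $G_\mathcal{A}/\C^*_0\cong\Z/2\Z$, so $\mathcal{A}/\C^*_0\to\mathcal{M}$ is a principal $\Z/2\Z$-bundle, i.e.\ a double cover, which must be trivial over a simply connected base, giving $\mathcal{M}\sqcup\mathcal{M}$; then each copy pulls back along the $\C^*_0$-bundle $\mathcal{A}|_{\text{component}}\to\mathcal{M}$ with connected base and connected fiber, hence connected total space. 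What your approach buys is that it cleanly separates the two sources of disconnectedness (the $\pi_0$ of the structure group and the topology of $\mathcal{M}$) and dispenses with the explicit path-and-homotopy manipulation; what the paper's approach buys is that it only invokes the homotopy lifting property once, without needing the quotient $\mathcal{A}/\C^*_0$ or the classification of covers. Both handle $\mathcal{N}$ the same way in spirit: either redo the argument for $\mathcal{N}$, or transfer the result through the double cover $d:\mathcal{A}\to\mathcal{N}$ (the paper does the latter via Remark~\ref{compAcompN}, which verifies $d(\mathcal{A}_0)\cap d(\mathcal{A}_1)=\emptyset$; a bare surjectivity of $d$ alone would only give an upper bound of two components, as you correctly note). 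One small point of care you flagged yourself: the claim that a fiber bundle with connected base and connected fiber has connected total space deserves a sentence — either the open-closed decomposition argument using local triviality, or the tail of the homotopy exact sequence $\pi_1(\mathcal{M})\to\pi_0(\C^*_0)\to\pi_0(\mathcal{A}|_{\text{component}})\to\pi_0(\mathcal{M})$ — but this is standard and fills the only gap in your writeup.
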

\begin{proof}
We sketch the proof for $\mathcal{A}$, then  see remark \ref{compAcompN} to deduce the result for $\mathcal{N}$.
We know that each fiber of $\mathcal{A}$ has two connected components diffeomorphic to $\C^*$. Then we deduce (since $\mathcal{M}$ is connected) that $\mathcal{A}$ has one or two connected components. By contradiction, assume that $\mathcal{A}$ has only one connected component. Take $x\in \mathcal{M}$, we call $\C^*_0$ and $\C^*_1$ the two connected components of $\mathcal{A}_x$. By the hypothesis, there exists a continuous path $\gamma : [0,1]\rightarrow \mathcal{A}$ such that $\gamma(0) = 1_0$ and $\gamma(1)= 1_1$. Then $\pi_{\mathcal{A}}\circ \gamma$ is a loop on $\mathcal{M}$. But $\mathcal{M}$ is simply connected. Therefore there exists a homotopy $f:[0,1]\times[0,1]\rightarrow \mathcal{M}$ between $\pi_{\mathcal{A}}\circ \gamma$ and the constant loop $t \in [0,1]\mapsto x$ such that $f_t(0) = x$ and $f_t(1)=x$ for all $t\in[0,1]$. But $\mathcal{A}$ is a fiber bundle over $\mathcal{M}$ so it has the homotopy lifting property and we can find a lift $\tilde{f}$ of $f$ such that $\tilde{f}_0 = \gamma$. But the concatenation of $t\in [0,1]\mapsto \tilde{f}_t(0)$, $t \in [0,1]\mapsto \tilde{f}_1(t)$ and $t\in [0,1] \mapsto \tilde{f}_{1-t}(1)$ is a continuous path with value in $\mathcal{A}_x$ joining $1_0$ and $1_1$ which is a contradiction. 
\end{proof}

\begin{remark}
\label{compAcompN}
If $\mathcal{A}$ has two connected components $\mathcal{A}_0$ and $\mathcal{A}_1$, then $d(\mathcal{A}_0)$ and $d(\mathcal{A}_1)$ are disjoint. Indeed, if $d(x) = d(y)$ either $x=y$ or, by proposition \ref{doubleCover} (and the end of remark \ref{kernelMap}), we have $y = x\cdot[(-1, [0]_4)]$ and the continuous path $t\mapsto x\cdot[(e^{it\pi}, [0]_4)]$ joins $x$ and $y$. Therefore $\mathcal{N}$ has also two connected components given by $d(\mathcal{A}_0)$ and $d(\mathcal{A}_1)$ (Indeed, these two sets are connected, and there exists no continuous path from one to the other otherwise we could lift this path to a path between $\mathcal{A}_0$ and $\mathcal{A}_1$).
\end{remark}

\begin{prop}
\label{globalln}
We assume that $\mathcal{M}$ is connected.
$\mathcal{N}$ has two connected components if and only if there exists two global smooth null future oriented vector fields $l$ and $n$ such that at each point $x\in \mathcal{M}$, $l(x)$ and $n(x)$ are independent and principal.
\end{prop}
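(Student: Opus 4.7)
The plan is to prove both implications using the $\C^* \rtimes_g \Z/2\Z$-principal bundle structure from Proposition \ref{principalN} together with basic connectivity and bundle-triviality arguments.

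For the ($\Leftarrow$) direction, starting from the given global $l$ and $n$, I first rescale $n$ smoothly so that $g(l,n) = 1$ pointwise (two independent future-oriented null vectors have strictly positive inner product, so this is well defined and smooth). For each $x \in \mathcal{M}$, the set of $m$ making $(l(x), n(x), m) \in \mathcal{N}_x$ is a single $S^1$-orbit; collecting them yields a smooth $S^1$-principal subbundle $\mathcal{N}_+$ of $\mathcal{N}$ over $\mathcal{M}$, which is connected since base and fibers are. Its $\C^*$-saturation $\mathcal{N}_+ \cdot \C^*$ is then connected, and an analogous construction with $(n,l)$ in place of $(l,n)$ gives a second connected piece $\mathcal{N}_- \cdot \C^*$. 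These two are disjoint (the first vector of any element is positively proportional to $l$ in one and to $n$ in the other, and $l, n$ are independent), and they cover $\mathcal{N}$ since on a Petrov type D spacetime the principal null directions at each point are exactly $\R l(x) \cup \R n(x)$. Hence $\mathcal{N}$ has exactly two connected components.

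For ($\Rightarrow$), let $\mathcal{N}_0$ be one connected component. The identity component of $\C^* \rtimes_g \Z/2\Z$ is $\C^*$, which acts continuously and so preserves $\mathcal{N}_0$; since each fiber of $\mathcal{N}$ decomposes into exactly two $\C^*$-orbits (its two connected components), local triviality forces $\mathcal{N}_0 \to \mathcal{M}$ to be a $\C^*$-principal subbundle. I then quotient by the subgroup $S^1 \subset \C^*$: the action $(l', n', m') \mapsto (l', n', zm')$ for $|z| = 1$ fixes the first two slots, so the projection onto the first slot and onto the second slot descend to smooth maps $\mathcal{N}_0 / S^1 \to T_\C \mathcal{M}$. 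The quotient $\mathcal{N}_0 / S^1 \to \mathcal{M}$ is an $\R_{>0}$-principal bundle; since $\R_{>0}$ is contractible (isomorphic to $(\R,+)$ via $\log$), this bundle is trivial and admits a smooth global section $\sigma$. Composing the descended first- and second-slot maps with $\sigma$ yields the desired smooth global vector fields, which inherit future-orientation, nullity, principality, and independence from the definition of $\mathcal{N}$.

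The main obstacle is in the ($\Rightarrow$) direction: a global smooth section of $\mathcal{N}_0$ itself cannot be expected in general, since $\mathcal{N}_0$ is generically a nontrivial $\C^*$-principal bundle (equivalent to a nontrivial complex line bundle). The key observation that makes the argument go through is that the $S^1$-subgroup of $\C^*$ acts only on the auxiliary component $m$ while fixing $l$ and $n$; passing to $\mathcal{N}_0 / S^1$ therefore discards the topological obstruction while retaining precisely the data required to produce the vector fields.
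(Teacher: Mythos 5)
Your proof is correct. The essential mechanism matches the paper's in both directions, but the packaging in the ($\Rightarrow$) direction is noticeably different. The paper observes that two elements of $\mathcal{N}_0$ over the same base point always have positively collinear first (and second) slots, takes local sections of $\mathcal{N}_0$, and patches the resulting local $(l,n)$-pairs by a partition of unity, using convexity of the cone of positive multiples of a principal null vector. You instead identify $\mathcal{N}_0$ as a $\C^*$-principal subbundle, quotient away the $S^1$ subgroup that only rotates $m$, and then invoke triviality of the resulting $\R_{>0}$-principal bundle because the structure group is contractible. These are the same argument under the hood (triviality of contractible-structure-group bundles over a manifold is itself typically proved with a partition of unity), but your version isolates cleanly \emph{why} the construction works: the only topological obstruction lives in the $S^1$-direction, which is exactly the $m$-freedom, and discarding it leaves a topologically trivial bundle carrying precisely the $(l,n)$-data. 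For ($\Leftarrow$), your argument is a mildly more elaborate version of the paper's, which simply exhibits $\mathcal{N}_0$ and $\mathcal{N}_1$ as the subsets picked out by the collinearity pattern of the first two slots with $(l,n)$; your $\C^*$-saturations of the two $S^1$-subbundles amount to the same decomposition, with the connectedness and disjointness made slightly more explicit.
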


\begin{proof}
We assume that $\mathcal{N}$ has two connected components. We choose one that we call $\mathcal{N}_0$. Then if $(l,n,m), (l',n',m')\in \mathcal{N}_0$ with $\pi_\mathcal{N}(l,n,m) = \pi_\mathcal{N}(l',n',m')$, there exists $z \in \C^*$ such that $(l,n,m) = (l',n',m')\cdot(z,[0]_2) = (|z|l', |z|^{-1}n', \frac{z}{|z|}m')$. In particular $l$ and $l'$ are positively collinear as well as $n$ and $n'$. Then any convex combination of $l$ and $l'$ is principal null, the same for $n$ and $n'$ and the two are independent. This remark enables us to construct global vector fields $n$ and $l$ from local sections of $\mathcal{N}_0$ using a partition of unity. 
If we assume the existence of the global vector fields $l$ and $n$, then we have the two connected components:\newline
$\mathcal{N}_0 := \left\{ (u, v, m): u \text{ and } l \text{ are collinear } \text{ and } v \text{  and  } n \text{ are collinear }\right\}$ and\newline 
$\mathcal{N}_1 := \left\{ (u, v, m): u \text{ and } n \text{ are collinear } \text{ and } v \text{  and  } l \text{ are collinear }\right\}$.
\end{proof}

From now we assume that $\mathcal{A}$ has two connected components. We choose one component that we call $\mathcal{A}_0$ and we define $\mathcal{N}_0 := d(\mathcal{A}_0)$ (which is one of the two connected components of $\mathcal{N}$ according to remark \ref{compAcompN})). The other connected component is called $\mathcal{A}_1$ (and $\mathcal{N}_1$). This choice defines an additional notion of orientation which corresponds to an ordering of the principal null directions ($\mathcal{A}_0$ is the subset of oriented elements of $\mathcal{A}$).

The right action of the subgroup $\C^*_0:=\left\{ [(z, [0]_4)], z\in \C^*\right\}$ of $G_{\mathcal{A}}$ gives a structure of $\C^*$ principal bundle on $\mathcal{A}_0$. Similarly, $\left\{(z,[0]_2), z\in \C^*\right\}$ gives a structure of $\C^*$ principal bundle on $\mathcal{N}_0$. The action of $[(1, [1]_4)]$ induces a diffeomorphism between $\mathcal{A}_0$ and $\mathcal{A}_1$ (similarly, the action of $(1,[1]_2)$ induces a diffeomorphism between $\mathcal{N}_0$ and $\mathcal{N}_1$).

\begin{remark}
Since $\C^*$ is commutative, the right action is also a left action (and we use both notations in the following).
\end{remark}

Let $w,s\in \frac{1}{2}\Z$, we have the following representation of $\C^*$:
\[\rho_{s,w}:
\begin{cases}
\C^* \rightarrow GL(\C) \\
z \mapsto (a\mapsto z^{-w-s}\overline{z}^{-w+s}a)
\end{cases}
\]
We define the bundle $\mathcal{B}(s,w)$ as the complex line bundle associated to $\mathcal{A}_0$ (with the right action) and the representation $\rho_{s,w}$. We have a natural identification between sections of $\mathcal{B}(s,w)$ and the set of complex valued functions $f$ defined on $\mathcal{A}_0$ such that for all $z\in \C^*$ \begin{equation}f(a \cdot z) = z^{w+s}\overline{z}^{w-s}f(a). \label{spinWeight}\end{equation} The identification is given by: $f \mapsto (x\mapsto [(a, f(a))]$ where $a$ is any element of $(\mathcal{A}_0)_x)$. We call a spin weighted function with weights $(s,w)$ any section of $\mathcal{B}(s,w)$ or equivalently (with the identification) any function on $\mathcal{A}_0$ satisfying \eqref{spinWeight}. We denote by $W_{(s,w)}$ the set of spin weighted functions.
\begin{remark}
\label{productSpinWeighted}
We have the following canonical identification $\mathcal{B}(s+s',w+w') = \mathcal{B}(s,w)\otimes\mathcal{B}(s', w')$.
\end{remark}
\begin{remark} The number $s$ is called the spin weight and the number $w$ is called the boost weight in \cite{penrose1984spinors} section 4.12. 
\end{remark}

We call $\boldsymbol{o}$ (resp. $\boldsymbol{\iota}$) the first (resp. second) projection from $\mathcal{A}_0$ to $\mathcal{S}$ and we define $\boldsymbol{l} := \boldsymbol{o}\otimes\overline{\boldsymbol{o}}$, $\boldsymbol{n} := \boldsymbol{\iota}\otimes\overline{\boldsymbol{\iota}}$ and $\boldsymbol{m} = \boldsymbol{o}\otimes\overline{\boldsymbol{\iota}}$. Note that thanks to the map $j$ the maps $\boldsymbol{l}$, $\boldsymbol{m}$ and $\boldsymbol{n}$ can be seen as $T_{\C}\mathcal{M}$ valued maps.
We can think of these maps as generalized spin frame and generalized tetrad. Note that due to the relation $\boldsymbol{o}(u\cdot z) = z\boldsymbol{o}(u)$ (resp. $\boldsymbol{\iota}(u\cdot z) = z^{-1}\boldsymbol{\iota}(u)$) for $u \in \mathcal{A}_0$, we can identify $\boldsymbol{o}$ (resp. $\boldsymbol{\iota}$) with a smooth section of the bundle $\mathcal{B}\left(\frac{1}{2},\frac{1}{2}\right)\otimes \mathcal{S}$ (resp. $\mathcal{B}\left(-\frac{1}{2}, -\frac{1}{2}\right)\otimes \mathcal{S}$). 

The following proposition is the main reason of why we are interested in spin weighted functions.
\begin{prop}[Spin weighted components of cospinors]
\label{correspondence}
We denote by $\mathcal{S}_{a,b} := \otimes^{a}\mathcal{S}' \otimes^{b}\overline{\mathcal{S}}'$. There is a bijection $F$ between the set of sections of $\mathcal{S}_{a,b}$ and the set $\prod_{I\subset \llbracket 1, a\rrbracket, J\subset \llbracket 1, b\rrbracket} W_{(|I|-|J|+\frac{b-a}{2}, |I|+|J|-\frac{a+b}{2})}$ given by $F:u \mapsto \prod_{I\subset \llbracket 1, a\rrbracket, J\subset \llbracket 1, b\rrbracket} u_{I,J}$ where for $y\in \mathcal{A}$, $u_{I,J}(y)= u(g_1(y), \dots, g_a(y), h_1(y), \dots , h_b(y))$ with $g_i = \boldsymbol{o}$ if $i\in I$, $g_i = \boldsymbol{\iota}$ if $i\notin I$, $h_i = \overline{\boldsymbol{o}}$ if $i \in J$ and $h_i = \overline{\boldsymbol{\iota}}$ if $i\notin J$. We call the collection $F(u)$ the collection of spin weighted components of $u$.
\end{prop}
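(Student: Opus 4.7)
The plan is to verify first that $F$ lands in the claimed product of spin weighted function spaces, and then to build an explicit inverse fiber by fiber, exploiting the fact that $(\boldsymbol{o}(y),\boldsymbol{\iota}(y))$ is a basis of $\mathcal{S}_x$ at every $y \in (\mathcal{A}_0)_x$ and that the $\C^*_0$-action on it matches exactly the representation $\rho_{s,w}$ defining $\mathcal{B}(s,w)$.

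For the first step, I would use the identities $\boldsymbol{o}(y \cdot z) = z\boldsymbol{o}(y)$ and $\boldsymbol{\iota}(y\cdot z) = z^{-1}\boldsymbol{\iota}(y)$, which follow from $\boldsymbol{o} = \mathrm{pr}_1$, $\boldsymbol{\iota} = \mathrm{pr}_2$ and the $\C^*_0$-action $(o,\iota)\cdot z = (zo, z^{-1}\iota)$, together with their complex conjugates for $\overline{\boldsymbol{o}}$ and $\overline{\boldsymbol{\iota}}$. Multilinearity of $u$ then gives
\[ u_{I,J}(y\cdot z) = z^{|I|} z^{-(a-|I|)} \overline{z}^{|J|} \overline{z}^{-(b-|J|)} u_{I,J}(y) = z^{2|I|-a} \overline{z}^{2|J|-b} u_{I,J}(y), \]
and solving $w+s = 2|I|-a$, $w-s = 2|J|-b$ recovers exactly the claimed values of $s$ and $w$ from \eqref{spinWeight}.

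For bijectivity, at each $x \in \mathcal{M}$ I would fix some $y \in (\mathcal{A}_0)_x$; by the condition $\epsilon(\boldsymbol{o}(y),\boldsymbol{\iota}(y)) = 1$ in the definition of $\mathcal{A}$, the pair $(\boldsymbol{o}(y),\boldsymbol{\iota}(y))$ is a basis of $\mathcal{S}_x$, so the $2^{a+b}$ tuples $(g_1(y),\dots,h_b(y))$ indexed by $(I,J)$ form a basis of $\mathcal{S}_x^{\otimes a} \otimes \overline{\mathcal{S}}_x^{\otimes b}$. Injectivity of $F$ is therefore immediate fiberwise. For surjectivity, given a collection $(f_{I,J})$ with the claimed weights, I would define
\[ u(x) = \sum_{I,J} f_{I,J}(y)\; \xi^1_{I}\otimes\cdots\otimes\xi^a_{I} \otimes \overline{\eta^1_{J}}\otimes\cdots\otimes\overline{\eta^b_{J}}, \]
where $(\boldsymbol{o}(y)^*,\boldsymbol{\iota}(y)^*)$ is the dual basis of $(\boldsymbol{o}(y),\boldsymbol{\iota}(y))$ in $\mathcal{S}'_x$, $\xi^i_I = \boldsymbol{o}(y)^*$ if $i \in I$ and $\boldsymbol{\iota}(y)^*$ otherwise, and similarly for $\eta^i_J$. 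The critical verification is independence of $y$: replacing $y$ by $y\cdot z$ scales $\boldsymbol{o}(y)^*$ and $\boldsymbol{\iota}(y)^*$ by $z^{-1}$ and $z$ respectively (and the conjugate duals by $\overline{z}^{-1}$ and $\overline{z}$), producing an overall factor $z^{a-2|I|}\overline{z}^{b-2|J|}$ on the tensor part that cancels exactly the factor $z^{2|I|-a}\overline{z}^{2|J|-b}$ coming from the spin weight transformation of $f_{I,J}$. Smoothness of $u$ is then obtained by picking $y$ as a local smooth section of $\mathcal{A}_0$, which exists since $\mathcal{A}_0 \to \mathcal{M}$ is a $\C^*$ principal bundle.

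The only real obstacle is the bookkeeping of exponents in this independence-of-$y$ check; once the cancellation is in place, both injectivity and the fiberwise construction of the inverse reduce to the elementary observation that $(\boldsymbol{o}(y),\boldsymbol{\iota}(y))$ spans $\mathcal{S}_x$, and the whole proposition is essentially a restatement of the fact that the $\C^*_0$-equivariance built into $\mathcal{A}_0$ is precisely dual to the weight grading $\rho_{s,w}$.
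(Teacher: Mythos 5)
Your proof is correct and follows essentially the same route as the paper: verify the weights by computing the equivariance of $u_{I,J}$ directly from $\boldsymbol{o}(y\cdot z)=z\boldsymbol{o}(y)$, $\boldsymbol{\iota}(y\cdot z)=z^{-1}\boldsymbol{\iota}(y)$, then invert fiberwise by observing that $(\boldsymbol{o}(y),\boldsymbol{\iota}(y))$ is a basis of $\mathcal{S}_x$ and checking independence of the choice of $y\in(\mathcal{A}_0)_x$ via the spin-weight transformation law. The paper states this very tersely; you have merely spelled out the exponent bookkeeping and the explicit dual-basis formula for the inverse, which the paper leaves implicit.
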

\begin{proof}
The fact that the components are actually spin weighted functions with the claimed weight is a consequence of the $\C$-linearity of cospinors and the fact that $o(y \cdot z) = zo(y)$ and $\iota(y \cdot z) = z^{-1}\iota(y)$. The fact that $F$ is bijective follows from the construction of the inverse map. Indeed, if we fix $x\in \mathcal{M}$ and $y\in (\mathcal{A}_0)_x$, the spin weighted components evaluated at $y$ give exactly the image of a basis of $\otimes^{a}\mathcal{S}_x\otimes^{b}\mathcal{S}_x$ which correspond to the data of an element in $(\mathcal{S}_{a,b})_x$. The fact that this element does not depend on the choice of $y$ follows from the property \eqref{spinWeight}. 
\end{proof}

\begin{remark}
If $u$ has some regularity as a section, its spin weighted components have the same regularity (seen as section of the line bundle $\mathcal{B}(s,w)$) and reciprocally. It is the major advantage of spin weighted components of a smooth tensor: they are defined globally as smooth objects while being particularly adapted to the geometry.
\end{remark}

\begin{remark}
Thanks to the identification between $\mathcal{S}\otimes\overline{\mathcal{S}}$ and $T_\C\mathcal{M}$, we have also spin weighted components for tensor fields.
\end{remark}

\begin{remark}
Proposition \ref{correspondence} can also be understood if we consider $\boldsymbol{o}$ and $\boldsymbol{\iota}$ as spin weighted spinors (elements of $\Gamma(\mathcal{B}(s,w)\otimes \mathcal{S})$). Indeed, if $u\in \mathcal{S}_{a,b}$, spin weighted components are complete contractions of $u \otimes g_1 \otimes ... \otimes g_a \otimes h_1 \otimes ... \otimes h_b$ and are therefore sections of the tensor product of factors of the form $\mathcal{B}\left(\pm \frac{1}{2}, \pm \frac{1}{2}\right)$ and $\mathcal{B}\left(\pm \frac{1}{2}, \mp \frac{1}{2}\right)$ (and by remark \ref{productSpinWeighted} sections of some $\mathcal{B}(s,w)$).
\end{remark}

\begin{remark}
If $(o, \iota)$ is a local section of $\mathcal{A}_0$, it provide a local trivialization of $\mathcal{A}_0$ and therefore a local trivialization of $\mathcal{B}(s,w)$. The expression of a spin weighted component in this trivialization is obtained by replacing $\boldsymbol{o}$ by $o$, $\boldsymbol{\iota}$ by $\iota$, $\boldsymbol{l}$ by $l$, $\boldsymbol{m}$ by $m$ and $\boldsymbol{n}$ by $n$ in the expression of the component. Therefore the bold font notation is handy to take local trivializations. However, $\boldsymbol{o}$ do not depend on the choice of a particular local trivialization.
\end{remark}

\begin{remark}
\label{decompoMaxwellWeyl}
This decomposition is often used after a first decomposition of the cospinor or cotensor into symetric spinors (see \cite{penrose1984spinors} section 3.3 for more details about this type of decomposition). For example, because the electromagnetic tensor $\mathrm{F}$ is antisymmetric and real, it can be decomposed as: \begin{align}\mathrm{F} = \phi\otimes \overline{\epsilon} + \epsilon\otimes \overline{\phi} \label{eqF}
\end{align} (see (3.4.20) in \cite{penrose1984spinors} for details) where $\phi$ is a section of $(\mathcal{S}')^{\odot 2}$ (where $\odot$ is the symmetric product). Then, the spin weighted components of $\phi$ can be computed using equation \eqref{eqF}:
\begin{align*}
\phi(\boldsymbol{o}, \boldsymbol{o}) &= \mathrm{F}(\boldsymbol{l},\boldsymbol{m})\\
\phi(\boldsymbol{o}, \boldsymbol{\iota}) &= \frac{1}{2}\left(\mathrm{F}(\overline{\boldsymbol{m}},\boldsymbol{m})+\mathrm{F}(\boldsymbol{l},\boldsymbol{n})\right)\\
\phi(\boldsymbol{\iota}, \boldsymbol{\iota}) &= \mathrm{F}(\overline{\boldsymbol{m}},\boldsymbol{n})
\end{align*}
If we fix a local section $(o, \iota)$ of $\mathcal{A}_0$ and write the components in the associated local trivialization, we find the usual spin components of the electromagnetic tensor.

Similarly, we have the following decomposition for the Weyl tensor $W$ (see \cite{penrose1984spinors} 4.6.41):
\begin{align*}
W = \Psi\otimes \overline{\epsilon} \otimes \overline{\epsilon} + \epsilon\otimes\epsilon \otimes \overline{\Psi}
\end{align*}
where $\Psi$ is a section of $(\mathcal{S}')^{\odot 4}$. We can compute the spin weighted components of $\Psi$ from components of $W$:
\begin{align*}
\boldsymbol{\Psi}_0:=&\Psi(\boldsymbol{o}, \boldsymbol{o}, \boldsymbol{o}, \boldsymbol{o}) = W(\boldsymbol{l},\boldsymbol{m},\boldsymbol{l},\boldsymbol{m})\\
\boldsymbol{\Psi}_1:=&\Psi(\boldsymbol{o}, \boldsymbol{o}, \boldsymbol{o}, \boldsymbol{\iota}) = W(\boldsymbol{l}, \boldsymbol{m}, \boldsymbol{l}, \boldsymbol{n})\\
\boldsymbol{\Psi}_2:=&\Psi(\boldsymbol{o}, \boldsymbol{o}, \boldsymbol{\iota}, \boldsymbol{\iota}) = W(\boldsymbol{\overline{l}}, \boldsymbol{m}, \boldsymbol{m}, \boldsymbol{n})\\
\boldsymbol{\Psi}_3:=&\Psi(\boldsymbol{o}, \boldsymbol{\iota}, \boldsymbol{\iota}, \boldsymbol{\iota}) = W(\boldsymbol{l}, \boldsymbol{n}, \overline{\boldsymbol{m}}, \boldsymbol{n})\\
\boldsymbol{\Psi}_4:=&\Psi(\boldsymbol{\iota}, \boldsymbol{\iota},\boldsymbol{\iota}, \boldsymbol{\iota}) = W(\boldsymbol{\overline{m}}, \boldsymbol{n}, \boldsymbol{\overline{m}}, \boldsymbol{n})
\end{align*}
Since $\boldsymbol{l}$ and $\boldsymbol{n}$ (seen as maps from $\mathcal{A}_0$ to $T_\C \mathcal{M}$) are valued in the set of principal null vectors and the space-time is of type D, all the components vanish except $\boldsymbol{\Psi_2}$.
Note that there are different sign conventions for $\Psi_i$ (here we adopted the sign convention of \cite{penrose1984spinors} introduced in equations (4.11.9), but \cite{chandrasekhar1998mathematical} in chapter 1 equations (294) and \cite{teukolsky1973perturbations} in equation (1.3) adopt the opposite convention).
\end{remark}

\begin{remark}
\label{swCompSwValuedCospinor}
We can also define spin weighted components for sections of $\mathcal{S}_{a,b}\otimes \mathcal{B}(s,w)$ (using remark \ref{productSpinWeighted}).
\end{remark}

\subsection{Reduction of $\mathcal{A}_0$ and $\mathcal{N}_0$}
To simplify the computations it is interesting to find a smaller principal bundle with a representation such that the associated vector bundle is isomorphic to $\mathcal{B}(s,w)$.
We can consider $\mathcal{A}_{0,r}:= \mathcal{A}_0/\R_+^*$ (we quotient by the action of $\R_+^*\subset \C^*$). Similarly we define $\mathcal{N}_{0,r} := \mathcal{N}_0/\R_+^*$. We verify that the map $d$ induces a double cover between $\mathcal{A}_{0,r}$ and $\mathcal{N}_{0,r}$ (we still call this induced map $d$). Moreover $\mathcal{A}_{0,r}$ and $\mathcal{N}_{0,r}$ have both a structure of $U(1)$ principal bundle over $\mathcal{M}$. 
\begin{remark}
\label{identification}
According to proposition \ref{globalln}, we have a global choice of null independent principal real smooth vector fields $l$ and $n$. It enables us to make a global choice of representative for $\mathcal{A}_{0,r}$ and $\mathcal{N}_{0,r}$ (note that the ordering $(l,n)$ gives a choice of connected component). We have the following identifications for the reduced bundles 
\[ \mathcal{A}_{0,r}\simeq\left\{ (o,\iota)\in \mathcal{S}\times\mathcal{S}: o\otimes\overline{o} = l, \iota\otimes \overline{\iota} = n\right\}\]
\[ \mathcal{N}_{0,r}\simeq\left\{ m\in T_\C\mathcal{M} : g(m,m)=g(l,m)=g(n,m)=0, g(m,\overline{m})=-1 \text{ and }(n,l,m) \text{ is oriented}\right\}.\] Pay attention to the fact that the identification of $\mathcal{A}_{0,r}$ depends on the particular choice of $l$ and $n$ but the identification of $\mathcal{N}_{0,r}$ only depends on the ordering of $l$ and $n$.
\end{remark}
\begin{remark}
In both cases, the choice of $l$ and $n$ enables to associate each local smooth section of $\mathcal{A}_{0,r}$ (resp. $\mathcal{N}_{0,r}$) to a local smooth section of $\mathcal{A}_0$ (resp. $\mathcal{N}_0$). Therefore, when a choice of $l$ and $n$ has been made, we can work with $\mathcal{A}_{0,r}$ and $\mathcal{N}_{0,r}$ instead of $\mathcal{A}_0$ and $\mathcal{N}_0$.
\end{remark}

\section{Concrete computations in the subextremal Kerr exterior}
We define the Kerr metric with mass parameter $M$ and angular momentum per unit of mass $a$. We assume that $0<a<M$ (subextremal Kerr).
\[
g= \left(1-\frac{2Mr}{\rho^2}\right)\dd t^2 + \left(4Mar\frac{\sin^2\theta}{\rho^2}\right)\dd t \dd \phi - \frac{\rho^2}{\Delta_r}\dd r^2 - \rho^2 \dd \theta^2 - \sin^2\theta\left(r^2 + a^2 + 2Ma^2r\frac{\sin^2\theta}{\rho^2}\right)\dd \phi^2
\]
with
\begin{align*}
\Delta_r &:= r^2 - 2Mr + a^2\\
\rho^2 &:= r^2 + a^2 \cos^2\theta
\end{align*}
We define $r_0 := M+\sqrt{M^2-a^2}$ and we consider first the Kerr exterior $\mathcal{M}:= \R_t\times(r_0,+\infty)\times \mathbb{S}^2$. 
We also define the $Kerr_*$ coordinates: $(t_*, r, \theta_*, \phi_*) = (t + T(r), r, \theta, \phi + A(r))$ with $T(r):= \int_{r_1}^r \frac{a^2+r^2}{\Delta_r} \dd r$ and $A(r)=\int_{r_1}^r \frac{a}{\Delta_r}\dd r$ for some arbitrary (but fixed) $r_1\in (r_0, +\infty)$.
Kerr space time is an important example of Petrov type D space-time (Ricci-flat). In this section we explicit the previous definitions in this concrete case.

\subsection{Complete system of trivializations}
We now compute the concrete topology of the bundles in the Kerr case. We will show on the way that there is no global continuous oriented Newman Penrose tetrad (global continuous section of $\mathcal{N}_0$) nor global continuous normalized spin frame along the null directions (global section of $\mathcal{A}_0$).
Let $M>0$ and $a<M$. We endow $\mathcal{M}:=\R_t\times(r_0, +\infty)\times \mathbb{S}^2$ with the Kerr metric. It is a Petrov type D simply connected space-time.
We saw that in this case, $\mathcal{A}$ and $\mathcal{N}$ have two connected components (proposition \ref{simplyConnected}). Then, proposition \ref{globalln} tells us that a choice of a connected component is given by a choice of global smooth vector fields $(l,n)$ future oriented and independent along principal null directions. Here we take (Kinnersley's tetrad):
\begin{align}
\label{principalVectors}
l &= \frac{r^2+a^2}{\Delta_r}\partial_t + \partial_r + \frac{a}{\Delta_r}\partial_\phi \\
\label{principalVectors2}
n &= \frac{r^2 + a^2}{2\rho^2} \partial_t - \frac{\Delta_r}{2\rho^2} \partial_r + \frac{a}{2\rho^2}\partial_\phi 
\end{align}
We use the identification in remark \ref{identification} to describe $\mathcal{A}_{0,r}$ and $\mathcal{N}_{0,r}$. We define $p = r+ia\cos\theta$. Then we can see that
\begin{equation}
\label{vectorM}
m = \frac{ia\sin\theta}{\sqrt{2}p}\partial_t + \frac{1}{\sqrt{2}p}\partial_\theta + \frac{i}{\sqrt{2}p\sin\theta}\partial_{\phi}
\end{equation}
is a local section of $\mathcal{N}_{0,r}$ over $\mathcal{M}\setminus \R_t\times (r_0,+\infty)\times \left\{N, S\right\}$ where $N$ and $S$ are the north and south poles of $\mathbb{S}^2$. Note that the vector field $m$ cannot be extended to a smooth vector field on $\mathcal{M}$. However it provides a local trivialization of $\mathcal{N}_{0,r}$:
\[
\Psi_m:\begin{cases}
U(1)\times \R_t\times(r_0,+\infty)\times \left(\mathbb{S}^2\setminus{\left\{N,S\right\}}\right)  \rightarrow \mathcal{N}_{0,r} \\
(e^{i\rho}, x) \mapsto e^{i\rho}m(x)\\
\end{cases}
\]

\begin{remark}
Note that given a local trivialization $\Psi:U(1)\times U\rightarrow \mathcal{N}_{0,r} $ on $\mathcal{N}_{0,r}$, we can define a corresponding local trivialization on $\mathcal{N}_0$ by taking 
\[\tilde{\Psi}: \begin{cases}
\C^*\times U\rightarrow \mathcal{N}_0 \\
(z, x) \mapsto (|z|l(x), |z|^{-1}n(x), \Psi(\frac{z}{|z|},x))\\
\end{cases}
\]
\end{remark}

Our next goal is to write a complete system of local trivializations on $\mathcal{N}_{0,r}$.

We define the following map using stereographic coordinates relative to the north pole $(x_N, y_N)$ on $\mathbb{S}^2$
\[
\Psi_N:
\begin{cases}
U(1)\times \R_t \times(r_0, +\infty)\times \left(\mathbb{S}^2\setminus{\left\{N\right\}}\right) \rightarrow T_{\C}\mathcal{M}\\
(e^{i\rho}, t, r, x_N, y_N)\mapsto \frac{e^{i\rho}}{\sqrt{2}\left(r+ia\frac{(x_N^2+y_N^2)-1}{1+x_N^2+y_N^2}\right)}\left(\frac{2ia(x_N-iy_N)}{1+x_N^2+y_N^2}\partial_t+\frac{(x_N^2+y_N^2+1)}{2}\left(-\partial_{x_N}+i\partial_{y_N}\right)\right)
\end{cases}
\]
Note that $\Psi_N$ commutes with the projection $T_{\C}\mathcal{M}\rightarrow \mathcal{M}$.
We remark that on $U(1)\times \R_t \times(r_0, +\infty)\times \mathbb{S}^2\setminus{\left\{N,S\right\}}$ (where $U(1)$ is identified with $U(1)\times \left\{\text{Id}\right\}$) we have:
\[ \Psi_m = \frac{x_N+iy_N}{\sqrt{x_N^2+y_N^2}}\Psi_N = e^{i\phi}\Psi_N \]
where $\phi$ is the usual spherical coordinate on $\mathbb{S}^2$.
Using that $\mathcal{N}_{0,r}$ is closed in $T_\C\mathcal{M}$ and $\Psi_N$ is continuous, we deduce that $\Psi_N$ has values in $\mathcal{N}_{0,r}$. Moreover, since $\Psi_N$ is a smooth proper injective immersion as a function with values in $T_\C\mathcal{M}$, it remains true as a function with values in the submanifold $\mathcal{N}_{0,r}$. Since $U(1)\times\R_t \times(r_0, +\infty)\times \left(\mathbb{S}^2\setminus{\left\{N\right\}}\right)$ has the same dimension as $\mathcal{N}_{0,r}$, we deduce that $\Psi_N$ defines local coordinates on $\mathcal{N}_{0,r}$. Moreover, we check easily that it trivializes the action of $U(1)$.

We also introduce the map $\Psi_S$ using the stereographic coordinates relative to the south pole $(x_S,y_S)$:
\[
\Psi_S:
\begin{cases}
U(1)\times \R_t \times(r_0, +\infty)\times \left(\mathbb{S}^2\setminus{\left\{S\right\}}\right) \rightarrow T\mathcal{M}\\
(e^{i\rho}, t, r, x_S, y_S)\mapsto \frac{e^{i\rho}}{\sqrt{2}\left(r+ia\frac{1-(x_S^2+y_S^2)}{1+x_S^2+y_S^2}\right)}\left(\frac{2ia(x_S+iy_S)}{1+x_S^2+y_S^2}\partial_t+\frac{(x_S^2+y_S^2+1)}{2}\left(\partial_{x_S}+i\partial_{y_S}\right)\right)
\end{cases}.
\]
On $U(1)\times \R_t \times(r_0, +\infty)\times \left(\mathbb{S}^2\setminus{\left\{N,S\right\}}\right)$, we have:
\[\Psi_m = \frac{x_S-iy_S}{\sqrt{x_S^2+y_S^2}}\Psi_S = e^{-i\phi}\Psi_S. \]
As previously for $\Psi_N$, we deduce that $\Psi_S$ defines local coordinates on $\mathcal{N}_{0,r}$ and trivializes the action of $U(1)$. 

Note that (for $\omega \in \mathbb{S}^2\setminus{\left\{N,S\right\}}$), \[\Psi_N (e^{i\rho},t,r, \omega) = \frac{x_S-iy_S}{x_S+iy_S}\Psi_S(e^{i\rho}, t,r \omega) = \frac{x_N-iy_N}{x_N+iy_N}\Psi_S(e^{i\rho},t,r, \omega)\] and we deduce \[\Psi_S^{-1}\Psi_N(e^{i\rho},t,r,\omega) = \left(\frac{x_N-iy_N}{x_N+iy_N}e^{i\rho},t,r, \omega\right)\]

The complete system of local trivializations $(\Psi_N, \Psi_S)$ enables us to show easily the following proposition:
\begin{prop} There is no global continuous section of $\mathcal{N}_{0,r}$ (in other words, $\mathcal{N}_{0,r}$ is not the trivial bundle $U(1)\times\R_t\times(r_0,+\infty)\times\mathbb{S}^2$)
\end{prop}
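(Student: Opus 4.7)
My plan is to obtain a contradiction by restricting a hypothetical global section to a spherical slice and detecting a winding-number obstruction coming from the transition function between $\Psi_N$ and $\Psi_S$.

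First I would fix arbitrary values $t_0\in\R$ and $r_0'\in(r_0,+\infty)$ and restrict everything to $\{t_0\}\times\{r_0'\}\times\mathbb{S}^2$. A global continuous section $s$ of $\mathcal{N}_{0,r}$ would restrict to a continuous section of the $U(1)$-bundle $\mathcal{N}_{0,r}$ over this copy of $\mathbb{S}^2$. In the two trivializations, $s$ is encoded by two continuous maps $f_N:\mathbb{S}^2\setminus\{N\}\to U(1)$ and $f_S:\mathbb{S}^2\setminus\{S\}\to U(1)$ such that $\Psi_N(f_N(\omega),t_0,r_0',\omega)=s=\Psi_S(f_S(\omega),t_0,r_0',\omega)$ on the overlap $\mathbb{S}^2\setminus\{N,S\}$. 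Using the transition formula $\Psi_S^{-1}\Psi_N(e^{i\rho},t,r,\omega)=\bigl(\tfrac{x_N-iy_N}{x_N+iy_N}e^{i\rho},t,r,\omega\bigr)$ established above, this compatibility reads
\[
f_S(\omega)=\frac{x_N-iy_N}{x_N+iy_N}\,f_N(\omega)\quad\text{on }\mathbb{S}^2\setminus\{N,S\}.
\]

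Next I would evaluate this relation along the equator and compute a winding number. On the equator $\theta=\pi/2$, stereographic projection from the north pole gives $x_N+iy_N=e^{i\phi}$, hence $\tfrac{x_N-iy_N}{x_N+iy_N}=e^{-2i\phi}$. Therefore the equatorial loop $\gamma:\phi\in[0,2\pi]\mapsto(\theta=\pi/2,\phi)$ satisfies $f_S\circ\gamma=e^{-2i\phi}\,(f_N\circ\gamma)$ as loops in $U(1)$, so
\[
\deg(f_S\circ\gamma)=\deg(f_N\circ\gamma)-2.
\]

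The contradiction now comes from the fact that both degrees must vanish. Indeed, the open set $\mathbb{S}^2\setminus\{N\}$ (resp.\ $\mathbb{S}^2\setminus\{S\}$) is contractible, so its continuous map $f_N$ (resp.\ $f_S$) into $U(1)$ is null-homotopic, and hence its restriction to the equator has winding number $0$. Substituting $\deg(f_N\circ\gamma)=\deg(f_S\circ\gamma)=0$ into the identity above gives $0=-2$, a contradiction. Therefore no such global continuous section exists, and equivalently the $U(1)$-principal bundle $\mathcal{N}_{0,r}$ is non-trivial. The only genuinely non-routine step is identifying the transition cocycle on the equator; everything else is a standard clutching/winding argument and the explicit form of $\Psi_S^{-1}\Psi_N$ makes this computation immediate.
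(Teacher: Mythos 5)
Your proof is correct and follows essentially the same approach as the paper: both detect the degree-$2$ (first Chern class) obstruction in the transition function $\Psi_S^{-1}\Psi_N$ after restricting to a spherical slice. The paper realizes the argument through an explicit radial homotopy $g(r,\omega)=f_1(r\omega)$ interpolating a constant loop and a degree-$2$ loop, while you state the standard clutching-function version directly (null-homotopy of $f_N$, $f_S$ on their contractible domains versus the degree-$(-2)$ transition on the equator); these are two phrasings of the same winding-number contradiction.
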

\begin{proof}
We argue by contradiction. Let's assume the existence of a global continuous section $f$. Then we construct $f_1:=\text{pr}_{U(1)}\Psi_N^{-1}\circ f_{|_{\left\{0\right\}\times\left\{r_0+1\right\}\times \mathbb{S}^2\setminus{\left\{N\right\}}}}$ which is continuous ($\text{pr}_{U(1)}$ being the projection on $U(1)$). Using the stereographic coordinates relative to the north pole on $\left\{0\right\}\times\left\{r_0+1\right\}\times \mathbb{S}^2\setminus{\left\{N\right\}}$, we can see $f_1$ as a function from $\R^2$ to $U(1)$. Using the same construction with respect to the south pole ($f_2=\text{pr}_{U(1)}\Psi_S^{-1}\circ f_{|_{\left\{0\right\}\times\left\{r_0+1\right\}\times \mathbb{S}^2\setminus{\left\{S\right\}}}}$) and the identification using stereographic coordinates relative to the south pole, we obtain a continuous function $f_2$ from $\R^2$ to $U(1)$. The two constructions overlap and going through the various identifications, we get the relation $f_1(x,y) = \frac{(x+iy)^2}{x^2+y^2}f_2(\frac{x}{x^2+y^2}, \frac{y}{x^2+y^2})$ on $\R^2\setminus{0}$. We define $g:(0,+\infty)\times U(1) \rightarrow U(1)$ by $g(r, \omega):= f_1(r\omega)$. Since $f_1$ and $f_2$ are continuous at $(0,0)$, $g$ can be continuously extended by $g(0,\omega) = f_1(0)$ and $g(+\infty, \omega) = \omega^2f_2(0)$. This extension is a homotopy between two loops with different indices hence we have a contradiction and there is no global continuous section.
\end{proof}
\begin{remark}
A global continuous section on $\mathcal{N}_0$ composed with the projection of $\mathcal{N}_0$ onto $\mathcal{N}_{0,r}$ provides a global continuous section on $\mathcal{N}_{0,r}$. Therefore there is no global continuous section of $\mathcal{N}_0$ either. Similarly (using the map $d$) there is no global continuous section of $\mathcal{A}_0$ and $\mathcal{A}_{0,r}$.
\end{remark}

From the complete system of local trivialization on $\mathcal{N}_0$, we can deduce a complete system of local trivialization on $\mathcal{A}_0$. Indeed, the previous discussion shows that we have the sections $s_N = (l,n,e^{-i\phi}m)$ (smooth on $\R_t\times(r_0,+\infty)\times \mathbb{S}^2\setminus\left\{N\right\}$) and $s_S = (l,n,e^{i\phi}m)$ (smooth on $\R_t\times(r_0,+\infty)\times \mathbb{S}^2\setminus\left\{S\right\}$) of $\mathcal{N}_0$. Since $\R_t\times(r_0,+\infty)\times \mathbb{S}^2\setminus\left\{N\right\}$ is simply connected and $d$ is a double covering map, we have exaclty two lifts of $s_N$ as a local smooth section of $\mathcal{A}_0$ on $\R_t\times(r_0,+\infty)\times \mathbb{S}^2\setminus\left\{N\right\}$. We fix a choice $(o_N, \iota_N)$ of such a section. Then we define $(o, \iota)_S = (e^{i\phi}o_N, e^{-i\phi}\iota_N)$ which is smooth on $\R_t\times(r_0,+\infty)\times \mathbb{S}^2\setminus\left\{S,N\right\}$ and we want to prove that $(o,\iota)_S$ extends smoothly at $\R_t\times (r_0, +\infty)\times\mathbb{S}^2\setminus \left\{S\right\}$. To show that, we first check that $d\circ (o, \iota)_S = s_S$ on $\R_t\times(r_0,+\infty)\times \mathbb{S}^2\setminus\left\{S,N\right\}$. Let $x \in \R_t\times(r_0,+\infty)\times\mathbb{S}^2\setminus\left\{N,S\right\}$  we know that there exists a unique smooth lift $\tilde{s}_S$ of $s_S$ with value $(o,\iota)_S(x)$ at $x$. The set such that $\tilde{s}_S = (o,\iota)_S$ is open (we check that in an open set of trivialization containing a point $x_0$ such that $\tilde{s}_S(x_0) = (o,\iota)_S(x_0)$), closed (as a subset of $\R_t\times(r_0,+\infty)\times \mathbb{S}^2\setminus\left\{S,N\right\}$) by continuity of $\tilde{s}_S$ and $(o,\iota)_S$ and non empty. Therefore $(o,\iota)_S = \tilde{s}_S$. 
Finally, these two sections provide a complete system of trivializations of $\mathcal{A}_0$:
\begin{align*} A_N^{-1}: \begin{cases}\C^*\times \R_t\times (r_0, +\infty)\times \mathbb{S}^2\setminus \left\{N\right\}\rightarrow \mathcal{A}_0 \\
(z,x) \mapsto (zo_N(x),z^{-1}\iota_N(x))
\end{cases}\\
A_S^{-1}: \begin{cases}
\C^*\times \R_t\times (r_0,+\infty)\times \mathbb{S}^2\setminus\left\{S\right\} \rightarrow \mathcal{A}_0\\
(z,x) \mapsto (zo_S(x), z^{-1}\iota_S(x))
\end{cases}
\end{align*}
 with change of trivializations given by the map $A_N A_S^{-1}(z,x) = (e^{i\phi}z, x)$.
Note that there is no lift of the local section $(l,n,m)$ smooth on $\R_t\times (r_0, +\infty)\times \mathbb{S}^2\setminus\left\{N,S\right\}$. Indeed, the local section of $\mathcal{A}_0$ $(o_m,\iota_m):= (e^{i\frac{\phi}{2}}o_N, e^{-i\frac{\phi}{2}}\iota_N)$ defined on $\R_t\times (r_0, +\infty)\times \mathbb{S}^2\setminus\left\{\phi = 0\right\}$ is such a lift but it does not extend continuously to $\R_t\times (r_0, +\infty)\times \mathbb{S}^2\setminus\left\{N,S\right\}$. We denote by $A_m$ the local trivialization of $\mathcal{A}_0$ associated to $(o_m, \iota_m)$. It is traditionally used to write the Teukoslky operator.

\begin{remark}[Bundles on an extended Kerr space-time]
\label{extendedBundles}
We can also compute local trivializations of $\mathcal{A}_{0}$ and $\mathcal{N}_{0}$ on a larger Kerr space time. More precisely, using Kerr star coordinates $(t_*, r, \omega_*)$, we can extend the Kerr metric on a larger space-time which is given by $\mathcal{M}_{ext}:=\R_{t^*}\times (r_0 - \epsilon, +\infty)_{r}\times \mathbb{S}^2_{\omega_*}$. There are very few modifications with respect to the computations on the exterior. The main thing to note is that we cannot make the same choice of $l$ and $n$ as previously since they do not extend smoothly across the future horizon $\mathcal{H}=\left\{r = r_0\right\}$. Thus we renormalize them:
\begin{align*}
\tilde{l} &= \Delta_r l \\
\tilde{n} &= \Delta_r^{-1} n
\end{align*}
and extend them as principal independent future oriented principal null vector fields.
However the vector field $m$ extends smoothly to $\R_{t^*}\times (r_0 - \epsilon, +\infty)_{r}\times \left(\mathbb{S}^2_{\omega_*}\setminus \left\{N,S\right\}\right)$. Indeed we have in Kerr star coordinates:
\[
m = \frac{ia\sin\theta_*}{\sqrt{2}p}\partial_{t_*} + \frac{1}{\sqrt{2}p}\partial_{\theta_*} + \frac{i}{\sqrt{2}p\sin\theta_*}\partial_{\phi_*}
\]
where $p = r+ia\cos\theta_* = r+ia\cos\theta$ as previously. This expression defines an extension of $m$ on $\mathcal{M}_{ext}$ such that for all $x\in \mathcal{M}_{ext}$, $(\tilde{l}(x),\tilde{n}(x),m(x))\in \mathcal{N}_0$. As previously, we can compute explicitly a complete system of local trivializations of the bundles $\mathcal{N}_{0,r}$, $\mathcal{A}_{0,r}$, $\mathcal{N}_{0}$ and $\mathcal{A}_0$.
\end{remark}

\subsection{Link with the Hopf fibration}
In this section, we see $\mathbb{S}^3$ as the unit quaternions group that is to say \[\mathbb{S}^3 := \left\{ a+bi+cj+dk, (a,b,c,d)\in \R^4: a^2+b^2+c^2+d^2=1\right\}\subset \mathbb{H}.\] For $h\in \mathbb{S}^3$, the subset of imaginary quaternions $I=\left\{ bi+cj+dk: (b,c,d)\in \R^3\right\}$ is stable by  the map $c_h:h'\mapsto hh'h^*$ ($h^*$ is the conjugate of $h$ in the sense of quarternions and since $h \in \mathbb{S}^3$, $h^* = h^{-1}$). The map $c_h$ is even an orthogonal map for the usual norm on $I$ since $(hh'h^*)(hh'h^*)^* = hh'h'^*h^* = \left\|h'\right\|^2$. Therefore, $\Psi:h\mapsto c_h$ defines a Lie group morphism from $\mathbb{S}^3$ to $O(3)$ (note that $I$ is identified with $\R^3$ by sending $(i,j,k)$ to the canonical basis). Since $\mathbb{S}^3$ is connected and $c_1 = Id$, we have that $\Psi(\mathbb{S}^3) \subset SO(3)$. Finally, we compute the kernel of $\Psi$. Let $h$ be such that $c_h = Id$. Then for all $h'\in I$, we have:
\begin{align*}
hh'h^* = h'\\
hh' = h'h &\text{ (since $h^*h = 1$)}
\end{align*}
Therefore $h$ commute with every element of $I$. But $I+Z(\mathbb{H}) = \mathbb{H}$ (where $Z(\mathbb{H}) = \R$ is the center of $\mathbb{H}$. Therefore, $h\in Z(\mathbb{H})$. Finally, since $h\in \mathbb{S}^3$, we find $h = 1$ or $h=-1$.
Therefore, $\Psi$ is a Lie group morphism between two connected Lie groups with finite kernel of size two. Therefore it is a double covering map. We can identify $SO(3)$ with $\mathfrak{O}_{\mathbb{S}^2}$, the bundle of oriented orthonormal frame on $\mathbb{S}^2$ (we identify a matrix in $SO(3)$ with columns $C_1$, $C_2$ and $C_3$ with the basis $(C_2, C_3)\in T_{C_1}\mathbb{S}^2$). We denote by $\pi_{\mathbb{S}^2}: SO(3)\rightarrow \mathbb{S}^2$ the projection when $SO(3)$ is seen as the bundle of oriented orthonormal frame on $\mathbb{S}^2$ (therefore $\pi_{\mathbb{S}^2}(M)$ is the first column of the matrix with the identification that we have chosen).

The Hopf fibration can be defined as $H = \pi_{\mathbb{S}^2}\circ\Psi: \mathbb{S}^3\rightarrow \mathbb{S}^2$ (which is a smooth submersion). Let $u\in \mathbb{S}^2\subset I$, by definition $H^{-1}(u)= \left\{ h\in \mathbb{S}^3: hih^* = u\right\}$. We see that we have a right smooth fiber preserving action of $U(1) := \mathbb{S}^3\cap (\R+i\R)$ on $\mathbb{S}^3$ (given by right multiplication). Moreover, this action is simply transitive on each fiber since if $h_1, h_2 \in H^{-1}(u)$, then $g:=h_1^{-1}h_2$ is the only elements of $\mathbb{H}$ such that $h_1g = h_2$ and it belongs to $U(1)$ (indeed it commutes with $i$ and has norm 1). Therefore, the Hopf fibration is a principal $U(1)$-bundle. There is a unique action of $U(1)$ on $\mathfrak{O}_{\mathbb{S}^2}$ such that for all $x \in \mathbb{S}^3$ and $g\in U(1)$, $\Psi(x\cdot g) = \Psi(x)\cdot g^2$ and it is defined by $c_h\cdot g = c_{hg'} = c_{-hg'}$ for any $g'$ such that $g'^2= g$. Writing this more explicitly, we see that for a matrix $M\in SO(3)$ with columns $C_1, C_2, C_3$ , $ M = \Psi(h)$ for some $h \in \mathbb{S}^3$ with $\underset{(i,j,k)}{\text{Mat}}hih^* = C_1$, $\underset{(i,j,k)}{\text{Mat}}hjh^* = C_2$ and $\underset{(i,j,k)}{\text{Mat}}hkh^*= C_3$. Then if $g = \cos(\rho) +i\sin(\rho) \in U(1)$ and $g' = \cos\left(\frac{\rho}{2}\right)+i\sin\left(\frac{\rho}{2}\right)$, $M\cdot g$ has columns 
\begin{align*}\underset{(i,j,k)}{\text{Mat}} hg' i (hg')^* &= C_1\\
\underset{(i,j,k)}{\text{Mat}} hg' j (hg')^* &= \underset{(i,j,k)}{\text{Mat}}\left((\cos^2\left(\frac{\rho}{2}\right)-\sin^2\left(\frac{\rho}{2}\right)) hjh^* + 2\sin\left(\frac{\rho}{2}\right)\cos\left(\frac{\rho}{2}\right) hkh^*\right)\\
&= \cos(\rho)C_2 + \sin(\rho)C_3\\
\underset{(i,j,k)}{\text{Mat}} hg'k(hg')^* &= -\sin(\rho)C_2 + \cos(\rho)C_3. 
\end{align*}
The map $Id_{\R_t\times(r_0,+\infty)}\times H$ enables to put a structure of $U(1)$ principal bundle on $\R_t\times(r_0,+\infty)\times \mathbb{S}^3$ and similarly, we put a structure of $U(1)$ principal bundle on $\R_t\times(r_0,+\infty)\times \mathfrak{O}_{\mathbb{S}^2}$.

\begin{prop}
We define the map:
\[
f:
\begin{cases}
\R_t\times(r_0, +\infty)\times\mathfrak{O}_{\mathbb{S}^2}\rightarrow T_\C\mathcal{M} \\
(t,r,(X,Y)\in (\mathfrak{O}_{\mathbb{S}^2})_\omega) \mapsto -\frac{ia \left<X+iY, e_3\right>_{\R^3}}{\sqrt{2}p}\partial_t+\frac{1}{\sqrt2 p}X+\frac{i}{\sqrt{2}p}Y\in T_{(t,r,\omega)}\mathcal{M}
\end{cases}
\]
where elements of $\mathbb{S}^2$ (resp. $T\mathbb{S}^2$) are represented by unit vectors (resp. triple of unit vectors) in $\R^3$ (and $e_1, e_2, e_3$ is the canonical basis of $\R^3$) and $p = r+i\cos(\theta)$. The notation $\left< . \right>_{\R^3}$ denotes the canonical scalar product on $\R^3$ extended to a $\C$-bilinear form on $\C^3$ (therefore it is not hermitian).

We have $f:\R_t\times(r_0, +\infty)\times\mathfrak{O}_{\mathbb{S}^2}\rightarrow \mathcal{N}_{0,r}$ and it is an isomorphism of principal bundle.
\end{prop}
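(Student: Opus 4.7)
My plan is to verify the two claims — that $f$ takes values in $\mathcal{N}_{0,r}$ and that it is an isomorphism of $U(1)$-principal bundles — by matching $f$ to the known local section $m$ at a canonical frame and then extending by $U(1)$-equivariance. The formula defining $f$ is set up precisely so that it reproduces $m$ when evaluated on the standard spherical frame; equivariance then handles everything else.

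Concretely, for $\omega \in \mathbb{S}^2 \setminus \{N, S\}$ with spherical coordinates $(\theta, \phi)$, I would choose the canonical positively oriented orthonormal frame $(X_0, Y_0) = (\partial_\theta, \frac{1}{\sin\theta}\partial_\phi)$, viewed as unit vectors in $\R^3$. A direct computation gives $X_0 + iY_0 = (\cos\theta\cos\phi - i\sin\phi, \cos\theta\sin\phi + i\cos\phi, -\sin\theta)$, so $\langle X_0 + iY_0, e_3\rangle_{\R^3} = -\sin\theta$; substituting into the formula for $f$ recovers exactly expression \eqref{vectorM} for $m$, giving $f(t, r, (X_0, Y_0)) = m(t, r, \omega) \in (\mathcal{N}_{0,r})_{(t,r,\omega)}$. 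Next, the explicit $U(1)$-action on $\mathfrak{O}_{\mathbb{S}^2}$ computed just before the proposition rotates the frame by angle $\rho$ for $g = e^{i\rho}$, so that $(X \cdot g) + i(Y \cdot g) = e^{-i\rho}(X + iY)$. Since $f$ depends $\C$-linearly on $X + iY$ in both its $\partial_t$-coefficient and its spatial part, this yields $f((X, Y) \cdot g) = e^{-i\rho} f(X, Y)$; the $U(1)$-action on $\mathcal{N}_{0,r}$ sends $m$ to $e^{i\rho} m$, so $f$ intertwines the two actions (up to the involutive automorphism $g \mapsto g^{-1}$ of $U(1)$, which still gives an isomorphism of principal bundles). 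Combined with the matching at $(X_0, Y_0)$ and the $U(1)$-invariance of $\mathcal{N}_{0,r}$, this shows $f$ has image in $\mathcal{N}_{0,r}$ over $\R_t \times (r_0, +\infty) \times (\mathbb{S}^2 \setminus \{N, S\})$; the values at the poles are handled either by continuity of $f$ together with closedness of the Newman--Penrose conditions, or by an analogous direct verification at an explicit oriented frame such as $(e_1, e_2)$ at $N$. Finally, $f$ is smooth and fiber-preserving by construction, and the equivariance combined with the simple transitivity of $U(1)$ on each fiber of both bundles forces $f$ to be a fiberwise bijection, hence a principal bundle isomorphism.

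The main obstacle is the concrete matching $f(X_0, Y_0) = m$, which requires carefully translating between tangent vectors to $\mathbb{S}^2$ viewed as vectors in $\R^3$ (the setting in which $f$ is defined) and their expression in $(\theta, \phi)$ coordinates (the setting in which $m$ is written), while tracking how the coefficient $p = r + ia\cos\theta$ arises naturally from both the normalization in $f$ and the pairing $\langle X_0 + iY_0, e_3\rangle_{\R^3}$. Once this pointwise identification is secured, the remaining steps follow from symmetry and a standard principal-bundle argument.
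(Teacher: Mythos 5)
Your proof is correct, but it takes a genuinely different route from the paper's, and the comparison is instructive. The paper's proof verifies directly, by expanding the Kerr metric in the given coordinates, that $f(t,r,(X,Y))$ satisfies all the defining conditions $g(m,m)=g(m,\Delta_r l)=g(m,\Delta_r^{-1}n)=0$ and $g(m,\overline m)=-1$ for \emph{every} frame $(X,Y)$, then exhibits an explicit smooth inverse for $f$ before checking $U(1)$-compatibility. Your proof instead matches $f$ against the already-established section $m$ of \eqref{vectorM} at the single spherical frame $(\partial_\theta,\sin^{-1}\theta\,\partial_\phi)$ (your computation $\langle X_0+iY_0,e_3\rangle_{\R^3}=-\sin\theta$ and the resulting match with \eqref{vectorM} is correct), propagates to all frames away from the poles by equivariance, and fills in the poles by closedness of the Newman--Penrose conditions together with continuity of $f$ — exactly the argument the paper uses earlier for $\Psi_N$. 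This is more conceptual and avoids redoing the metric computations, but note that it relies on the paper's earlier unproved assertion that \eqref{vectorM} defines a section of $\mathcal{N}_{0,r}$, whereas the paper's proof of this proposition is in effect simultaneously a proof of that assertion; your argument is therefore less self-contained.

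One point worth flagging, and it is in your favor: your equivariance computation $(X\cdot g)+i(Y\cdot g)=e^{-i\rho}(X+iY)$, hence $f((X,Y)\cdot e^{i\rho})=e^{-i\rho}f(X,Y)$, is correct as you have set it up. The paper's proof writes $(X,Y)\cdot e^{i\rho}=\bigl(\Re(e^{i\rho}(X+iY)),\Im(e^{i\rho}(X+iY))\bigr)$, which does not agree with the rotation $(\cos\rho\,X+\sin\rho\,Y,\,-\sin\rho\,X+\cos\rho\,Y)$ computed just before the proposition (the exponent should be $e^{-i\rho}$). So the intertwining is over the inversion automorphism of $U(1)$, as you say; for an abelian structure group this still yields a principal bundle isomorphism (equivalently, one may absorb the sign into the choice of convention for the $U(1)$-action on $\R_t\times(r_0,+\infty)\times\mathfrak{O}_{\mathbb{S}^2}$), so your conclusion stands. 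It would be cleaner to state explicitly that the isomorphism is over $g\mapsto g^{-1}$ rather than over the identity of $U(1)$.
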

\begin{proof}
Let $(t,r,\omega)\in \mathcal{M}$ and $(X,Y)\in \left(\mathfrak{O}_{\mathbb{S}^2}\right)_\omega$. To prove that $f$ has values in $\mathcal{N}_{0,r}$, we have to prove that $m:=-\frac{ia \left<X+iY, e_3\right>_{\R^3}}{\sqrt{2}p}\partial_t+\frac{1}{\sqrt2 p}X+\frac{i}{\sqrt{2}p}Y$ is null, orthogonal to any principal null vector and $g(m,\overline{m}) = -1$.

We can write the Kerr metric on the form:
\begin{align*}
g = \left(1-\frac{2Mr}{\rho^2}\right)\dd t^2 + \frac{4Mar}{\rho^2}\dd t (\sin^2\theta\dd \phi) - \frac{\rho^2}{\Delta_r}\dd r^2 - \rho^2 g_{\mathbb{S}^2} - a^2\left(1+\frac{2Mr}{\rho^2}\right)(\sin^2\theta\dd \phi)^2
\end{align*}
Moreover, if we see elements $\omega\in\mathbb{S}^2$ as units vectors $\begin{pmatrix}\omega_x \\ \omega_y \\ \omega_z\end{pmatrix}$ on $\R^3$, we have:
\begin{align*} \sin^2\theta\dd \phi = -\omega_y\dd x + \omega_x\dd y \\
\end{align*}
We can therefore compute:
\begin{align*}
2p^2g(m,m) = -a^2\left( \left<X+iY, e_3\right>^2+\left<X+iY, -\omega_y e_1 + \omega_x e_2\right>^2-\frac{2Mr}{\rho^2}\left(\left<X+iY, e_3\right>-i\left<X+iY, -\omega_ye_1+\omega_x e_2\right>\right)^2\right)
\end{align*}
But note that $-\omega_y e_1 + \omega_x e_2 = e_3\times\omega$, $\omega\times X = Y$ and $Y\times \omega = X$ (since $\omega, X, Y$ is a direct orthonormal basis of $\R^3$). Then, by definition of the cross product on $\R^3$, 
\begin{align*}
\left< X, e_3\times\omega \right> =& \det( X,e_3,\omega) \\
=& \det(e_3,\omega, X)\\ 
=& \left< \omega\times X, e_3\right>  \\
=& \left< Y, e_3\right>\\
\left< Y, e_3\times\omega\right> =& -\left<X, e_3\right>
\end{align*} 
Therefore, $\left<X+iY, e_3\times\omega\right> = \left<Y-iX, e_3\right> = -i\left<X+iY, e_3\right>$.
We deduce that $2p^2g(m,m) = 0$.

We compute also
\begin{align*}
g(\sqrt{2}p m,\Delta_r l) =& \left(1-\frac{2Mr}{\rho^2}\right)\left(-ia\left< X+iY, e_3\right>\right)(r^2+a^2) + \frac{2Ma^2r}{\rho^2}\sin^2\theta(-ia\left<X+iY, e_3\right>)\\
& + \frac{2Mar(r^2+a^2)}{\rho^2}\left<X+iY, e_3\times \omega\right>-a\rho^2\left<X+iY, e_3\times \omega\right>\\
&-a^3\left(1+\frac{2Mr}{\rho^2}\right)\left<X+iY, e_3\times \omega\right>\sin^2\theta\\
=& 0 \quad\text{(using $\left<X+iY, e_3\times\omega\right> = -i\left<X+iY, e_3\right>$)}
\end{align*}
Very similar computations show:
\begin{align*}
g(\sqrt{2}p m , \Delta_r^{-1} n) = 0\\
g(m,\overline{m}) = 1
\end{align*}
The previous computations show that $f$ has values in $\mathcal{N}_{0,r}$.
Moreover $f$ is smooth and so is its inverse:
\begin{align*}
f^{-1}: m\in (\mathcal{N}_{0,r})_{(t,r,\omega)}\mapsto (t,r, \sqrt{2}p(\Re(\text{pr}_{T_\omega\mathbb{S}^2, \partial_t}m), \Im(\text{pr}_{T_\omega\mathbb{S}^2, \partial_t}m)))
\end{align*}
where $\text{pr}_{\mathbb{S}^2, \partial_t}$ is the linear projection on $T_\omega\mathbb{S}^2$ parallel to $\partial_t$.
Therefore, it is a diffeomorphism. The compatibility with projections maps is immediate. It remains to prove that $f$ is compatible with the actions of $U(1)$. Let $e^{i\rho} \in U(1)$.
\begin{align*}
(\omega, X,Y)\cdot{e^{i\rho}} &= (\omega, \cos(\rho)X + \sin(\rho)Y, -\sin(\rho)Y+\cos(\rho)X)\\
&= (\omega, \Re(e^{i\rho} (X+iY)), \Im(e^{i\rho} (X+iY)))
\end{align*}
Therefore, if $(X,Y) \in T_{\omega}\mathbb{S}^2$:
\begin{align*}
f(t,r,(X,Y)\cdot{e^{i\rho}}) &= -\frac{ia \left<e^{i\rho}(X+iY), e_3\right>_{\R^3}}{\sqrt{2}p}\partial_t + e^{i\rho}(X+iY)\\
&= e^{i\rho}f(t,r,(X,Y))\\
&= f(t,r,(X,Y))\cdot e^{i\rho}
\end{align*}
\end{proof}

The double cover map $\tilde{d}:= f\circ\Psi$ satisfies $\tilde{d}(x\cdot g) = \tilde{d}(x)\cdot g^2$ for all $g\in U(1)$ and $x\in\R_t\times (r_0,+\infty) \times \mathbb{S}^3$ and the following diagram is commutative
\[
\begin{tikzcd}
\R_t\times (r_0,+\infty) \times \mathbb{S}^3 \arrow[d, "Id\times H"] \arrow[r, "\tilde{d}"] & \mathcal{N}_{0,r} \arrow[ld, "\pi_{\mathcal{N}_{0,r}}"]\\
\mathcal{M}
\end{tikzcd}
\] 

We see that $\tilde{d}:\R_t \times (r_0, +\infty) \times \mathbb{S}^3 \rightarrow \mathcal{N}_{0,r}$ is very similar to $d: \mathcal{A}_{0,r} \rightarrow \mathcal{N}_{0,r}$. Indeed, the two are isomorphic as we see in the following proposition:
\begin{prop}
Given $u \in \R_t \times (r_0, +\infty) \times \mathbb{S}^3$ and $v\in \mathcal{A}_{0,r}$ such that $\tilde{d}(u) = d(v)$, we have a unique isomorphism of principal bundles $\mathfrak{G}: \R_t \times (r_0, +\infty) \times \mathbb{S}^3 \rightarrow \mathcal{A}_{0,r}$ such that the following diagram is commutative:
\[
\begin{tikzcd}
\R_t \times (r_0, +\infty) \times \mathbb{S}^3 \arrow{rd}{\tilde{d}} \arrow{rr}{\mathfrak{G}} & & \mathcal{A}_{0,r} \arrow{ld}{d}\\
& \mathcal{N}_{0,r} &
\end{tikzcd}
\]
and such that $\mathfrak{G}(u) = v$.
\end{prop}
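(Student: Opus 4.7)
The plan is to construct $\mathfrak{G}$ as a lift of $\tilde{d}$ through the covering $d$, using the fact that $\mathbb{S}^3$ is simply connected and then to promote this lift to a principal bundle isomorphism via a uniqueness-of-lifts argument.

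First I would handle uniqueness and existence. Any $\mathfrak{G}$ satisfying $d\circ\mathfrak{G}=\tilde{d}$ is by definition a lift of $\tilde{d}$ through the double covering $d:\mathcal{A}_{0,r}\to\mathcal{N}_{0,r}$. Since the source $\R_t\times(r_0,+\infty)\times\mathbb{S}^3$ is connected, the uniqueness part of the covering space lifting theorem says any two such lifts coinciding at a single point coincide globally, which gives uniqueness once we fix $\mathfrak{G}(u)=v$. For existence, note that the source is simply connected (as $\mathbb{S}^3$ is simply connected and the other factors are contractible), so the lifting criterion guarantees a continuous lift $\mathfrak{G}$ with $\mathfrak{G}(u)=v$ (this makes sense because the assumption $\tilde{d}(u)=d(v)$ is precisely the necessary compatibility condition at the base point). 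Smoothness of $\mathfrak{G}$ follows from smoothness of $\tilde{d}$ combined with the fact that $d$ is a local diffeomorphism, so that locally $\mathfrak{G}=d^{-1}\circ\tilde{d}$.

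Next I would verify the principal bundle structure. Compatibility with the projections to $\mathcal{M}$ is automatic: $\pi_{\mathcal{A}_{0,r}}\circ\mathfrak{G}=\pi_{\mathcal{N}_{0,r}}\circ d\circ \mathfrak{G}=\pi_{\mathcal{N}_{0,r}}\circ\tilde{d}$, which coincides with the projection of the source onto $\mathcal{M}$ by the commutative diagram preceding the proposition. For the $U(1)$-equivariance, consider the two smooth maps
\[
\phi_1,\phi_2:\R_t\times(r_0,+\infty)\times\mathbb{S}^3\times U(1)\longrightarrow\mathcal{A}_{0,r}
\]
defined by $\phi_1(x,g):=\mathfrak{G}(x\cdot g)$ and $\phi_2(x,g):=\mathfrak{G}(x)\cdot g$. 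Using the relations $\tilde{d}(x\cdot g)=\tilde{d}(x)\cdot g^2$ and $d(v'\cdot g)=d(v')\cdot g^2$, both $\phi_1$ and $\phi_2$ are lifts through $d$ of the single map $(x,g)\mapsto\tilde{d}(x)\cdot g^2$. They agree at the point $(u,1)$, and the source is connected, so the uniqueness of lifts forces $\phi_1=\phi_2$, which is precisely the $U(1)$-equivariance.

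Finally I would conclude that $\mathfrak{G}$ is an isomorphism of principal bundles. Being a $U(1)$-equivariant fiber-preserving smooth map between two $U(1)$-principal bundles over the same base, it restricts to a $U(1)$-equivariant map between $U(1)$-torsors on each fiber, which is automatically a bijection; hence $\mathfrak{G}$ is bijective globally. Combined with the fact that $\mathfrak{G}$ is a local diffeomorphism (because $d\circ\mathfrak{G}=\tilde{d}$ and both $d,\tilde{d}$ are local diffeomorphisms), this shows $\mathfrak{G}$ is a diffeomorphism, hence a principal bundle isomorphism. The main delicate step in this plan is the equivariance argument: one must be careful to recognize $\phi_1$ and $\phi_2$ as lifts of the \emph{same} map, which only works because the two actions on $\mathcal{A}_{0,r}$ and on the source project to the squared action on $\mathcal{N}_{0,r}$, and to check that the source of the lifting argument is connected so that uniqueness of lifts can actually be invoked.
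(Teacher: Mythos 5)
Your proof is correct and rests on the same core principle as the paper---equivariance of $\mathfrak{G}$ is forced by uniqueness of lifts through a covering map on a connected domain---but you execute it more directly. Where the paper passes to local trivializations $\Psi_1,\Psi_2,\Phi$, extracts a $U(1)$-valued function $\gamma$ satisfying $\gamma(x,a)^2=a^2$ and $\gamma(x,1)=1$, and then invokes uniqueness of lifts through $a\mapsto a^2: U(1)\to U(1)$, you skip the descent to the structure group entirely: you compare the two global maps $\phi_1(x,g)=\mathfrak{G}(x\cdot g)$ and $\phi_2(x,g)=\mathfrak{G}(x)\cdot g$, observe via $\tilde d(x\cdot g)=\tilde d(x)\cdot g^2$ and $d(v'\cdot g)=d(v')\cdot g^2$ that both are lifts of $(x,g)\mapsto\tilde d(x)\cdot g^2$ through $d$, and conclude $\phi_1=\phi_2$ from uniqueness of lifts on the connected domain $\R_t\times(r_0,+\infty)\times\mathbb{S}^3\times U(1)$ plus agreement at $(u,1)$. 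This global argument is shorter and avoids the mild bookkeeping of checking that $\Psi_2^{-1}\circ\mathfrak G\circ\Psi_1$ covers the identity on the base; what the paper's local version buys in exchange is an explicit coordinate description of $\mathfrak G$ in trivializations, which is the style it uses throughout. Your closing remarks on bijectivity and smooth invertibility (equivariant torsor maps are bijections, and $\mathfrak G$ is a local diffeomorphism since $d$ and $\tilde d$ are) are correct and slightly more explicit than the paper's, which leaves them implicit once the equivariance and projection compatibility are established.
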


\begin{proof}
Since $\R_t \times (r_0, +\infty) \times \mathbb{S}^3$ is simply connected and $d$ is a covering map, $\tilde{d}$ admits a unique lift through $d$ to a smooth map $\mathfrak{G}: \R_t \times (r_0, +\infty) \times \mathbb{S}^3\rightarrow \mathcal{A}_{0,r}$ such that $\mathfrak{G}(u) = v$. A priori $\mathfrak{G}$ is only a smooth map. It remains to show that $\mathfrak{G}$ is in fact an isomorphism of principal bundles. First note that for all $y\in \R_t \times (r_0, +\infty) \times \mathbb{S}^3$, we have $(Id\times H)(y) = \pi_{\mathcal{A}_{0,r}}(\mathfrak{G}(y))$ (using the commutative diagrams).

Let $V$ be a small open subset of $\mathcal{M}$ and $y: V\rightarrow \R_t \times (r_0, +\infty) \times \mathbb{S}^3$ be a local section of $\R_t \times (r_0, +\infty) \times \mathbb{S}^3$. Then $\mathfrak{G}\circ y$ is a local section of $\mathcal{A}_{0,r}$ and $\tilde{d}\circ y$ is a local section of $\mathcal{N}_{0,r}$.  Let $\Psi_1$ be the local trivialization of $\R_t\times(r_0,+\infty)\times\mathbb{S}^3$ such that $\Psi_1(x, e^{i\rho}) = e^{i\rho}\cdot y(x)$, $\Psi_2$ the local trivialization of $\mathcal{A}_{0,r}$ such that $\Psi_2(x, e^{i\rho}) = e^{i\rho}\cdot\mathfrak{G}(y(x))$ and $\Phi$ the local trivialization of $\mathcal{N}_{0,r}$ such that $\Phi(x, e^{i\rho}) = e^{i\rho}\cdot \tilde{d}(y(x))$. Then for $x \in V$ and $a\in U(1)$, $\Psi_2^{-1}\circ\mathfrak{G} \circ \Psi_1(x,a) = (x, \gamma(x,a))$ where $\gamma$ is the unique continuous $U(1)$-valued function such that $\gamma(x,a)\cdot \mathfrak{G}(y(x))= \mathfrak{G}(a \cdot y(x))$. In particular $\gamma(x, 1) = 1$. Moreover, $\Phi^{-1}\circ \tilde{d} \circ \Psi_1 (x, a) = \Phi^{-1}\circ d \circ \Psi_2(x,a) = (x, a^2)$. For $x\in V$ and $a\in U(1)$:
\begin{align*}
\Phi^{-1}\circ d \circ \mathfrak{G}\circ \Psi_1 (x, a) &= \Phi^{-1}\circ d \circ \Psi_2\circ \Psi_2^{-1}\circ \mathfrak{G}\circ \Psi_1 (x,a)\\
&= (x, \gamma(x,a)^2)\\
\end{align*} 
And we also have
\begin{align*}
\Phi^{-1}\circ d \circ \mathfrak{G} \circ \Psi_1 (x, a) &= \Phi^{-1} \circ \tilde{d}\circ \Psi_1(x,a)\\
 &= (x, a^2)
\end{align*}
We deduce $\gamma(x,a)^2 = a^2$.

Then the following diagram is commutative:
\[
\begin{tikzcd}
& U(1) \arrow{d}{a\mapsto a^2} \\
V\times U(1) \arrow{ur}{\gamma} \arrow{r}{(x,a)\mapsto a^2} & U(1) 
\end{tikzcd}
\]
Because $a\mapsto a^2$ is a covering map, we have the uniqueness of such a continuous lift with $\gamma(x,1)=1$. We deduce $\gamma = \text{pr}_{U(1)}$. This proves that $\mathfrak{G}$ is an isomorphism of principal bundles.

\end{proof}
\begin{remark}
The previous proposition shows that there are exactly two choices $\mathfrak{G}_1$ and $\mathfrak{G}_2$ for the isomorphism and we have for all $a \in \R_t \times (r_0, +\infty) \times \mathbb{S}^3$, $\mathfrak{G}_1(a) = \mathfrak{G}_2(a\cdot(-1))$. We choose one of the two and call it $\mathfrak{G}$.
\end{remark}
Thanks to the previous proposition, we have now a concrete description of $\mathcal{A}_{0,r}$ and we can use it to define spin weighted functions as in \cite{dafermos2019boundedness} (section 2.2). The concrete description avoids the reference to spin frames.

\subsection{Stationarity}
In this section, we introduce the notion of a trivial (vector or principal) bundle with respect to a factor in a product decomposition and we apply this notion to the bundle $\mathcal{B}(s,s)$ in the Kerr case.
We consider a manifold $\mathcal{M}$ with a product decomposition $\Psi: \mathcal{M} \rightarrow \mathcal{X}\times \mathcal{Y}$ ($\Psi$ being a fixed diffeomorphism).
\begin{definition}
We say that a (vector or principal) bundle $p_E:E\rightarrow \mathcal{M}$ is trivial with respect to $\mathcal{X}$ in the decomposition given by $\Psi$ if there exists an isomorphism $f$ of (vector or principal) bundle over $\Psi$ between $E$ and the bundle $\mathcal{X}\times F$ (by definition it is the product of the trivial bundle $Id:\mathcal{X}\rightarrow \mathcal{X}$ and some bundle $p_{F}:F\rightarrow \mathcal{Y}$). In particular, we have the following commutative diagram:
\[
\begin{tikzcd}
E \arrow{d}{p_E} \arrow{r}{f} & \mathcal{X}\times F\arrow{d}{Id_{\mathcal{X}}\times p_F}\\
\mathcal{M} \arrow{r}{\Psi} & \mathcal{X}\times\mathcal{Y}
\end{tikzcd}
\]
We will say that $f$ is a semi-trivialization of the bundle.
\end{definition}
\begin{remark}
It is equivalent to say that $E$ is (isomorphic to) the pullback of a bundle $F$ on $\mathcal{Y}$ by the second projection (indeed, this pullback bundle is exactly the bundle $\mathcal{X}\times F$).
\end{remark}
\begin{remark}
In the following we fix the identification between $\mathcal{M}$ and $\Psi$. Therefore we will assume $\mathcal{M} = \mathcal{X}\times\mathcal{Y}$.
\end{remark}
\begin{remark}
\label{changeSemiTriv}
If $f: E\rightarrow \mathcal{X}\times F$ and $f': E\rightarrow \mathcal{X}\times F'$ are two semi-trivializations, we have $f\circ f'^{-1}(x,z) = (x,\gamma(z))$ where $\gamma: F'\rightarrow F$ is an isomorphism of vector (or principal) bundles.
\end{remark}

\begin{prop}
\label{contractileTrivial}
Let $E$ be a finite rank vector bundle over $\mathcal{M} = \mathcal{X}\times\mathcal{Y}$ ($\mathcal{M}$ is paracompact since it is a smooth manifold).
If $\mathcal{X}$ is contractile, then $E$ is trivial with respect to $\mathcal{X}$.
\end{prop}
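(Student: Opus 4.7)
The plan is to apply the homotopy invariance of vector bundles: if two smooth (or even continuous) maps between paracompact spaces are homotopic, then they pull back any given vector bundle to isomorphic bundles. This is the standard classification-theoretic input; see for instance Husemoller's \textit{Fibre Bundles}, Chapter~4.

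First I would fix a point $x_0\in \mathcal{X}$, and use the contractibility of $\mathcal{X}$ to produce a (continuous) homotopy $H:\mathcal{X}\times[0,1]\rightarrow\mathcal{X}$ with $H(\cdot,0)=\mathrm{Id}_{\mathcal{X}}$ and $H(\cdot,1)\equiv x_0$. I would then lift this to a homotopy on $\mathcal{M}=\mathcal{X}\times\mathcal{Y}$ by
\[
\tilde{H}:\mathcal{M}\times[0,1]\rightarrow\mathcal{M},\qquad \tilde{H}((x,y),t)=(H(x,t),y),
\]
so that $\tilde{H}(\cdot,0)=\mathrm{Id}_{\mathcal{M}}$ and $\tilde{H}(\cdot,1)=q$, where $q:\mathcal{M}\rightarrow\mathcal{M}$ is the map $(x,y)\mapsto (x_0,y)$.

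Next I would set $F:=E|_{\{x_0\}\times \mathcal{Y}}$, viewed as a vector bundle over $\mathcal{Y}$ via the natural identification $\{x_0\}\times \mathcal{Y}\simeq \mathcal{Y}$. Observe that the map $q$ factors as $\mathcal{M}\xrightarrow{\mathrm{pr}_{\mathcal{Y}}}\mathcal{Y}\xrightarrow{\iota}\mathcal{M}$, where $\iota(y)=(x_0,y)$. Consequently,
\[
q^*E=\mathrm{pr}_{\mathcal{Y}}^{*}\,\iota^*E=\mathrm{pr}_{\mathcal{Y}}^{*}F,
\]
and the pullback bundle $\mathrm{pr}_{\mathcal{Y}}^{*}F$ is canonically isomorphic to the product bundle $\mathcal{X}\times F\rightarrow \mathcal{X}\times\mathcal{Y}$. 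Applying homotopy invariance to $\tilde{H}$ yields $E=\mathrm{Id}_{\mathcal{M}}^{*}E\simeq q^{*}E\simeq \mathcal{X}\times F$, which produces the desired semi-trivialization.

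The only step that is not purely formal is the invocation of the homotopy invariance theorem, which requires paracompactness of $\mathcal{M}$; this is exactly the hypothesis the statement underlines. There is a minor technical point that $\tilde{H}$ need only be continuous for the topological version of homotopy invariance, and one can always smooth it afterwards if one prefers to stay in the smooth category. No further obstacle arises, and the proof reduces to citing this standard result.
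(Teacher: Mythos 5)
Your proof is correct and follows essentially the same route as the paper: contract $\mathcal{X}$ to a point, lift to a homotopy on $\mathcal{M}$ between $\mathrm{Id}_{\mathcal{M}}$ and the map $(x,y)\mapsto(x_0,y)$, and invoke homotopy invariance of vector bundle pullbacks over a paracompact base (the paper cites Hatcher's Proposition 1.7 where you cite Husemoller). You are slightly more explicit than the paper in factoring $q$ through $\mathrm{pr}_{\mathcal{Y}}$ to exhibit $q^*E$ as the product bundle $\mathcal{X}\times F$, but this is the same argument, not a different one.
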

\begin{proof}
Let $f: [0,1]\times \mathcal{X}\times\mathcal{Y}\rightarrow \mathcal{X}\times \mathcal{Y}$ be a smooth map such that $f_0 = Id_{\mathcal{X}\times\mathcal{Y}}$ and $f_1(x,y)= (x_0,y)$ for all $(x,y)\in \mathcal{X}\times \mathcal{Y}$ ($x_0$ is some element of $\mathcal{X}$). We define $\mathcal{E} = f^*E$. Then, for example by proposition 1.7 in \cite{hatcher2003vector} (rather its direct equivalent in the smooth case, obtained by minor modification in the proof), $\mathcal{E}_{|_{\left\{0\right\}\times\mathcal{X}\times\mathcal{Y}}}$ is isomorphic to $\mathcal{E}_{|_{\left\{1\right\}\times\mathcal{X}\times\mathcal{Y}}}$
\end{proof}

\begin{prop}
\label{statSyst}
Let $E$ be a vector bundle over $\mathcal{X}\times\mathcal{Y}$.
If $\mathcal{C}=(U_\alpha, \Psi_\alpha)$ is a complete system of local trivializations for $E$ such that all the transition maps $g_{\alpha, \alpha'}\in C^\infty(U_\alpha\cap U_{\alpha'}; Gl_n(\R))$ are independent of the first factor (ie factorize as $g_{\alpha,\alpha'} = \tilde{g}_{\alpha,\alpha'}\circ \pi_2$ where $\pi_2$ is the second projection on $\mathcal{X}\times \mathcal{Y}$), then there exists a semi-trivialization $f_\mathcal{C}: E\rightarrow \mathcal{X}\times F_\mathcal{C}$ where $F_{\mathcal{C}}$ is the vector bundle on $\mathcal{Y}$ given by the transition maps $\tilde{g}_{\alpha,\alpha'}$.
\end{prop}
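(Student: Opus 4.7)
The plan is to build $F_\mathcal{C}$ directly on $\mathcal{Y}$ by the standard cocycle construction using the maps $\tilde{g}_{\alpha,\alpha'}$ provided by the hypothesis, and then to assemble $f_\mathcal{C}$ chart by chart from the given trivializations $\Psi_\alpha$. The geometric intuition is that since the transition functions depend only on the $\mathcal{Y}$-coordinate, $E$ is forced to be a pullback along $\pi_2$ of some vector bundle on $\mathcal{Y}$, and $F_\mathcal{C}$ will be precisely that bundle.

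First I would refine the cover $\{U_\alpha\}$ if necessary so that each refined $U_\alpha$ has the product form $W_\alpha \times V_\alpha$ (possible since $\mathcal{X}\times\mathcal{Y}$ is a manifold and any local trivialization of a vector bundle restricts to one over a smaller product neighborhood), with the refined trivializations still satisfying the factorization hypothesis. Then $\{V_\alpha\}$ is an open cover of $\mathcal{Y}$, and $\tilde{g}_{\alpha,\alpha'}$ is defined on all of $V_\alpha\cap V_{\alpha'}$ whenever $W_\alpha\cap W_{\alpha'}\neq\emptyset$. The cocycle identity for the $\tilde{g}_{\alpha,\alpha'}$ is inherited from that of the $g_{\alpha,\alpha'}$, so gluing $V_\alpha\times\R^n$ along these transition maps yields a rank-$n$ vector bundle $F_\mathcal{C}\to\mathcal{Y}$ together with local trivializations $\tilde{\Psi}_\alpha$. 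I then define
\[
f_\mathcal{C}(e) := \bigl(x,\, \tilde{\Psi}_\alpha^{-1}(y, v)\bigr) \quad \text{whenever } \Psi_\alpha(e) = \bigl((x,y), v\bigr).
\]
Well-definedness on $U_\alpha\cap U_{\alpha'}$ is exactly where the hypothesis is used: if $\Psi_\alpha(e)=((x,y),v)$ then $\Psi_{\alpha'}(e)=((x,y),\tilde{g}_{\alpha',\alpha}(y)v)$, and by construction of $F_\mathcal{C}$ the elements $\tilde{\Psi}_\alpha^{-1}(y,v)$ and $\tilde{\Psi}_{\alpha'}^{-1}(y,\tilde{g}_{\alpha',\alpha}(y)v)$ represent the same point of $F_\mathcal{C}$. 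Smoothness, fiberwise $\R$-linearity, and bijectivity are then immediate from the corresponding properties of $\Psi_\alpha$ and $\tilde{\Psi}_\alpha$, and the commutativity of the semi-trivialization diagram holds by inspection.

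The main obstacle I anticipate is not in the gluing argument itself—which is essentially tautological once $F_\mathcal{C}$ has been correctly defined—but in the preliminary bookkeeping: $\tilde{g}_{\alpha,\alpha'}$ is \emph{a priori} only defined on $\pi_2(U_\alpha\cap U_{\alpha'})$, which can be strictly smaller than $\pi_2(U_\alpha)\cap\pi_2(U_{\alpha'})$, so some care is needed to ensure the cocycle data on $\{V_\alpha\}$ is coherent. The refinement to product opens described above resolves this cleanly; alternatively one could work with the partial cocycle directly and recover any missing value $\tilde{g}_{\alpha,\alpha'}(y)$ from cocycle consistency through any third chart $\alpha''$ whose $V_{\alpha''}$ contains $y$.
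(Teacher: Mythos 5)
Your proof takes essentially the same route as the paper: construct $F_\mathcal{C}$ by gluing $\coprod_\alpha \pi_2(U_\alpha)\times\R^n$ along the $\tilde{g}_{\alpha,\alpha'}$, then assemble $f_\mathcal{C}$ chart by chart and verify well-definedness on overlaps using the factorization hypothesis. You are in fact slightly more careful than the paper, which passes silently over the point you flag, namely that $\tilde{g}_{\alpha,\alpha'}$ is a priori only defined on $\pi_2(U_\alpha\cap U_{\alpha'})$ rather than on all of $\pi_2(U_\alpha)\cap\pi_2(U_{\alpha'})$.
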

\begin{remark}
We will say that the system $\mathcal{C}$ is stationary with respect to $\mathcal{X}$. As mentioned in the proof of the proposition, a stationary system of trivialization comes naturally with a semi-trivialization of $E$.
\end{remark}
\begin{proof}
We first define the vector bundle $F_{\mathcal{C}}$ by $\coprod_{\alpha} \pi_2(U_\alpha)\times \R^n / (\tilde{g}_{\alpha,\alpha'})$ (the family $\tilde{g}_{\alpha,\alpha'}$ satisfies the cocycle condition). We denote by $i_\alpha$ the natural maps from $U_\alpha\times \R^n$ to $F_{\mathcal{C}}$ (composition of the injection in the disjoint union and the projection in the quotient). These maps are continuous injective and by definition of the quotient, on $U_\alpha\cap U_\alpha'$, $i_\alpha^{-1} i_{\alpha'}(y,v) = (y, g_{\alpha,\alpha'}(y)v)$. Then we define the map $f_\mathcal{C}$ by $f_{\mathcal{C}}(z) = (\Psi_\alpha(z)_1, i_\alpha(\Psi_\alpha(z)_2,\Psi_\alpha(z)_3))$ if $z\in U_\alpha$ (the index 1,2 and 3 refers to the components in the product decomposition). This does not depend on the choice of $\alpha$ such that $z\in U_\alpha$ and $(Id_\mathcal{X}\times i_{\alpha}^{-1}) \circ f_\mathcal{C}\circ \Psi_\alpha^{-1}$ is the identity of $U_\alpha\times \R^n$. Therefore, $f_\mathcal{C}$ is an isomorphism of vector bundles.
\end{proof}
\begin{prop}
Reciprocally, if we have a semi-trivialization $f: E\rightarrow \mathcal{X}\times F$ and a complete system of local trivializations $(U_\alpha, \Psi_\alpha)$ on $F$, we can define a complete system of local trivialization on $E$ with transition maps independent of the first factor.
\end{prop}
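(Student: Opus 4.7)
The plan is to pull back the given trivializations on $F$ to $E$ through the semi-trivialization $f$, and then verify directly that the resulting transition maps factor through the second projection.

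Concretely, suppose $f: E \to \mathcal{X}\times F$ is a semi-trivialization and $(U_\alpha, \Psi_\alpha)$ is a complete system of local trivializations of $F$, with $\Psi_\alpha: p_F^{-1}(U_\alpha)\to U_\alpha\times \R^n$. First I would set $V_\alpha := \mathcal{X}\times U_\alpha$, which is open in $\mathcal{X}\times\mathcal{Y}=\mathcal{M}$, and the family $(V_\alpha)$ covers $\mathcal{M}$ because the $(U_\alpha)$ cover $\mathcal{Y}$. Next, by the commutative diagram in the definition of semi-trivialization, $f$ restricts to a diffeomorphism $p_E^{-1}(V_\alpha)\to V_\alpha\times_{\mathcal{Y}} p_F^{-1}(U_\alpha)$ compatible with projections; more explicitly, for $z\in p_E^{-1}(V_\alpha)$ one can write $f(z)=(x, e)$ with $x=\mathrm{pr}_{\mathcal{X}}(p_E(z))$ and $e\in p_F^{-1}(U_\alpha)$. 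I then define
\[
\tilde{\Psi}_\alpha:\ p_E^{-1}(V_\alpha)\to V_\alpha\times\R^n,\qquad \tilde{\Psi}_\alpha(z) := \bigl(x,\ \mathrm{pr}_{U_\alpha}(\Psi_\alpha(e)),\ \mathrm{pr}_{\R^n}(\Psi_\alpha(e))\bigr),
\]
which is a diffeomorphism and linear on fibers, hence a local trivialization of $E$ on $V_\alpha$.

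For the transition maps, pick $z\in p_E^{-1}(V_\alpha\cap V_{\alpha'})$ and write $f(z)=(x,e)$ as above. Denote by $g_{\alpha,\alpha'}\in C^\infty(U_\alpha\cap U_{\alpha'};GL_n(\R))$ the transition map of $F$, so that $\Psi_\alpha\circ\Psi_{\alpha'}^{-1}(y,v)=(y, g_{\alpha,\alpha'}(y)v)$. A direct substitution gives
\[
\tilde{\Psi}_\alpha\circ\tilde{\Psi}_{\alpha'}^{-1}(x,y,v) = \bigl(x,\ y,\ g_{\alpha,\alpha'}(y)\,v\bigr),
\]
so the transition map on $V_\alpha\cap V_{\alpha'}=\mathcal{X}\times(U_\alpha\cap U_{\alpha'})$ is $\tilde{g}_{\alpha,\alpha'}(x,y)=g_{\alpha,\alpha'}(y)$, which indeed factors through the second projection $\pi_2$.

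There is no serious obstacle here: the only small point to be careful about is the book-keeping between $E$, $\mathcal{X}\times F$ and the fiber coordinates provided by $\Psi_\alpha$, which is just a matter of writing the semi-trivialization explicitly on each fiber. The construction is essentially the inverse of the one in Proposition \ref{statSyst}: starting from a semi-trivialization we recover a stationary complete system, and starting from a stationary complete system we recover a semi-trivialization (the two constructions are mutually inverse up to the isomorphism described in Remark \ref{changeSemiTriv}).
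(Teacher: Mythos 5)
Your construction $\tilde{\Psi}_\alpha$ is exactly the paper's $(\mathrm{Id}_{\mathcal{X}}\times\Psi_\alpha)\circ f$ on $\mathcal{X}\times U_\alpha$, just unpacked into coordinates; the paper states this one-line system and leaves the transition-map computation implicit, whereas you carry it out explicitly. Correct, and essentially the same approach.
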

\begin{proof}
We just take the system $(\mathcal{X}\times U_\alpha, (Id_{\mathcal{X}}\times \Psi_\alpha) \circ f)$.
\end{proof}

We now apply these notions to the study of $\mathcal{B}(s,w)$. 
First note that the complete system of trivializations $(A_N, A_S)$ of $\mathcal{A}_0$ has a transition map depending only on $\phi$. As a consequence, the associated sytem of trivializations of $\mathcal{B}(s,w)$ has also a transition map depending only on $\phi$ (see remark \ref{changeOfMap}). Therefore, we are in the context of proposition \ref{statSyst} and $\mathcal{B}(s,w)$ is trivial with respect to the factor $\R_t \times (r_0, +\infty)$ (we could also have used proposition \ref{contractileTrivial} but proposition \ref{statSyst} provides a concrete semi-trivialization $f:\mathcal{A}_0 \rightarrow \R_t \times (r_0, +\infty) \times \mathcal{B}_{\mathbb{S}^2}(s,w)$ associated with the stationary system of local trivializations). The map $f$ enables to identify $\Gamma(\mathcal{B}(s,w))$ with $C^{\infty}(\R_t\times (r_0,+\infty), \Gamma(\mathcal{B}_{\mathbb{S}^2}(s,w)))$.

\section{Connections on the bundles and GHP formalism}
There are different methods to define the spin connection and the GHP connection, for example we could write them down explicitly in an expression involving coefficients depending only on the Levi-Civita connection and the metric (see remark \ref{computationNabla} and proposition \ref{computationGHPConnection}) and check that it defines a linear connection. For computational purpose, these definitions are enough but they are not insightful. The spin connection is the unique connection $\nabla$ such that $j^*\nabla_{LC} = \nabla\otimes\nabla$ (where $\nabla_{LC}$ is the Levi-Civita connection) and $\nabla\epsilon = 0$ and these properties can be used as an alternative definition as well. However, here we choose an other definition. We start with the Levi-Civita connection and move it naturally through the different bundles involved. The advantage of this method is that each step is very natural, moreover it gives a good understanding of where the GHP connection comes from. Its main drawback is that it is a little longer than the direct definitions and involves some elementary knowledge about principal connections.

Therefore, we give a brief reminder about principal connections in order to have a self contained presentation. A more detailed introduction to this topic can be found in \cite{kobayashi1963foundations} (second chapter).
\subsection{Principal connection}
\begin{definition}
Let $\pi_E: E\rightarrow \mathcal{M}$ be a principal bundle with structure group $G$ (multiplicative, with neutral element denoted by 1). Let $\mathfrak{g}$ be the Lie algebra of $G$.
A principal connection $\omega$ is a $\mathfrak{g}$-valued one form on $E$ such that:
\begin{itemize}
\item For all $g\in G$, $Ad_g R_g^* \omega = \omega$ where $Ad_g: \mathfrak{g}\rightarrow \mathfrak{g}$ is the adjoint representation ($Ad_g(\xi):=\frac{\dd}{\dd t}_{|t=0}g\exp(t\xi)g^{-1}$) and $R_g: E \rightarrow E$ is the right action of $g$ on $E$.
\item For all $\xi \in \mathfrak{g}$ and all $e\in E$, $\omega_e(\dd_{1} i_e \xi) = \xi$ where we have used the map
\[
i_e:\begin{cases}
G \rightarrow E \\
g \mapsto e\cdot g
\end{cases}
\]
\end{itemize}
\end{definition}
\begin{remark}
\label{egaliteEns}
Differentiating the relation $\pi_E\circ i_a(g) = a$, we have the inclusion $\dd_1 i_a(\mathfrak{g}) \subset ker (\dd_a \pi_E)$. Moreover, $\dim(ker(\dd_a \pi_E)) = \dim(\pi_E^{-1}(\left\{a\right\}) = \dim(G) = \dim(\mathfrak{g}) = \dim \dd_1 i_a(\mathfrak{g})$. Therefore $\dd_1 i_a(\mathfrak{g}) = ker (\dd_a \pi_E)$.
\end{remark}
\begin{lemma}
\label{decompoTangent}
For all $e\in E$,
$ker(\omega_e)\oplus ker(\dd_e \pi_E) = T_eE$. In particular $\dd_e {\pi_E}_{|ker(\omega_e)}$ is an isomorphism between $ker(\omega_e)$ and $T_{\pi_E(e)}\mathcal{M}$.
\end{lemma}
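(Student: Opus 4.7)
The strategy is dimension counting combined with a direct argument that the two subspaces intersect trivially.

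First I would collect the relevant dimensions. Since $E$ is a principal $G$-bundle over $\mathcal{M}$, we have $\dim T_e E = \dim \mathcal{M} + \dim G$. By remark \ref{egaliteEns}, $\ker(\dd_e \pi_E) = \dd_1 i_e(\mathfrak{g})$, which has dimension $\dim G$. For the other factor, observe that the second axiom of a principal connection, $\omega_e \circ \dd_1 i_e = \mathrm{id}_{\mathfrak{g}}$, immediately implies that $\omega_e : T_e E \to \mathfrak{g}$ is surjective, so $\dim \ker(\omega_e) = \dim T_e E - \dim G = \dim \mathcal{M}$.

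Next I would show the two subspaces intersect trivially. Let $v \in \ker(\omega_e) \cap \ker(\dd_e \pi_E)$. Since $v \in \ker(\dd_e \pi_E) = \dd_1 i_e(\mathfrak{g})$, we can write $v = \dd_1 i_e(\xi)$ for some $\xi \in \mathfrak{g}$. Applying $\omega_e$ and using the second axiom of the connection,
\[
0 = \omega_e(v) = \omega_e(\dd_1 i_e \xi) = \xi,
\]
so $v = 0$. Combining this with the dimension computation $\dim \ker(\omega_e) + \dim \ker(\dd_e \pi_E) = \dim \mathcal{M} + \dim G = \dim T_e E$, the decomposition $T_e E = \ker(\omega_e) \oplus \ker(\dd_e \pi_E)$ follows.

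For the last claim, $\dd_e \pi_E$ is surjective onto $T_{\pi_E(e)}\mathcal{M}$ (since $\pi_E$ is a submersion, as it is the projection of a principal bundle). The restriction $\dd_e \pi_{E|\ker(\omega_e)}$ has kernel $\ker(\omega_e) \cap \ker(\dd_e \pi_E) = 0$, hence is injective. Since $\dim \ker(\omega_e) = \dim \mathcal{M} = \dim T_{\pi_E(e)}\mathcal{M}$, this restriction is an isomorphism. There is no real obstacle here; the only subtle ingredient is the identification $\ker(\dd_e \pi_E) = \dd_1 i_e(\mathfrak{g})$ already supplied by the preceding remark, after which everything reduces to linear algebra using the two defining axioms of $\omega$.
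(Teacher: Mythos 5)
Your proof is correct and follows essentially the same route as the paper's: use remark \ref{egaliteEns} to identify $\ker(\dd_e \pi_E)$ with $\dd_1 i_e(\mathfrak{g})$, derive trivial intersection from the second connection axiom, and conclude by dimension counting. You merely spell out the final isomorphism claim in slightly more detail.
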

\begin{proof}
The image of the injective map $\dd_1 i_e$ is exactly $ker(\dd_e \pi_E)$ (see remark \ref{egaliteEns}). So the second point in the definition of $\omega$ implies that $ker(\omega_e)\cap ker(\dd_e \pi_E) = \left\{0\right\}$ and $\text{dim} Ran(\omega_e) = \text{dim} \mathfrak{g} = \text{dim}(ker(\dd_e \pi_E))$. Moreover, $\text{dim}(ker(\omega_e)) = \text{dim}(E)-\text{dim}(Ran(\omega_e))$ so we deduce that $\text{dim}(ker(\omega_e)) + \text{dim}ker(\dd_e \pi_E) = \text{dim}(E)$ and we have the lemma.
\end{proof}

From now, for $a\in E$, we denote by $H_a := ker(\omega_a)$. The following property gives a way to construct a linear connection (on an associated vector bundle) from a principal connection.
\begin{prop}
\label{principalToLinear}
Let $\rho: G\rightarrow GL(V)$ be a representation of $G$ and let $\mathcal{F}$ be the vector bundle associated to this representation. Then, a smooth section $s$ of $\mathcal{F}$ is naturally identified with a smooth function $f:E\rightarrow V$ such that $f(x\cdot g) = \rho(g^{-1})(f(x))$ and we can define (for $X\in T_x\mathcal{M}$) $(\nabla_X s)(x):= [a,\dd_{a}f((\dd_a {\pi_{E}}_{|_{H_a}})^{-1}(X))]$ where $a$ is any element of $\pi_{E}^{-1}(\left\{x\right\})$. With this definition, $\nabla$ is a linear connection on $\mathcal{F}$.
\end{prop}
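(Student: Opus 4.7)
The plan is to verify three things in order: (i) the natural identification of smooth sections of $\mathcal{F}$ with smooth $G$-equivariant functions $f : E \to V$ satisfying $f(a \cdot g) = \rho(g^{-1})f(a)$; (ii) that the proposed value $(\nabla_X s)(x)$ does not depend on the choice of $a \in \pi_E^{-1}(\{x\})$; and (iii) that $\nabla$ so defined satisfies the axioms of a linear connection and takes values in smooth sections of $\mathcal{F}$.

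For (i), given a section $s$ and $a \in E$ with $\pi_E(a) = x$, I would use the fact that distinct pairs $(a, v)$ with the same first entry are never equivalent to define $f(a)$ as the unique element of $V$ with $s(x) = [a, f(a)]$. The equivalence relation $(a, v) \sim (a \cdot g, \rho(g^{-1}) v)$ used to construct $\mathcal{F}$ forces the equivariance $f(a \cdot g) = \rho(g^{-1}) f(a)$, and smoothness of $f$ follows from smoothness of $s$ by reading off $f$ in any local principal trivialization via the associated vector bundle trivialization.

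The heart of the argument is step (ii). Writing $a' = a \cdot g$, I would first differentiate the invariance $Ad_g R_g^* \omega = \omega$ to obtain $R_g^* \omega = Ad_{g^{-1}} \omega$, which implies $\dd_a R_g(H_a) \subset H_{a \cdot g}$. Next, $\pi_E \circ R_g = \pi_E$ gives $\dd_{a \cdot g} \pi_E \circ \dd_a R_g = \dd_a \pi_E$. Combining these with lemma \ref{decompoTangent} yields
\[ (\dd_{a \cdot g} {\pi_E}_{|H_{a \cdot g}})^{-1}(X) = \dd_a R_g\bigl((\dd_a {\pi_E}_{|H_a})^{-1}(X)\bigr). \]
Differentiating the equivariance $f \circ R_g = \rho(g^{-1}) \circ f$ at $a$ then gives $\dd_{a \cdot g} f \circ \dd_a R_g = \rho(g^{-1}) \circ \dd_a f$. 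Plugging in, the candidate value at $a \cdot g$ is $\rho(g^{-1})$ applied to the candidate value at $a$, so the two classes in $\mathcal{F}_x$ coincide under the relation $[a, v] = [a \cdot g, \rho(g^{-1}) v]$, proving independence.

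For (iii), $\R$-linearity of $\nabla_X s$ in $X$ and the tensoriality $\nabla_{\phi X} s = \phi \nabla_X s$ follow directly from linearity of $\dd_a f$ and of the horizontal lift $(\dd_a {\pi_E}_{|H_a})^{-1}$. For the Leibniz rule, if $\phi \in C^\infty(\mathcal{M})$ and $s$ corresponds to $\tilde{s} : E \to V$, then $\phi s$ corresponds to $(\phi \circ \pi_E) \tilde{s}$; the usual product rule for the differential together with the identity $\dd_a(\phi \circ \pi_E)(Y) = (X \phi)(x)$ for any $Y$ with $\dd_a \pi_E(Y) = X$ yields $\nabla_X(\phi s) = (X \phi) s + \phi \nabla_X s$. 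Finally, smoothness is obtained by picking any local smooth section $\sigma : U \to E$ of the principal bundle and reading the formula as $(\nabla_X s)(x) = [\sigma(x), \dd_{\sigma(x)} f\bigl((\dd_{\sigma(x)} {\pi_E}_{|H_{\sigma(x)}})^{-1}(X)\bigr)]$, which depends smoothly on $(x, X)$ since both the horizontal lift and $f$ are smooth and $\sigma$ provides a smooth local trivialization of $\mathcal{F}$. The only mildly subtle step is (ii); everything else is bookkeeping once the $G$-equivariance of the horizontal distribution $H$ has been extracted from the definition of $\omega$.
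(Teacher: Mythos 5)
Your proposal is correct and follows essentially the same route as the paper's own proof: you establish well-definedness by showing $\dd_a R_g(H_a)\subset H_{a\cdot g}$ (from $R_g^*\omega = Ad_{g^{-1}}\omega$), then combine this with $\pi_E\circ R_g = \pi_E$ and the equivariance of $f$ to intertwine the horizontal lifts at $a$ and $a\cdot g$, and you verify the Leibniz rule via the function $(\phi\circ\pi_E)f$. You fill in a few details the paper takes for granted (the section/equivariant-function correspondence, smoothness, and the explicit derivation of $R_g$-invariance of the horizontal distribution), but the core argument is identical.
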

\begin{proof}
The definition does not depend on the choice of $a$. If we choose $a'= a\cdot g$, we have to prove that $[a,\dd_{a}f((\dd_a {\pi_{E}}_{|_{H_a}})^{-1}(X))] = [a',\dd_{a'}f((\dd_{a'} {\pi_{E}}_{|_{H_{a'}}})^{-1}(X))]$. We have
\[[a', \dd_{a'}f((\dd_{a'} {\pi_{E}}_{|_{H_{a'}}})^{-1}(X))] = [a, \rho(g)\dd_{a'}f((\dd_{a'} {\pi_{E}}_{|_{H_{a'}}})^{-1}(X))]\] so it remains to prove that \[\rho(g)\dd_{a'}f((\dd_{a'} {\pi_{E}}_{|_{H_{a'}}})^{-1}(X)) = \dd_{a}f((\dd_a {\pi_{E}}_{|_{H_a}})^{-1}(X))\]. We have the following facts
\begin{itemize}
\item $\dd_{a\cdot g} \pi_E \dd_a R_g = \dd_a \pi_E$ (by differentiating $\pi_E\circ R_g = \pi_E$)
\item $\dd_{a\cdot g} {\pi_E}_{|_{H_{a\cdot g}}} (\dd_a R_g)_{|_{H_{a}}} = \dd_a {\pi_E}_{|_{H_a}}$ using the previous point and the fact that $R_g(H_a)=H_{a\cdot g}$.
\item $\dd_{a\cdot g} {\pi_E}_{|_{H_{a\cdot g}}}^{-1} = \dd_a R_g \dd_a {\pi_E}_{|_{H_a}}^{-1}$ (using the previous point)
\item $\dd_{a\cdot g} f \dd_a R_g= \rho(g^{-1})\dd_a f$ (by differentiating $f(a\cdot g) = \rho(g^{-1})f(a)$)
\end{itemize}
We use that to conclude $\rho(g)\dd_{a'}f((\dd_{a'} {\pi_{E}}_{|_{H_{a'}}})^{-1}(X)) = \dd_{a}f((\dd_a {\pi_{E}}_{|_{H_a}})^{-1}(X))$. Now we have to prove that $\nabla$ is a linear connection. We obviously have $\nabla_{\lambda X + Y} = \lambda\nabla_X+\nabla_Y$. Let $h$ be a smooth function on $\mathcal{M}$. The section $hs$ is associated with the function $\tilde{f} = (h\circ \pi_{E}) f$ and $\dd_a \tilde{f} = f(a)\dd_{\pi_{E}(a)}h\dd_a \pi_{E} + h(\pi_E(a))\dd_a f$. Therefore, we have \[\dd_{a}\tilde{f}((\dd_a {\pi_{E}}_{|_{H_a}})^{-1}(X)) = \dd_{\pi_{E}(a)}h(X) f+ h(\pi_{E}(a))\dd_{a}f((\dd_a {\pi_{E}}_{|_{H_a}})^{-1}(X)).\] We deduce that $\nabla$ defines a linear connection on $\mathcal{F}$.
\end{proof}

A useful lemma to compute the such defined connection is the following:
\begin{lemma}
\label{computationLinear}
We use the notation of proposition \ref{principalToLinear}. Let $e$ be a local smooth section of $E$ around some $x_0\in \mathcal{M}$ such that $\omega \circ \dd_{x_0} e = 0$. Let $X \in T_{x_0}\mathcal{M}$ and let $s$ be a local smooth section of $\mathcal{F}$ on some open neighborhood $U$ of $x_0$ such that $s(x) = [(e(x), v(x))]$ with $v: U\rightarrow V$ smooth. Then, $\nabla_X s = [(e(x_0), \dd_{x_0} v(X))]$
\end{lemma}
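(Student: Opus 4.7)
The plan is to unwind the definition of $\nabla$ from Proposition \ref{principalToLinear} and verify that, under the hypothesis $\omega\circ d_{x_0}e = 0$, the abstractly defined horizontal lift of $X$ is simply $d_{x_0}e(X)$. Once that is established, the formula drops out by a chain-rule computation.

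First I would identify the equivariant function $f:E\to V$ associated with $s$. Since by hypothesis $s(x) = [(e(x), v(x))]$ and also $s(x) = [(e(x), f(e(x)))]$ by the identification recalled in Proposition \ref{principalToLinear}, the equality in the quotient forces $f\circ e = v$ on a neighborhood of $x_0$. I will use this as the bridge between the two sides of the desired equality.

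Next, I would compute the horizontal lift $(d_{e(x_0)}\pi_E|_{H_{e(x_0)}})^{-1}(X)$ explicitly. Differentiating the identity $\pi_E\circ e = \mathrm{Id}_U$ at $x_0$ gives $d_{e(x_0)}\pi_E\circ d_{x_0}e = \mathrm{Id}_{T_{x_0}\mathcal{M}}$, so $d_{x_0}e(X)$ projects onto $X$. The assumption $\omega\circ d_{x_0}e = 0$ says precisely that the image of $d_{x_0}e$ lies in $H_{e(x_0)} = \ker\omega_{e(x_0)}$. By Lemma \ref{decompoTangent}, $d_{e(x_0)}\pi_E$ restricted to $H_{e(x_0)}$ is an isomorphism onto $T_{x_0}\mathcal{M}$, so the horizontal lift is uniquely characterized by being horizontal and projecting to $X$, whence $(d_{e(x_0)}\pi_E|_{H_{e(x_0)}})^{-1}(X) = d_{x_0}e(X)$.

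Finally, I would substitute this into the definition of $\nabla_X s$ at the point $a = e(x_0)$:
\[
(\nabla_X s)(x_0) = \bigl[\bigl(e(x_0),\, d_{e(x_0)}f\cdot d_{x_0}e(X)\bigr)\bigr] = \bigl[\bigl(e(x_0),\, d_{x_0}(f\circ e)(X)\bigr)\bigr] = \bigl[\bigl(e(x_0),\, d_{x_0}v(X)\bigr)\bigr],
\]
using $f\circ e = v$ in the last step. This is exactly the claimed formula. There is no real obstacle here; the only subtle point is recognizing that the vanishing-of-$\omega\circ de$ hypothesis is tailored precisely so that $de$ itself furnishes the horizontal lift, avoiding any explicit decomposition of $T_{e(x_0)}E$ along $H_{e(x_0)}\oplus\ker d\pi_E$.
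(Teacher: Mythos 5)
Your proof is correct and follows essentially the same route as the paper: identify the equivariant function $f$ satisfying $f\circ e = v$, observe that $\omega\circ\dd_{x_0}e = 0$ together with $\pi_E\circ e = \mathrm{Id}$ forces $\dd_{x_0}e(X)$ to be the horizontal lift of $X$, and then conclude by the chain rule. No gaps.
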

\begin{proof}
The equivariant function associated to $s$ is $f: \pi_E^{-1}(U) \rightarrow V$ such that for all $x\in U$, $f(e(x)) = v(x)$. Let $X\in T_{x_0}\mathcal{M}$. We use the definition of the connection $\nabla$ to write
\begin{align*}
\nabla_X s = [(e(x_0), \dd_{e(x_0)}f \left(\dd_{e(x_0)}{\pi_E}_{|_{H_{e(x_0)}}}\right)^{-1}(X))]
\end{align*}
Moreover, we have $\dd_{x_0}e (X) \in ker(\omega_{e(x_0)}) = H_{e(x_0)}$ and $\dd_{e(x_0)}\pi_E \dd_{x_0} e(X) = \dd_{x_0}(\pi_E\circ e)(X) = X$. We deduce that $\left(\dd_{e(x_0)}{\pi_E}_{|H_{e(x_0)}}\right)^{-1}(X) = \dd_{x_0}e (X)$. As a consequence
\begin{align*}
\nabla_X s &= [(e(x_0), \dd_{e(x_0)}f \dd_{x_0}e (X))]\\
&= [(e(x_0), \dd_{x_0} (f\circ e)(X))]\\
&= [(e(x_0), \dd_{x_0} v(X))]
\end{align*}
\end{proof}
\begin{remark}
In the case where $E$ is the frame bundle of $\mathcal{F}$, the condition $\omega \circ \dd_{x_0} e = 0$ amounts to say that the derivative of the local frame vanishes at $x_0$. Then the previous lemma tells us that in this local frame, we can compute covariant derivatives of a section of $\mathcal{F}$ by taking usual derivatives of the coordinates.
\end{remark}
\begin{remark}
\label{sectionPara}
Let $E$ be a general principal bundle over $\mathcal{M}$ and $\omega$ a principal connection on $E$. Then for all $e_0\in E$, $ker(\omega_e)$ is transverse to the fiber (and of dimension $\dim(\mathcal{M})$) by lemma \ref{decompoTangent}. As a consequence, there exists a local section of $E$ around $x_0:= \pi_E(e_0)$ such that $e(x_0) = e_0$ and $\omega_{e_0}\circ\dd_{x_0}e_0 = 0$. Moreover, since $\dim(\dd_{x_0}e_0(T_{x_0}\mathcal{M}))=\dim(\mathcal{M})= \dim(ker(\omega_{e_0}))$, we have $\dd_{x_0}e_0(T_{x_0}\mathcal{M}) = ker(\omega_{e_0})$.
\end{remark}

We can use lemma \ref{computationLinear} to compute the connection in the general case.
\begin{coro}
\label{computationLinearGeneral}
We use the notations of proposition \ref{principalToLinear}. Let $e$ be a local smooth section of $E$ around $x_0\in \mathcal{M}$. Let $X \in T_{x_0}\mathcal{M}$ and $s$ be a section of $\mathcal{F}$ on some open neighborhood $U$ of $x_0$ such that $s(x) = [(e(x), v(x))]$ with $v: U\rightarrow V$ smooth. Then we have $\nabla_X s = [e(x_0), \dd_{x_0}v(X) + \dd_{1}\rho \omega_{e(x_0)}( \dd_{x_0}e(X)) v(x_0)]$.
\end{coro}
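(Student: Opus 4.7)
The plan is to reduce to Lemma \ref{computationLinear} by modifying the section $e$ via a right $G$-action so that the modified section satisfies the horizontality hypothesis at $x_0$.

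First, by remark \ref{sectionPara} applied at $e_0 := e(x_0)$, there exists a local smooth section $\tilde{e}$ around $x_0$ with $\tilde{e}(x_0) = e(x_0)$ and $\omega_{\tilde{e}(x_0)}\circ \dd_{x_0}\tilde{e} = 0$. Since $\tilde{e}$ and $e$ both project onto $\text{Id}_{U}$, there exists a unique smooth map $g : U \to G$ such that $\tilde{e}(x) = e(x)\cdot g(x)$, and $g(x_0) = 1$. Writing $s(x) = [(e(x), v(x))] = [(\tilde{e}(x), \rho(g(x)^{-1}) v(x))]$, I define $\tilde{v}(x) := \rho(g(x)^{-1}) v(x)$. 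Lemma \ref{computationLinear} applied to the section $\tilde{e}$ then gives
\[
\nabla_X s = [(\tilde{e}(x_0), \dd_{x_0}\tilde{v}(X))] = [(e(x_0), \dd_{x_0}\tilde{v}(X))].
\]

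Next, I differentiate $\tilde{v}$ at $x_0$. Since $g(x_0) = 1$ and the map $h \mapsto h^{-1}$ has derivative $-\text{Id}$ at $1$, the product rule yields
\[
\dd_{x_0}\tilde{v}(X) = \dd_{x_0}v(X) - \dd_{1}\rho\bigl(\dd_{x_0}g(X)\bigr)\, v(x_0).
\]
The remaining task is to identify $\dd_{x_0}g(X)$ with $-\omega_{e(x_0)}(\dd_{x_0}e(X))$. To do so, I differentiate the relation $\tilde{e}(x) = e(x)\cdot g(x)$ at $x_0$; using the product decomposition of the tangent space of $E \times G$ at $(e(x_0), 1)$ and the fact that right multiplication by $1$ is the identity, I obtain
\[
\dd_{x_0}\tilde{e}(X) = \dd_{x_0}e(X) + \dd_{1}i_{e(x_0)}\bigl(\dd_{x_0}g(X)\bigr).
\]
Applying $\omega_{e(x_0)}$ to both sides, the left side vanishes by the choice of $\tilde{e}$, and the second term on the right equals $\dd_{x_0}g(X)$ by the second defining property of a principal connection. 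Thus $\dd_{x_0}g(X) = -\omega_{e(x_0)}(\dd_{x_0}e(X))$.

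Substituting back yields the desired formula
\[
\nabla_X s = [(e(x_0),\ \dd_{x_0}v(X) + \dd_{1}\rho\bigl(\omega_{e(x_0)}(\dd_{x_0}e(X))\bigr)\, v(x_0))].
\]
There is no substantive obstacle; the only care needed is bookkeeping for the differential of the action map $(a,g)\mapsto a\cdot g$ at $(e(x_0), 1)$, which is precisely where the defining property $\omega_e(\dd_1 i_e\xi) = \xi$ of a principal connection enters.
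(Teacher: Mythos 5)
Your proof is correct and follows essentially the same strategy as the paper's: introduce a horizontal section $\tilde{e} = e\cdot g$ via remark \ref{sectionPara}, identify $\dd_{x_0}g(X) = -\omega_{e(x_0)}(\dd_{x_0}e(X))$ by differentiating $\tilde{e} = e\cdot g$ and applying $\omega$, and then invoke lemma \ref{computationLinear} on the rewritten section $s = [(\tilde{e}, \rho(g^{-1})v)]$. The paper only differs cosmetically in the order in which it carries out the two differentiation steps.
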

\begin{proof}
We define $e'$ smooth section of $E$ on a neighborhood $U$ of $x_0$ such that $e'(x_0) = e(x_0)$ and $\dd_{x_0}e' (T_{x_0}\mathcal{M}) = ker(\omega_{e(x_0)})$ ($e'$ exists by remark \ref{sectionPara}). We define $g: U\rightarrow G$ the unique smooth map such that $e'= e\cdot g$ (in particular $g(x_0) = 1$). By the chaine rule, we have:
\begin{align*}
\dd_{x_0} e'(x) &= \dd_{1}i_{e(x_0)}(\dd_{x_0}g(X)) + \dd_{x_0}e(X)\\
\end{align*}
But by definition of $e'(x_0)$, $\omega_{e(x_0)}(\dd_{x_0} e'(x)) = 0$. On the other hand:
\begin{align*}
\omega_{e(x_0)}(\dd_{x_0} e'(x)) &= \dd_{x_0}g (X) + \omega_{e(x_0)}(\dd_{x_0}e(X))\\
\end{align*}
therefore $\dd_{x_0}g (X) = -\omega_{e(x_0)}(\dd_{x_0}e(X))$. We can now compute:
\begin{align*}
s(x) &= [(e(x), v(x))] = [(e(x)\cdot g(x), \rho(g(x)^{-1})v(x))]\\
\nabla_X s (x_0) &= [(e(x_0)\cdot g(x_0), \dd_{x_0}(\rho (g^{-1})v)(X)] \quad \quad\text{(by proposition \ref{principalToLinear})}\\
&= [(e(x_0), \dd_{x_0}(v)(X) - \dd_1\rho \dd_{x_0}g(X)v(x_0))] \quad \quad \text{(chain rule)}\\
&= [(e(x_0), \dd_{x_0}(v)(X) + \dd_1\rho \omega_{e(x_0)}(\dd_{x_0}e(X)) v(x_0))]
\end{align*}
\end{proof}

The construction of proposition \ref{principalToLinear} behaves well with respect to the tensorial product of vector bundle as we see in the following proposition:
\begin{prop}
\label{computationTensorProduct}
Let $E$ be a $G$ principal bundle with a principal connection $\omega$, $\rho_1$ be a representation of $G$ over $V_1$ and $\rho_2$ a representation of $G$ over $V_2$. Let $\mathcal{F}_1$ be the vector bundle associated to $E$ with the representation $\rho_1$ and $\mathcal{F}_2$ the vector bundle associated to $E$ with the representation $\rho_2$. We denote by $\nabla_1$ (resp. $\nabla_2$) the linear connection on $\mathcal{F}_1$ (resp. $\mathcal{F}_2$) obtained from $\omega$ thanks to proposition \ref{principalToLinear}. The bundle $\mathcal{F}_1\otimes\mathcal{F}_2$ is naturally associated to $E$ with the representation $g\mapsto \rho_1(g)\otimes\rho_2(g)$. We denote by $\tilde{\nabla}$ the connection on $\mathcal{F}_1\otimes\mathcal{F}_2$ obtained from $\omega$ by proposition \ref{principalToLinear}. Then we have \[ \nabla_1\otimes \nabla_2 = \tilde{\nabla} \]
\end{prop}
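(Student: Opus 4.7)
The plan is a direct computation using Corollary \ref{computationLinearGeneral}, reducing everything to the identification $d_1(\rho_1\otimes\rho_2)(\xi) = d_1\rho_1(\xi)\otimes \mathrm{Id} + \mathrm{Id}\otimes d_1\rho_2(\xi)$, which is the standard formula for the differential of a tensor product representation at the neutral element.

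First I would reduce to checking the equality on decomposable local sections. Since both $\nabla_1\otimes\nabla_2$ and $\tilde{\nabla}$ are $\R$-linear derivations satisfying the Leibniz rule with respect to multiplication by smooth functions, and since every section of $\mathcal{F}_1\otimes\mathcal{F}_2$ can be written locally as a finite sum $\sum_i s_1^{(i)}\otimes s_2^{(i)}$ of decomposable sections, it suffices to verify $(\nabla_1\otimes\nabla_2)_X(s_1\otimes s_2) = \tilde{\nabla}_X(s_1\otimes s_2)$ for arbitrary $s_1\in\Gamma(\mathcal{F}_1)$ and $s_2\in\Gamma(\mathcal{F}_2)$ at an arbitrary point $x_0\in\mathcal{M}$, with $X\in T_{x_0}\mathcal{M}$.

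Next I would fix a local smooth section $e$ of $E$ on a neighborhood $U$ of $x_0$ and write $s_1 = [(e, v_1)]$, $s_2 = [(e, v_2)]$, so that $s_1\otimes s_2 = [(e, v_1\otimes v_2)]$ under the natural identification of $\mathcal{F}_1\otimes\mathcal{F}_2$ with the bundle associated to $\rho_1\otimes\rho_2$. Applying Corollary \ref{computationLinearGeneral} to each factor and using the definition of a tensor product connection,
\begin{align*}
(\nabla_1\otimes\nabla_2)_X(s_1\otimes s_2) = \bigl[\bigl(e(x_0),\; & (d_{x_0}v_1(X) + d_1\rho_1(\alpha)v_1(x_0))\otimes v_2(x_0) \\
& + v_1(x_0)\otimes (d_{x_0}v_2(X) + d_1\rho_2(\alpha)v_2(x_0))\bigr)\bigr],
\end{align*}
where I have set $\alpha := \omega_{e(x_0)}(d_{x_0}e(X))\in\mathfrak{g}$ for brevity. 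Applying Corollary \ref{computationLinearGeneral} directly to the representation $\rho_1\otimes\rho_2$ gives
\[
\tilde{\nabla}_X(s_1\otimes s_2) = \bigl[\bigl(e(x_0),\; d_{x_0}(v_1\otimes v_2)(X) + d_1(\rho_1\otimes\rho_2)(\alpha)\,(v_1(x_0)\otimes v_2(x_0))\bigr)\bigr].
\]

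The conclusion will follow from two elementary identities: the Leibniz rule $d_{x_0}(v_1\otimes v_2)(X) = d_{x_0}v_1(X)\otimes v_2(x_0) + v_1(x_0)\otimes d_{x_0}v_2(X)$, and the formula $d_1(\rho_1\otimes\rho_2)(\alpha) = d_1\rho_1(\alpha)\otimes\mathrm{Id}_{V_2} + \mathrm{Id}_{V_1}\otimes d_1\rho_2(\alpha)$, obtained by differentiating $t\mapsto \rho_1(\exp(t\alpha))\otimes\rho_2(\exp(t\alpha))$ at $t=0$. Substituting both into the expression for $\tilde{\nabla}_X(s_1\otimes s_2)$ reproduces exactly the expression for $(\nabla_1\otimes\nabla_2)_X(s_1\otimes s_2)$. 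There is no real obstacle here; the only thing to be careful about is the bookkeeping of the two terms produced by the Leibniz rule and the two terms produced by the differential of the tensor product representation, and to check that in each term the argument of the representation acts on the correct factor.
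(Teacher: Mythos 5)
Your proof is correct, and it takes a slightly different computational route from the paper. The paper invokes Lemma \ref{computationLinear} after choosing a horizontal local section $s$ of $E$ (i.e.\ one with $\omega\circ \dd_{x_0}s=0$, available by Remark \ref{sectionPara}); with such a section the connection one-form term drops out entirely, and the whole verification reduces to the Leibniz rule for $\dd_{x_0}(g_1\otimes g_2)$. You instead keep an arbitrary section $e$ and apply Corollary \ref{computationLinearGeneral}, which forces you to carry the $\dd_1\rho_i(\alpha)$ terms and to invoke the identity $\dd_1(\rho_1\otimes\rho_2)(\alpha)=\dd_1\rho_1(\alpha)\otimes\mathrm{Id}+\mathrm{Id}\otimes \dd_1\rho_2(\alpha)$. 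Both approaches are equally valid; the paper's is a bit shorter because the horizontal-section trick kills the curvature-like bookkeeping, while yours makes the Lie-algebraic mechanism behind the compatibility explicit, which some readers may find more transparent. Your reduction to decomposable sections is also fine: the difference of two connections is $C^\infty(\mathcal{M})$-linear, hence tensorial, so vanishing on a pointwise spanning family of sections is enough — this is the same (tacitly used) step as the paper's closing sentence "we use linearity to conclude."
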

\begin{proof}
Let $x_0\in \mathcal{M}$, let $s$ be a local section of $E$ on an open neighborhood $U$ of $x_0$ such that $\omega \circ \dd_{x_0} s = 0$ (it is always possible to find such a section by remark \ref{sectionPara}). Let $f_1$ (resp. $f_2$) be a smooth section of $\mathcal{F}_1$ (resp. $\mathcal{F}_2$) on $U$. We write $f_1 = [(s, g_1)]$ and $f_2 = [(s,g_2)]$ with $g_i: U \rightarrow V_i$ smooth. Then we have $f_1\otimes f_2(x) = [(s(x), g_1(x)\otimes g_2(x))]$. Let $X\in T_{x_0}\mathcal{M}$. We use lemma \ref{computationLinear} to compute
\begin{align*}
\tilde{\nabla}_X \left(f_1\otimes f_2\right) (x_0) &= [(s(x_0), \dd_{x_0}(g_1\otimes g_2)(X))] \\
&= [(s(x_0), \dd_{x_0}g_1(X) \otimes g_2(x_0) + g_1(x_0)\otimes \dd_{x_0}g_2(X))] \text{\hspace{15pt} chain rule and bilinearity of $\otimes$}\\
&= [(s(x_0), \dd_{x_0}g_1(X))]\otimes[(s(x_0), g_2(x_0))] + [(s(x_0), g_1(x_0))]\otimes[(s(x_0), \dd_{x_0}g_2(X))] \\
&= (\nabla_1)_X f_1(x_0) \otimes f_2(x_0) + f_1(x_0) \otimes (\nabla_2)_X f_2(x_0) \text{\hspace{15pt} by lemma \ref{computationLinear}}
\end{align*}
The equality is true for pure product sections and we use linearity to conclude.
\end{proof}

Finally, we give a kind of reverse construction of the previous one when $E$ is the principal bundle of frames of a vector bundle $\mathcal{F}$ of rank $n$ (real or complex, we consider the complex case here).
\begin{prop}
\label{linearToPrincipal}
We assume that we have a linear connection $\nabla$ on $\mathcal{F}$ (complex vector bundle of rank $n$ over a manifold $\mathcal{M}$ of positive dimension). We denote by $E$ the $GL(n,\C)$ principal bundle of frames.
There exists a unique one form $\omega$ on $E$ such that for all local frames $(e_1, ..., e_n)$ around $x$ and all $X\in T_x\mathcal{M}$, \[\omega_{e_1(x),...,e_n(x)}(\dd_x (e_1,...,e_n)(X)) = \underset{e_1(x),...,e_n(x)}{\text{Mat}}(\nabla_X e_1 (x), ... ,\nabla_X e_n(x)) \in \mathcal{M}_n(\C)\]
where $\underset{f_1,...,f_n}{\text{Mat}}(a_1,...,a_n)$ is the unique matrix $M$ such that for all $i\in \llbracket 1, n\rrbracket$, $a_i = \sum_{k=1}^n M_{k,i}f_k$. Moreover, $\omega$ is a principal connection on $E$.
\end{prop}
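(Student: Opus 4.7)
The plan is to first establish that the value of $\omega$ at each point of $E$ is forced by the two conditions defining a principal connection together with the prescription in the statement, and then check that the formula thus obtained is well defined.

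First I would pin down the uniqueness. Fix $a=(f_1,\dots,f_n)\in E$ over $x\in\mathcal{M}$ and $v\in T_aE$. Choose any local frame $s=(e_1,\dots,e_n)$ with $s(x)=a$; then $d_xs(X)\in T_aE$ projects onto $X:=d_a\pi_E(v)$, so $v-d_xs(X)$ lies in $\ker d_a\pi_E$ which, by remark \ref{egaliteEns}, equals $d_1 i_a(\mathfrak{gl}(n,\C))$. Hence there is a unique $\xi\in\mathfrak{gl}(n,\C)$ with $v=d_xs(X)+d_1i_a(\xi)$. The two required properties of a principal connection force
\[
\omega_a(v)=\xi+\underset{f_1,\dots,f_n}{\mathrm{Mat}}\bigl(\nabla_Xe_1(x),\dots,\nabla_Xe_n(x)\bigr),
\]
which gives both existence (take this as a definition) and uniqueness, provided the right-hand side is independent of $s$.

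The main technical point is therefore the well-definedness. If $s'=(e_1',\dots,e_n')$ is another local frame with $s'(x)=a$, there is a smooth $g:U\to GL(n,\C)$ with $g(x)=I$ and $s'=s\cdot g$; differentiating the action map yields $d_xs'(X)=d_xs(X)+d_1i_a(d_xg(X))$, so the corresponding $\xi'$ satisfies $\xi'=\xi-d_xg(X)$. On the other hand the Leibniz rule applied to $e_i'=\sum_k g_{ki}e_k$, combined with $g(x)=I$, gives $\nabla_Xe_i'(x)=\sum_k(d_xg(X))_{ki}f_k+\nabla_Xe_i(x)$, so the matrix picks up exactly $d_xg(X)$. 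The two contributions cancel and the definition is intrinsic. Smoothness of $\omega$ then follows from the smoothness of $s$, $\nabla$ and the matrix construction in any local trivialization.

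It remains to verify that $\omega$ is a principal connection. The normalization $\omega_a(d_1i_a(\xi))=\xi$ is immediate from the definition, corresponding to $X=0$. For the equivariance $Ad_g R_g^*\omega=\omega$, fix $v=d_xs(X)+d_1i_a(\xi)$ and use $R_g\circ s=s\cdot g$ (a local frame at $a\cdot g$) together with the identity $R_g\circ i_a=i_{a\cdot g}\circ c_{g}$ where $c_g(h)=g^{-1}hg$, which differentiates to $d_1c_g=Ad_{g^{-1}}$. Hence $d_aR_g(v)=d_x(s\cdot g)(X)+d_1i_{a\cdot g}(Ad_{g^{-1}}\xi)$. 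Writing $\nabla_X(\sum_k g_{ki}e_k)=\sum_k g_{ki}\nabla_Xe_k$ and changing basis from $(f_i)$ to $(a\cdot g)_i=\sum_k g_{ki}f_k$ transforms the matrix by conjugation, producing $Ad_{g^{-1}}$ of the original matrix. Adding the two pieces gives $\omega_{a\cdot g}(d_aR_g(v))=Ad_{g^{-1}}(\omega_a(v))$, which is the required equivariance. The main obstacle is keeping the matrix conventions and the left/right actions straight across these two computations; everything else is essentially bookkeeping.
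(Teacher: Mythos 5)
Your proof is correct and, in outline, follows the same route as the paper: both rely on the decomposition $T_aE=\dd_x s(T_x\mathcal{M})\oplus\ker\dd_a\pi_E$ (with $\ker\dd_a\pi_E=\dd_1 i_a(\mathfrak{gl}(n,\C))$ by remark \ref{egaliteEns}), and both prove well-definedness by comparing two local frames agreeing at $x$ via a transition function $g$ with $g(x)=I$. The essential difference is organizational. The paper first defines $\omega$ only on the non-vertical vectors $\dd_x s(X)$, argues uniqueness by observing these vectors fill $T_aE\setminus\ker\dd_a\pi_E$, and then separately checks the two axioms of a principal connection, the second (vertical) one requiring the auxiliary frame $(e_1,\dots,e_n)\cdot g(x_1)$ built from a coordinate. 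You instead derive the full formula $\omega_a(v)=\xi+\underset{a}{\text{Mat}}(\nabla_Xe_1,\dots,\nabla_Xe_n)$ for an arbitrary $v=\dd_xs(X)+\dd_1 i_a(\xi)$ as the only possibility compatible with the prescription and the principal-connection axioms. This buys you existence, uniqueness, and the vertical normalization in one stroke, and it makes the existence argument genuinely complete on all of $T_aE$ rather than just the complement of the vertical subspace (a point the paper leaves slightly implicit). Your equivariance computation, via $R_g\circ i_a=i_{a\cdot g}\circ c_g$ and the observation that the change of frame $f\mapsto f\cdot g$ conjugates the matrix by $g$, is a clean restatement of what the paper does by direct manipulation. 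Both arguments are sound.
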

\begin{proof}
For $x \in \mathcal{M}$, we denote by $\Gamma_x(E)$ the set of local smooth section of $E$ defined on some neighborhood of $x$.
To prove the uniqueness, it is enough to remark that for a frame $(f_1,...,f_n)$ at $x$, the set $\left\{ \dd_x (e_1,...,e_n)(X): (e_1,...,e_n)\in \Gamma_x(E) \text{ with }(e_1,...,e_n)(x)=(f_1,...,f_n), X\in T_{x}\mathcal{M}\right\}$ generates $T_{(f_1,...,f_n)}E$. Indeed, by a simple construction in a local trivialization around $x$, we can show that $\left\{ \dd_x (e_1,...,e_n)(X): (e_1,...,e_n)\in \Gamma_x(E) \text{with} (e_1,...,e_n)(x)=(f_1,...,f_n), X\in T_{x}\mathcal{M}\right\}$ is exactly $T_{(f_1,...,f_n)}E\setminus ker (\dd_{(f_1,...,f_n)}\pi_E)$ (so we have the desired conclusion as soon as $\mathcal{M}$ has positive dimension). The existence follows from the observation that both sides of the equality are linear and if $\dd_x (e_1,...,e_n)(X) = \dd_x (e_1',...,e_n')(Y)$, then $(\nabla_X e_1 (x), ... ,\nabla_X e_n(x)) = (\nabla_Y e_1' (x), ... ,\nabla_Y e_n'(x))$.
Now we have to check that $\omega$ defines a principal connection on the bundle of frames. In particular, for $(f_1,...,f_n) \in E$ and $g\in GL(n,\R)$, $Ad_g (R_g^*\omega)_{f_1,...,f_n} = \omega_{f_1,...,f_n}$. By the remark in the proof of uniqueness, it is enough to check it on the vectors on the form $\dd_x (e_1,...,e_n)(X)$ where $(e_1,...,e_n)(x) = (f_1,...,f_n)$. We define $(e_1',...,e_n'):= (e_1, ..., e_n)\cdot g$ for $g \in GL(n,\C)$ and by definition of the right action of $GL(n,\C)$ on $E$, we have $e_i' = \sum_{k=1}^n g_{k,i} e_k$. Then we have $\dd_{(f_1,...,f_n)}R_g \dd_x(e_1,...,e_n)(X) = \dd_x(e_1',..., e_n')(X)$. We use that to compute:
\begin{align*}
R_g^*\omega(\dd_x(e_1,...,e_n)(X)) &= \omega_{(f_1,...,f_n)\cdot g} (\dd_x (e_1',...,e_n')(X)) \\
&= \underset{(f_1, ..., f_n)\cdot g}{\text{Mat}} \left(\nabla_X e_1',...,\nabla_X e_n'\right)\\
 &= \underset{(f_1, ..., f_n)\cdot g}{\text{Mat}} \left(\sum_{k=1}^n(g_{k,1}\nabla_X e_k,...,g_{k,n}\nabla_X e_k)\right)\\
&= g^{-1}\underset{(f_1,...,f_n)}{\text{Mat}}(\nabla_X e_1,..., \nabla_X e_n) g\\
&= (Ad_g)^{-1} \omega(\dd_x(e_1,...,e_n)(X))
\end{align*}
The second property to check is that for $(f_1,...,f_n) \in E$ and $g: (-1,1)\rightarrow GL(n,\C)$ smooth with $g(0)=Id$, we have $\omega(\frac{\dd}{\dd t}_{|t=0}(f_1,...,f_n)\cdot g(t)) = \frac{\dd}{\dd t}_{|t=0}g(t)$.
To see that, we take $(x_1,...,x_n)$ smooth coordinates around $x$ with $(x_1,...,x_n)(x)=0$ and $(e_1,...,e_n)$ a local frame around $x$ such that $(e_1,...,e_n)(x) = (f_1,...,f_n)$. We define the local frame $(e_1',...,e_n'):= (e_1,...,e_n)\cdot g(x_1)$. We compute (using the chain rule)
\begin{align*}
\dd_x(e_1',...,e_n')(\partial_{x_1}) = \frac{\dd}{\dd t}_{|t=0} \left((f_1,...,f_n)\cdot g(t)\right) + \dd_x (e_1,...,e_n)(\partial_{x_1}) \\
\end{align*}
Moreover, we have
\begin{align*}
e_i' = \sum_{k=1}^n g_{k,i}(x_1) e_k
\end{align*}
Therefore $ \nabla_{\partial_{x_1}} e_i' (x) = \sum_{k=1}^n \left(\frac{\dd}{\dd t}_{|t=0} g_{k,i}\right)f_k + \nabla_{\partial_{x_1}}e_k(x)$ and using the definition, we see that\newline $\omega_{(f_1,...,f_n)} (\dd_x(e_1', ..., e_n')(\partial_{x_1})) = \omega_{(f_1,...,f_n)}(\dd_x (e_1,...,e_n)(\partial_{x_1})) + \frac{\dd}{\dd t}_{|t=0}g(t)$. Eventually,we deduce:
\[ \omega_{(f_1,...,f_n)}\left(\frac{\dd}{\dd t}_{|t=0} \left((f_1,...,f_n)\cdot g(t)\right)\right) = \frac{\dd}{\dd t}_{|t=0}g(t). \]
\end{proof}

\begin{prop}The constructions of proposition \ref{principalToLinear} and \ref{linearToPrincipal} are inverse of one another.
\end{prop}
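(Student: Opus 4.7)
The plan is to verify the two compositions separately, and in both directions the key input is Corollary \ref{computationLinearGeneral}, which expresses the linear connection induced by a principal connection in terms of a local frame.

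First, starting from a principal connection $\omega$ on the frame bundle $E$ of $\mathcal{F}$, let $\nabla$ be the linear connection produced by Proposition \ref{principalToLinear} (using the canonical representation $\rho = \mathrm{Id}$ of $GL(n,\C)$ on $\C^n$), and let $\omega'$ be the principal connection built from $\nabla$ via Proposition \ref{linearToPrincipal}. To prove $\omega = \omega'$, I would invoke the uniqueness clause of Proposition \ref{linearToPrincipal} and check that $\omega$ satisfies the defining identity of $\omega'$. Concretely, for a local frame $e = (e_1, \dots, e_n)$ around $x_0$ and $X \in T_{x_0}\mathcal{M}$, each $e_i$ is represented in the associated bundle as $e_i = [(e, v_i)]$ with $v_i$ the $i$-th constant canonical basis vector of $\C^n$. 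Corollary \ref{computationLinearGeneral} (with $v$ constant and $\dd_1\rho = \mathrm{Id}$) then gives $\nabla_X e_i(x_0) = [(e(x_0), \omega_{e(x_0)}(\dd_{x_0}e(X)) v_i)]$, which means the $i$-th column of $\omega_{e(x_0)}(\dd_{x_0}e(X))$ is exactly the column of coordinates of $\nabla_X e_i(x_0)$ in the frame $e(x_0)$. This is precisely the identity defining $\omega'$, so $\omega = \omega'$.

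For the reverse composition, start with a linear connection $\nabla$, let $\omega$ be the principal connection from Proposition \ref{linearToPrincipal}, and let $\nabla'$ be the linear connection recovered via Proposition \ref{principalToLinear}. To check $\nabla = \nabla'$, fix $x_0$, $X \in T_{x_0}\mathcal{M}$, and a section $s$ of $\mathcal{F}$. Pick any local frame $e = (e_1, \dots, e_n)$ around $x_0$ and write $s = \sum_k v_k e_k$, so that $s = [(e, v)]$ in the associated bundle picture. Corollary \ref{computationLinearGeneral} yields
\[
\nabla'_X s = \Bigl[\Bigl(e(x_0),\ \dd_{x_0}v(X) + \omega_{e(x_0)}(\dd_{x_0}e(X))\, v(x_0)\Bigr)\Bigr].
\]
Unpacking the matrix product and using the defining identity of $\omega$ from Proposition \ref{linearToPrincipal}, the $i$-th column of $\omega_{e(x_0)}(\dd_{x_0}e(X))$ gives the coordinates of $\nabla_X e_i(x_0)$ in the frame $e(x_0)$, so the second term translates to $\sum_k v_k(x_0)\nabla_X e_k(x_0)$. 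The first term is $\sum_k \dd_{x_0}v_k(X)\, e_k(x_0)$. Summing up and applying the Leibniz rule to $\nabla$, we get exactly $\nabla_X s(x_0)$.

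The two directions are mostly symmetric applications of Corollary \ref{computationLinearGeneral}, and the only mild subtlety is making sure that the canonical identification between sections of $\mathcal{F}$ and equivariant functions $E \to \C^n$ matches the coordinate reading used in Proposition \ref{linearToPrincipal}; this comes down to the standard fact that under the frame bundle identification, the section $e_i$ corresponds to the constant equivariant function with value $v_i$. I do not anticipate a serious obstacle; the argument is purely bookkeeping in coordinates, and Corollary \ref{computationLinearGeneral} absorbs all the work.
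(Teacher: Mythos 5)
Your proof is correct, and it takes a slightly different route from the paper's. For the composition $\nabla \mapsto \omega \mapsto \nabla'$, the paper chooses an adapted local frame $(e_1,\dots,e_n)$ with $\nabla_X e_i(x_0)=0$ and applies Lemma \ref{computationLinear} (which requires $\omega\circ\dd_{x_0}e=0$); you instead work with an arbitrary frame and let Corollary \ref{computationLinearGeneral} plus the Leibniz rule do the bookkeeping. For the composition $\omega \mapsto \nabla \mapsto \omega'$, the paper decomposes $T_aE = \ker\omega_a \oplus \dd_1 i_a(\mathfrak{g})$, checks that both connections agree trivially on the vertical part, and then verifies vanishing on the horizontal part by picking (via remark \ref{sectionPara}) a section whose differential spans $\ker\omega_a$; you instead compute $\nabla_X e_i$ via Corollary \ref{computationLinearGeneral} for an arbitrary frame, observe that this is precisely the defining identity of $\omega'$, and invoke the uniqueness clause of Proposition \ref{linearToPrincipal}. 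Both are sound: the paper's argument makes the geometric decomposition of the tangent space explicit, while yours delegates that work to the uniqueness statement (whose proof in the paper already contains the observation that vectors of the form $\dd_{x_0}e(X)$ span $T_{e(x_0)}E$), which shortens the verification. Your one flagged "mild subtlety" — that $e_i$ corresponds to the constant equivariant function $v_i$ — is indeed the standard identification and poses no issue.
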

\begin{proof}
Let $Y$ be a local smooth section of $\mathcal{F}$ around $x_0\in \mathcal{M}$ and $X \in T_{x_0} \mathcal{M}$. We denote by $\omega$ the principal connection on $E$ obtained from $\nabla$ by going through the construction of proposition \ref{linearToPrincipal} and $\overline{\nabla}$ the linear connection obtained from $\omega$ by proposition \ref{principalToLinear}. We prove that $\nabla_X Y = \overline{\nabla}_X Y$. To do so we choose a local basis $(e_1,...,e_n)$ around $x_0$ such that $(\nabla_X e_1,...\nabla_X e_n) = 0$ (it is always possible to construct such a basis by working in a local trivialization around $x$). We denote by $f$ the function defined from a neighborhood of $\pi_E^{-1}(\left\{x_0\right\})$ to $\C^n$ by $f(a) = \underset{a}{\text{Mat}}Y(\pi_E(a))$. We define $a_0:= (e_1,...,e_n)(x_0)$. 

By definition of $\omega$, we have $\omega_{a_0}\dd_{x_0}(e_1,...,e_n)(X) = 0$. If we write $Y = [ ((e_1,...,e_n), (Y_1,...,Y_n)) ]$ where $Y_i$ are the coordinates of $Y$ in the local basis $(e_1,...,e_n)$, we can use lemma \ref{computationLinear} to deduce that $\overline{\nabla}_XY(x_0) = [(a_0, (\dd_{x_0}Y_1(X), ... , \dd_{x_0}Y_n(X)))] = \sum_{k=1}^n X(Y_k)(x_0)e_k(x_0)$.

On the other hand 
\begin{align*}
\nabla_X Y &= \sum_{k=1}^n X(Y_k)(x_0)e_k + Y_k \nabla_X e_k(x_0)\\
&= \sum_{k=1}^n X(Y_k)(x_0)e_k \text{ by definition of $(e_1,...,e_k)$}\\
&= \overline{\nabla}_X Y
\end{align*}
We conclude that $\nabla = \overline{\nabla}$.

We also have to prove that if $\omega$ is a principal connection on $E$ and if $\nabla$, linear connection on $\mathcal{F}$ is obtained by proposition \ref{principalToLinear}, then the principal connection $\tilde{\omega}$ on $E$ obtained from $\nabla$ by proposition \ref{linearToPrincipal} is equal to $\omega$. Since we already know that for $x_0\in \mathcal{M}$, $a \in E$ and $h\in \mathfrak{g}$, $\omega \dd_{1} i_{a} (h) = \tilde{\omega} \dd_{1} i_{a} (h) = h$ (by definition of a principal connection) and since $T_{a}E = ker{\omega_a} \oplus \dd_{1}i_a(\mathfrak{g})$ (see lemma \ref{decompoTangent} and remark \ref{egaliteEns}), it is enough to prove that $\tilde{\omega} = 0$ on $ker(\omega_a)$. By remark \ref{sectionPara}, there exists a smooth local section $(e_1,...,e_n)$ on an open neighborhood $U$ around $x_0$ such that $(e_1,...,e_n)(x_0)=a$ and $\dd_{x_0}(e_1,...,e_n)(T_{x_0}\mathcal{M}) = ker(\omega_a)$. We are reduced to proving that $\tilde{\omega_a}\circ\dd_{x_0}(e_1,...e_n) = 0$. By definition of $\tilde{\omega}$, this is the same as proving that for all $i\in\llbracket 1, n\rrbracket$, $\nabla e_i = 0$. By definition of $\nabla$ and lemma \ref{computationLinear} (use the fact that $\dd_{x_0}(e_1,...,e_n)(T_{x_0}\mathcal{M}) = ker(\omega_a)$), it is the same as proving that if $f_i: U\rightarrow \R^n$ is such that $e_i = [(e_1,...,e_n), f_i]$ then $\dd_{x_0} f_i = 0$. This last fact is obvious since $f_i$ are the coordinates of $e_i$ in the local basis $(e_1,...,e_n)$.
\end{proof}

\begin{prop}[pull back of a principal connection]
\label{pullBack}
Let $A$ be a $G_A$ principal bundle over $\mathcal{M}$ and $B$ a $G_B$ principal bundle over the same manifold $\mathcal{M}$. We assume that $G_A$ is an embedded Lie subgroup of $G_B$ and that we have an embedding $f:A\rightarrow B$ (with $\pi_B\circ f=\pi_A$) such that for all $g\in G_A$ and $a\in A$, $f(a\cdot g) = f(a)\cdot g$. For every principal connection $\omega$ on $B$ such that for all $a\in A$, $ker(\omega_{f(a)})\subset T_{f(a)} f(A)$, the one form $f^*\omega$ is a principal connection on $A$.
\end{prop}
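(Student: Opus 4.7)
The plan is to verify the two defining properties of a principal connection for $f^*\omega$, after first checking the non-trivial point that $f^*\omega$ actually takes values in $\mathfrak{g}_A$ (a priori, as a pullback, it is only $\mathfrak{g}_B$-valued). This $\mathfrak{g}_A$-valuedness is exactly what the hypothesis $ker(\omega_{f(a)})\subset T_{f(a)}f(A)$ is designed to give, and it is really the only subtle point of the proposition.

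First I would unpack the equivariance of $f$ at the level of fundamental vector fields. Differentiating $f\circ R_g^A = R_g^B\circ f$ at $g=1$ yields $\dd_a f\circ \dd_1 i_a = \dd_1 i_{f(a)}$ on $\mathfrak{g}_A\subset \mathfrak{g}_B$, so in particular $\dd_1 i_{f(a)}(\mathfrak{g}_A)\subset T_{f(a)}f(A)$. For the converse inclusion, suppose $\dd_1 i_{f(a)}(\eta)=\dd_a f(Y)$ for some $\eta\in \mathfrak{g}_B$ and $Y\in T_a A$. Applying $\dd_{f(a)}\pi_B$ and using $\pi_B\circ f=\pi_A$ gives $\dd_a \pi_A(Y)=0$, so by remark \ref{egaliteEns}, $Y=\dd_1 i_a(\zeta)$ for some $\zeta\in \mathfrak{g}_A$, and injectivity of $\dd_1 i_{f(a)}$ then forces $\eta=\zeta\in \mathfrak{g}_A$. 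Hence $T_{f(a)}f(A)\cap \dd_1 i_{f(a)}(\mathfrak{g}_B)=\dd_1 i_{f(a)}(\mathfrak{g}_A)$.

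Now for $X\in T_a A$, lemma \ref{decompoTangent} applied to $\omega$ on $B$ gives a decomposition $\dd_a f(X)=h+\dd_1 i_{f(a)}(\eta)$ with $h\in ker(\omega_{f(a)})$ and $\eta\in \mathfrak{g}_B$. By hypothesis $h\in T_{f(a)}f(A)$, so $\dd_1 i_{f(a)}(\eta)=\dd_a f(X)-h$ lies in $T_{f(a)}f(A)$, and the previous paragraph gives $\eta\in \mathfrak{g}_A$. Therefore $(f^*\omega)_a(X)=\omega_{f(a)}(\dd_a f(X))=\eta\in \mathfrak{g}_A$, so $f^*\omega$ is $\mathfrak{g}_A$-valued.

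It then remains to verify the two axioms, which are routine manipulations. For equivariance, given $g\in G_A$, differentiating $f\circ R_g^A=R_g^B\circ f$ yields $\dd_{a\cdot g}f\circ \dd_a R_g^A = \dd_{f(a)}R_g^B\circ \dd_a f$, whence $(R_g^* f^*\omega)_a(X)=(R_g^*\omega)_{f(a)}(\dd_a f(X))$, and applying $Ad_g$ (the adjoint action of $G_A$ on $\mathfrak{g}_A$ being the restriction of that of $G_B$ on $\mathfrak{g}_B$) gives back $\omega_{f(a)}(\dd_a f(X))=(f^*\omega)_a(X)$. For the fundamental-vector-field axiom, given $\xi\in \mathfrak{g}_A$, $(f^*\omega)_a(\dd_1 i_a\xi)=\omega_{f(a)}(\dd_1 i_{f(a)}\xi)=\xi$ using the fundamental vector field relation and the corresponding axiom for $\omega$. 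The main obstacle is thus concentrated in the $\mathfrak{g}_A$-valuedness step, where the hypothesis on the horizontal subspace is essential; once that is in place, the two axioms follow formally from the equivariance of $f$.
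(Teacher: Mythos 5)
Your proof is correct. The crux in both arguments is establishing that $(f^*\omega)_a$ takes values in $\mathfrak{g}_A$, but you get there by a different route than the paper. The paper proves the range identity $\mathrm{Ran}(\dd_a f) = \ker\omega_{f(a)} \oplus \dd_1 i_{f(a)}(\mathfrak{g}_A)$ by observing that the right-hand side sits inside the left (hypothesis plus equivariance of $f$) and then doing a dimension count; once this identity is in hand, $\omega_{f(a)}$ of anything in $\mathrm{Ran}(\dd_a f)$ is visibly in $\mathfrak{g}_A$. You instead first prove the intersection identity $T_{f(a)}f(A)\cap \dd_1 i_{f(a)}(\mathfrak{g}_B) = \dd_1 i_{f(a)}(\mathfrak{g}_A)$ by applying $\dd_{f(a)}\pi_B$ and using $\pi_B\circ f=\pi_A$ together with remark \ref{egaliteEns} and the injectivity of the fundamental-vector map; then for arbitrary $X\in T_aA$ you split $\dd_a f(X)$ via lemma \ref{decompoTangent} and use the hypothesis to push the vertical part into $T_{f(a)}f(A)$, landing its Lie-algebra element in $\mathfrak{g}_A$. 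Your argument is a little longer but entirely linear-algebraic and avoids counting dimensions; the paper's is shorter but relies on the reader noticing that the sum is direct and that $\dim A = \dim\mathcal{M}+\dim G_A$. Both versions then dispatch the two connection axioms by the same routine equivariance computations. Either works; the paper's proof is more compressed, yours makes the role of the hypothesis somewhat more transparent.
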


\begin{proof}
We first have to check that $f^*\omega$ has values in $\mathfrak{g}_A$. To prove that, we remark that for all $a\in A$, $Ran(\dd_a f) = ker\omega_{f(a)} \oplus \dd_1 i_{f(a)}\mathfrak{g}_A$. Indeed, the right-hand side is included into the left-hand side and both sides have the same dimension ($\dim(Ran(\dd_a f)) = \text{dim}A = \dim(\mathcal{M})+\dim(G_A)$). Moreover, $\omega( ker\omega_{f(a)} \oplus \dd_1 i_{f(a)}\mathfrak{g}_B) = \mathfrak{g}_A$ so $f^*\omega$ has value in $\mathfrak{g}_A$. The two properties of principal connection for $f^*\omega$ follow directly for the corresponding one for $\omega$.
\end{proof}

\begin{remark}
\label{conditionPullBack}
The condition $ker(\omega_{f(a)})\subset T_{f(a)} f(A)$ takes a particularly simple form when $B$ is the frame bundle of a vector bundle $\mathcal{F}$ and $\omega$ comes from a linear connection $\nabla$ on $\mathcal{F}$ (by proposition \ref{linearToPrincipal}). Indeed, we have the following equivalence:
 $ker(\omega_{f(a)})\subset T_{f(a)} f(A)$ if and only if for all $a\in A$, there exists $(e_1,...,e_n)$ a local section of $f(A)$ around $x_0:=\pi_B(f(A))$ such that $(e_1,...,e_n)(x_0)=a$ and for all $X\in T_{x_0}A$, $(\nabla_X e_1,..., \nabla_X e_n)=0$.
The idea of the proof is the following:
Let $a\in A$ and $x_0:=\pi_B(f(a))$. Assume $ker(\omega_{f(a)})\subset T_{f(a)} f(A)$, then we can find a submanifold $C$ of $f(A)$ of dimension $\text{dim}\mathcal{M}$ containing $f(a)$ with tangent space $ker(\omega_{f(a)})$ at $f(a)$. We know that $ker(\omega_{f(a)})\cap ker(\dd_{f(a)}\pi_B)=\left\{0\right\}$, as a consequence $C$ defines a section of $f(A)$ in a small neighborhood of $f(a)$ this section is the one we are looking for. Reciprocally, if we have a section $(e_1,...,e_n)$ with the required properties, $\dd_x(e_1,...,e_n)(T_{x_0}\mathcal{M})\subset ker(\omega_{f(a)})$ and has the same dimension, so $ker(\omega_{f(a)})=\dd_x(e_1,...,e_n)(T_{x_0}\mathcal{M}) \subset T_{f(a)} f(A)$.
\end{remark}

A typical situation where the pull back appears is the following:
\begin{prop}
\label{pullBackFrame}
Let $A$ be a $G$ principal bundle and $\rho:G\rightarrow GL(\C^n)$ a representation of $G$ on $\C^n$ (we can also replace $\C^n$ by $\R^n$) which is an embedding of Lie-groups. Let $\mathcal{F}$ be the vector bundle associated to $A$ with the representation $\rho$. Let $\omega$ be a principal connection on $A$ and $\nabla$ the connection on $\mathcal{F}$ constructed by proposition \ref{principalToLinear}. Let $E$ be the principal bundle of frames on $\mathcal{F}$ and $\omega_E$ be the principal connection on $E$ given by proposition \ref{linearToPrincipal}. Then if we define the embedding: 
\[
f:
\begin{cases}
A \rightarrow E\\
a \mapsto ([a,e_1], [a,e_2], ..., [a,e_n])
\end{cases}
\]
where $(e_1, ..., e_n)$ is the canonical basis of $\C^n$. Then $f^*\omega_E = \dd_1\rho \circ \omega$.
\end{prop}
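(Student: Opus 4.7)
The plan is to verify the equality $f^*\omega_E = \dd_1\rho \circ \omega$ pointwise by decomposing each tangent space according to Lemma \ref{decompoTangent}: for $a \in A$, one has $T_a A = \ker(\omega_a) \oplus \dd_1 i_a(\mathfrak{g})$, so it suffices to check the equality separately on the vertical part $\dd_1 i_a(\mathfrak{g})$ and on the horizontal part $\ker(\omega_a)$. A preliminary observation I would use throughout is that $f$ is equivariant with respect to $\rho$: a direct check shows $f(a\cdot g) = f(a)\cdot \rho(g)$, where the right-hand side uses the canonical right action of $GL(n,\C)$ on $E$ (one unfolds both sides: $[a\cdot g, e_i] = [a, \rho(g)e_i] = \sum_k \rho(g)_{k,i}[a,e_k]$).

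For the vertical part, let $\xi \in \mathfrak{g}$. Differentiating the equivariance relation at the identity gives $\dd_a f \circ \dd_1 i_a(\xi) = \dd_1 i_{f(a)}(\dd_1\rho(\xi))$. Applying $\omega_E$ at $f(a)$ and using the defining property of the principal connection $\omega_E$, one obtains $f^*\omega_{E}(\dd_1 i_a(\xi)) = \dd_1\rho(\xi)$. On the other hand, $\omega_a(\dd_1 i_a \xi) = \xi$, hence $\dd_1\rho\circ\omega_a(\dd_1 i_a \xi) = \dd_1\rho(\xi)$ as well. So the two one-forms agree on the vertical space.

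For the horizontal part, I would use Remark \ref{sectionPara} to pick, for any given $a \in A$ with $x_0 := \pi_A(a)$, a local smooth section $s$ of $A$ on a neighborhood $U$ of $x_0$ with $s(x_0) = a$ and $\dd_{x_0}s(T_{x_0}\mathcal{M}) = \ker(\omega_a)$. The composite $f\circ s$ is then a local frame of $\mathcal{F}$ whose components are the sections $E_i(x) := [(s(x), e_i)]$. Since the constant representative $e_i$ has zero differential, Lemma \ref{computationLinear} gives $\nabla_X E_i(x_0) = 0$ for every $X \in T_{x_0}\mathcal{M}$ and every $i$. By the defining property of $\omega_E$ in Proposition \ref{linearToPrincipal}, we conclude that $\omega_E(\dd_{x_0}(f\circ s)(X)) = 0$, i.e. $f^*\omega_E$ vanishes on $\ker(\omega_a)$. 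The same holds trivially for $\dd_1\rho\circ\omega$ since $\omega$ vanishes on $\ker(\omega_a)$.

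Combining the two computations and using linearity across the direct sum decomposition yields the desired identity. The main subtlety to handle carefully is the equivariance computation at the level of the frame bundle action in the vertical part (making sure that the action of $GL(n,\C)$ on $E$ interacts correctly with the quotient defining $[a, e_i]$); the horizontal part is straightforward once the right local section is chosen via Remark \ref{sectionPara}.
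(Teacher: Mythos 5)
Your proof is correct and follows essentially the same route as the paper: check equivariance of $f$, differentiate it to handle the vertical subspace $\dd_1 i_a(\mathfrak{g})$, and use Remark \ref{sectionPara} together with Lemma \ref{computationLinear} to show both one-forms vanish on $\ker(\omega_a)$, concluding via the decomposition of Lemma \ref{decompoTangent}. The only cosmetic difference is that you phrase the reduction as an explicit vertical/horizontal case split, whereas the paper first establishes the vertical identity and then reduces the remaining verification to the horizontal subspace; the substance is identical.
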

\begin{proof}
First, we can check that for all $a\in A$ and $g\in G$, $f(a\cdot g) = ([a,e_1], ..., [a,e_n])\cdot\rho(g)$. In particular for $a\in A$, $\dd_1 (f\circ i_a) = \dd_a f \dd_1 i_a = \dd_1 i_{f(a)} \dd_1 \rho$. By the definition of a principal connection, we know that $\dd_1 \rho \omega_a \dd_1 i_a (h) = \dd_1 \rho(h)$ and also $ (\omega_E)_{f(a)} \dd_1 i_{f(a)}(\dd_1\rho(h)) = \dd_1\rho(h)$. We deduce that $\dd_1 \rho \omega_a \dd_1 i_a (h)= (\omega_E)_{f(a)} \dd_1 i_{f(a)}(\dd_1\rho(h))= (\omega_E)_{f(a)}\dd_a f \dd_1 i_a(h)$. Since $T_{\pi_A(a)}A = \dd_1 i_a (\mathfrak{g}) \oplus ker(\omega_a)$, we are now reduced to checking that $(\omega_E)_{f(a)} \dd_a f (ker(\omega_a)) = 0$. We take $s$ a local section of $A$ in an open neighborhood $U$ of $x_0$ such that $s(x_0)=a$ and $\dd_{x_0} s(T_{x_0}\mathcal{M}) = ker(\omega_a)$. Then by lemma \ref{computationLinear}, $\nabla [s, e_i] = 0$. We deduce that for $X\in T_{x_0}\mathcal{M}$, $(\nabla_X [s, e_1], ..., \nabla_X [s,e_n]) = 0$. Finally, using the definition of $\omega_E$ we deduce that $(\omega_E)_{f(a)} ( \dd_{x_0} f\circ s(X)) = \underset{f(a)}{\text{Mat}}(\nabla_X [s, e_1], ..., \nabla_X [s,e_n]) =0$. Therefore $(\omega_E)_{f(a)}\circ \dd_{x_0} f (ker(\omega_A)) = 0$
\end{proof}

In some cases of interest, the condition in remark \ref{conditionPullBack} is not satisfied. In these cases, the pull back is not a connection because the image is not contained in the Lie subalgebra $\mathfrak{g}_A$. The goal of the following proposition is to correct this by composing (to the left) by a projection on $\mathfrak{g}_A$. The main downside with this construction is that we have several choices for the projection leading to different connections. However, in cases we are interested in here, there is a particularly natural choice (see the remark after the proof).

\begin{prop}[Pull Back of a connection in more complicated cases]
\label{pullBack2}
Let $A$ be a $G_A$ principal bundle over $\mathcal{M}$ and $B$ a $G_B$ principal bundle over the same manifold $\mathcal{M}$. We assume that $G_A$ is an embedded Lie subgroup of $G_B$ and that we have an embedding $f:A\rightarrow B$ such that for all $g\in G_A$ and $a\in A$, $f(a\cdot g) = f(a)\cdot g$. For simplicity, we identify implicitly the Lie algebra of $A$ and the Lie algebra of $f(A)$. Assume that we have a subspace $V$ of $\mathfrak{g}_B$ such that:
\begin{itemize}
\item $V\oplus \mathfrak{g}_A = \mathfrak{g}_B$
\item $\forall g\in G_A, Ad_g(V) = V$
\end{itemize}
We denote by $q: \mathfrak{g}_B \rightarrow \mathfrak{g}_A$ the projection on $\mathfrak{g}_A$ with kernel $V$.
 For every principal connection $\omega$ on $B$, we can define the $\mathfrak{g}_A$-valued one form $\tilde{\omega}:= q\circ f^*\omega$. Then $\tilde{\omega}$ is a principal connection on $A$.
\end{prop}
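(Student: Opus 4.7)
The plan is to directly verify the two defining axioms of a principal connection for $\tilde{\omega} = q\circ f^*\omega$. Throughout I will use the equivariance $f(a\cdot g) = f(a)\cdot g$ (for $g\in G_A$) to transport structure on $B$ back to $A$, and the $Ad_{G_A}$-invariance of $V$ to commute $q$ with $Ad_{g^{-1}}$.

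First I would check the ``vertical'' axiom. Fix $a\in A$ and $\xi \in \mathfrak{g}_A$. Differentiating the equivariance relation $f\circ i_a = i_{f(a)}$ (restricted to $G_A$) at $1\in G_A$ gives $\dd_a f\circ \dd_1 i_a = \dd_1 i_{f(a)}$ as maps $\mathfrak{g}_A \to T_{f(a)}B$. Since $\omega$ is a principal connection on $B$, this yields $(f^*\omega)_a(\dd_1 i_a \xi) = \omega_{f(a)}(\dd_1 i_{f(a)}\xi) = \xi \in \mathfrak{g}_A$. As $q$ restricts to the identity on $\mathfrak{g}_A$, we conclude $\tilde{\omega}_a(\dd_1 i_a \xi) = \xi$.

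Next I would check the equivariance axiom. Let $g\in G_A$. The relation $f\circ R_g = R_g\circ f$ gives $R_g^*f^*\omega = f^*R_g^*\omega$, and the principal connection property for $\omega$ gives $R_g^*\omega = Ad_{g^{-1}}\omega$. Hence
\[
R_g^*\tilde{\omega} = q\circ Ad_{g^{-1}}\circ f^*\omega.
\]
The key algebraic fact is that $q\circ Ad_{g^{-1}} = Ad_{g^{-1}}\circ q$ as endomorphisms of $\mathfrak{g}_B$: writing $\xi = \xi_A + v$ with $\xi_A \in \mathfrak{g}_A$ and $v\in V$, the component $Ad_{g^{-1}}(\xi_A)$ lies in $\mathfrak{g}_A$ (since $G_A$ is a Lie subgroup) while $Ad_{g^{-1}}(v)$ lies in $V$ by the $Ad_{G_A}$-invariance of $V$, so applying $q$ to $Ad_{g^{-1}}(\xi)$ recovers $Ad_{g^{-1}}(\xi_A) = Ad_{g^{-1}}(q(\xi))$. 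Composing with $Ad_g$ on the left gives
\[
Ad_g R_g^*\tilde{\omega} = Ad_g\circ Ad_{g^{-1}}\circ q\circ f^*\omega = q\circ f^*\omega = \tilde{\omega},
\]
which is exactly the first axiom.

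The only real subtlety is the commutation $q\circ Ad_{g^{-1}} = Ad_{g^{-1}}\circ q$, and the hypotheses of the proposition have been tailored precisely to supply it: without the $Ad_{G_A}$-invariance of $V$, $q$ would not be $G_A$-equivariant and the pulled-back, projected form would fail to transform correctly under $R_g^*$. Everything else is bookkeeping with the principal-connection axioms already satisfied by $\omega$, together with the equivariance of $f$.
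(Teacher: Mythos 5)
Your proof is correct and follows essentially the same route as the paper: both arguments hinge on the commutation $q\circ Ad_g = Ad_g\circ q$ for $g\in G_A$ (supplied by the $Ad_{G_A}$-invariance of $V$ together with $Ad_g(\mathfrak{g}_A)\subset\mathfrak{g}_A$), use the equivariance of $f$ to pull back the $Ad$-equivariance axiom, and note that the vertical axiom is immediate because $q$ restricts to the identity on $\mathfrak{g}_A$. The paper unpacks the equivariance step via an explicit chain of differentials rather than invoking $R_g^*\omega = Ad_{g^{-1}}\omega$ in one stroke, but this is just a difference in bookkeeping, not in substance.
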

\begin{proof}
For all $g\in G_A$, we have $Ad_g(\mathfrak{g}_A)\subset \mathfrak{g}_A$ and $Ad_g(V)\subset V$ so $Ad_g$ commutes with $q$.
Let $g \in G_A$ and $a\in A$, 
\begin{align*}
Ad_g R_g^*\tilde{\omega}_a&= Ad_g \tilde{\omega}_{a\cdot{g}}\circ \dd_aR_g \\
&= Ad_g\circ q (f^*\omega)_{a\cdot{g}}\circ \dd_aR_g\\
&= q\circ Ad_g \omega_{f(a\cdot g)} \circ \dd_{a\cdot g} f \circ \dd_aR_g \\
&= q\circ Ad_g \omega_{f(a)\cdot g} \circ \dd_{f(a)}R_g \circ \dd_a f(a) \\
&= q\circ Ad_g (R_g^* \omega)_{f(a)} \circ \dd_a f(a)\\
&= q \circ (f^* (Ad_g R_g^*\omega))_a\\
&= q \circ (f^*\omega)_a\\
&= \tilde{\omega}_a.
\end{align*}
The second property of principal connections is immediate since $q$ is the identity on $\mathfrak{g}_A$.
\end{proof}

\begin{remark}
\label{remarkKillingForm}
In some cases, there is a natural choice for $V$. We recall that the Killing form of $\mathfrak{g}_B$ is by definition the symmetric bilinear form $K(x,y) = \text{tr}(ad_x ad_y)$ (where $ad_x$, $ad_y$ are considered as endomorphisms of $\mathfrak{g}_B$). Note that the Killing form is invariant under every automorphism of Lie algebra of $\mathfrak{g}_B$. When $K$ is non degenerate, we say that $\mathfrak{g}_B$ is semisimple. If moreover, $K_{|_{\mathfrak{g}_A\times\mathfrak{g}_A}}$ is also non degenerate, we can consider the very natural choice $V:= \mathfrak{g}_{A}^\perp$ where $\mathfrak{g}_A^\perp$ is the orthogonal with respect to $K$ (note that the two non degeneracy conditions imply that $V\oplus\mathfrak{g}_A = \mathfrak{g}_B$). The fact that $Ad_g(V) = V$ for all $g\in G_A$ is then given by the invariance of the Killing form by the Lie algebra automorphism (preserving $\mathfrak{g}_A$ since $g\in G_A$) $Ad_g$.
\end{remark}

We conclude this section by the following proposition that we do not prove (the proof follows quite easily from the definitions)
\begin{prop}
\label{pullBackCovering}
Let $A$ be a $G_A$ principal bundle over $\mathcal{M}$ and $B$ be a $G_B$ principal bundle over the same manifold $\mathcal{M}$. We assume that we have a covering Lie group morphism $\tilde{f}: G_A\rightarrow G_B$ and a smooth covering map $f:A\rightarrow B$ such that $\pi_B\circ f = \pi_A$ and for all $g\in G_A$ and $a\in A$, $f(a\cdot g) = f(a)\cdot \tilde{f}(g)$. Then, if $\omega$ is a principal connection on $B$, $f^*\omega$ is a principal connection on $A$.
\end{prop}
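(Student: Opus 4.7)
The plan is to verify the two defining axioms of a principal connection for $f^*\omega$, after identifying $\mathfrak{g}_A$ with $\mathfrak{g}_B$ via $d_1\tilde{f}:\mathfrak{g}_A\to\mathfrak{g}_B$, which is a Lie algebra isomorphism because $\tilde{f}$ is a covering (hence a local diffeomorphism at $1$). More precisely, the $\mathfrak{g}_A$-valued form one wants to show is a principal connection is $(d_1\tilde{f})^{-1}\circ f^*\omega$, but I will abusively denote it by $f^*\omega$ below, as suggested by the statement.

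First I would check the second axiom (reproduction of fundamental vector fields). The equivariance hypothesis $f(a\cdot g)=f(a)\cdot\tilde{f}(g)$ reads $f\circ i_a = i_{f(a)}\circ\tilde{f}$, so differentiating at $1\in G_A$ yields $d_a f\circ d_1 i_a = d_1 i_{f(a)}\circ d_1\tilde{f}$. For $\xi\in\mathfrak{g}_A$ this gives
\[
(f^*\omega)_a(d_1 i_a(\xi)) = \omega_{f(a)}\bigl(d_1 i_{f(a)}(d_1\tilde{f}(\xi))\bigr) = d_1\tilde{f}(\xi),
\]
by the corresponding axiom for $\omega$, which under the identification is exactly $\xi$.

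Next I would verify the $G_A$-equivariance. From $f\circ R_g = R_{\tilde{f}(g)}\circ f$ one gets $R_g^*(f^*\omega) = f^*(R_{\tilde{f}(g)}^*\omega) = f^*(Ad_{\tilde{f}(g)^{-1}}\omega) = Ad_{\tilde{f}(g)^{-1}}(f^*\omega)$, using the equivariance of $\omega$. Differentiating the morphism identity $\tilde{f}(ghg^{-1})=\tilde{f}(g)\tilde{f}(h)\tilde{f}(g)^{-1}$ at $h=1$ yields the intertwining relation $d_1\tilde{f}\circ Ad_g = Ad_{\tilde{f}(g)}\circ d_1\tilde{f}$, so under the identification via $d_1\tilde{f}$ the action of $Ad_{\tilde{f}(g)^{-1}}$ on $\mathfrak{g}_B$ corresponds to the action of $Ad_{g^{-1}}$ on $\mathfrak{g}_A$. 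Applying $Ad_g$ then yields $Ad_g R_g^*(f^*\omega)=f^*\omega$.

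No step presents a real obstacle: the fact that $f$ is a covering map plays no role in the computations themselves beyond guaranteeing that $d_1\tilde{f}$ is a Lie algebra isomorphism so that the identification is available. The only mild subtlety is bookkeeping between $Ad$ in $G_A$ and in $G_B$, which is entirely handled by the intertwining relation $d_1\tilde{f}\circ Ad_g = Ad_{\tilde{f}(g)}\circ d_1\tilde{f}$.
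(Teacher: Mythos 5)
Your proof is correct. The paper explicitly declines to prove this proposition, stating only that it ``follows quite easily from the definitions,'' and your verification of both axioms of a principal connection confirms that assessment: the reproduction axiom follows from differentiating the equivariance identity $f\circ i_a = i_{f(a)}\circ\tilde{f}$, and the $\mathrm{Ad}$-equivariance follows from $f\circ R_g = R_{\tilde{f}(g)}\circ f$ together with the intertwining relation $\dd_1\tilde{f}\circ \mathrm{Ad}_g = \mathrm{Ad}_{\tilde{f}(g)}\circ \dd_1\tilde{f}$. You are also right to flag that the statement implicitly identifies $\mathfrak{g}_A$ with $\mathfrak{g}_B$ via the Lie algebra isomorphism $\dd_1\tilde{f}$, which is the only place the covering hypothesis enters; making this bookkeeping explicit is a useful clarification of what the paper leaves implicit.
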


\subsection{Spin connection, GHP connection and GHP operators}
For deeper geometric insight on the definitions of connections and operators, see \cite{harnett1990ghp}.
\subsubsection{Spin connection}
We denote by $\nabla$ the Levi-Civita connection on $\mathcal{M}$. Using proposition \ref{linearToPrincipal}, we can define a principal connection $\omega$ on the space of complex tangent frames $E$. 
We now want to apply proposition \ref{pullBack} (and remark \ref{conditionPullBack}) to the various principal bundles previously defined.
We have an embedding of principal bundles $f: \mathfrak{O} \rightarrow E$. We use the following proposition and proposition \ref{pullBack} (and remark \ref{conditionPullBack}) to show that $f^*\omega$ is a principal connection on $\mathfrak{O}$. We still call it $\omega$.
\begin{prop}
\label{normalBON}
Let $y_0 \in \mathcal{M}$.
For all $(f_0,...,f_3) \in \mathfrak{O}_{y_0}$, there exists a local smooth section $(e_0,...,e_3)$ (around $y_0$) of $\mathfrak{O}$ such that $(e_0,...,e_3)(y_0)=(f_0,...,f_3)$ and for all $X \in T_{y_0}\mathcal{M}, (\nabla_X e_0, ..., \nabla_X e_3) = 0$. 
\end{prop}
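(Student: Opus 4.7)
The plan is to build the section $(e_0,\dots,e_3)$ by parallel transporting $(f_0,\dots,f_3)$ out of $y_0$ along radial geodesics, and then verify that the three requirements (orthonormality + orientation, smoothness, vanishing of covariant derivatives at $y_0$) are satisfied by this construction. This is the standard ``radially parallel frame'' attached to a normal neighborhood.

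Concretely, I would choose a normal neighborhood $U$ of $y_0$ on which $\exp_{y_0}: V \subset T_{y_0}\mathcal{M} \to U$ is a diffeomorphism, and define, for every $y\in U$, $e_i(y) := P_{\gamma_y}(f_i)$, where $\gamma_y$ is the unique geodesic from $y_0$ to $y$ inside $U$ and $P_{\gamma_y}$ denotes parallel transport along $\gamma_y$ with respect to the Levi-Civita connection. Smoothness of $(e_0,\dots,e_3)$ on $U$ follows from the smooth dependence of the solution of the parallel-transport ODE on initial conditions, combined with the smoothness of $\exp_{y_0}$. Since $\nabla$ is metric-compatible, parallel transport is an isometry, so $g(e_i, e_j)$ is constant along each radial geodesic and equal to $g(f_i, f_j)$ at $y_0$; hence $(e_0,\dots,e_3)$ is orthonormal on $U$. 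The orientation conditions entering the definition of $\mathfrak{O}$ (future-orientation of $e_0$ and positivity of the spatial triple in the 1+3 decomposition) are open conditions, and by construction they hold at $y_0$; shrinking $U$ if necessary we may therefore assume $(e_0(y),\dots,e_3(y))\in \mathfrak{O}_y$ for every $y\in U$.

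It remains to check the key vanishing $\nabla_X e_i(y_0)=0$ for every $X\in T_{y_0}\mathcal{M}$. Let $\gamma_X(t) := \exp_{y_0}(tX)$ for $t$ in a small interval around $0$, so that $\gamma_X(0)=y_0$ and $\dot\gamma_X(0)=X$. By the very definition of our section, $t\mapsto e_i(\gamma_X(t))$ coincides with the parallel transport of $f_i$ along $\gamma_X$. Hence
\[
\nabla_X e_i(y_0) \;=\; \nabla_{\dot\gamma_X(0)} e_i \;=\; \tfrac{D}{dt}\bigl(e_i\circ \gamma_X\bigr)\bigr|_{t=0} \;=\; 0.
\]
As $X$ was arbitrary, this gives the required conclusion, and the section $(e_0,\dots,e_3)$ on $U$ satisfies all three properties demanded by the proposition.

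The only genuine care point is verifying that the constructed section actually lands in $\mathfrak{O}$ rather than merely in the bundle of (unoriented) orthonormal frames: time-orientation of $e_0$ and the orientation of the spatial triple require a continuity/shrinking argument, which is easy but should not be omitted. All other steps are immediate consequences of the metric compatibility of $\nabla$ and the definition of parallel transport.
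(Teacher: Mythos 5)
Your proof is correct, but it takes a genuinely different route from the paper's. The paper works in normal coordinates centered at $y_0$: it sets $(\partial_{x_0},\dots,\partial_{x_3})(y_0)=(f_0,\dots,f_3)$, applies Gram--Schmidt to the coordinate frame, writes $(e_0,\dots,e_3)=(\partial_{x_0},\dots,\partial_{x_3})S$ with $S$ expressed in terms of the metric coefficients, and uses the vanishing of the first derivatives of $g_{ij}$ at the center of normal coordinates to conclude $X(S)(y_0)=0$ and hence $\nabla_X e_i(y_0)=0$. You instead build the frame by radial parallel transport $e_i(y)=P_{\gamma_y}(f_i)$: then $\nabla_X e_i(y_0)=0$ holds by construction (the restriction of $e_i$ to the radial geodesic $\gamma_X$ is parallel, so its covariant derivative at $t=0$ vanishes), and orthonormality follows from metric compatibility rather than from an explicit Gram--Schmidt factor. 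Your route is a bit more conceptual and avoids the Gram--Schmidt computation, while the paper's is more explicitly computational and hangs on standard facts about normal coordinates; both are standard constructions of a frame that is ``geodesic'' at a point. One small refinement on your side: you invoke a shrinking argument to ensure the frame lands in $\mathfrak{O}$, but shrinking is in fact unnecessary. Parallel transport by the Levi--Civita connection preserves orthonormality exactly, and since $\mathfrak{O}$ is a union of connected components of the full orthonormal frame bundle, the continuous family $y\mapsto(e_0(y),\dots,e_3(y))$ over the connected set $U$, which starts in $\mathfrak{O}_{y_0}$, remains in $\mathfrak{O}$ automatically.
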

\begin{proof}
We define $(x_0, ..., x_3)$ local normal coordinates on a neighborhood of $y_0$ such that $(\partial_{x_0},...,\partial_{x_3})(y_0) = (f_0,...,f_3)$. Then we denote by $g_{i,j}$ the metric coefficients in these coordinates. Because coordinates are normal we have for all $X\in T_{y_0}\mathcal{M}$, for all $i\in\llbracket 0, 3\rrbracket$, $\nabla_{X}\partial_{x_i} = 0$, $g_{i,j}(y_0) = \delta_{i,j}$ and $g$ has vanishing first derivatives at $y_0$. We define $(e_0,...,e_3)$ as the Schmidt orthonormalization of $(\partial_{x_0},...,\partial_{x_3})$. As a consequence there is a smooth family of upper triangular invertible matrices $S(y)$ that we can express explicitly with respect to the coefficients $g_{i,j}$ such that $S(y_0) = Id$ and $(e_1,...,e_n)(y) = (\partial_{x_0},...,\partial_{x_3})(y)S(y)$. Then we have, for $X\in T_{y_0}\mathcal{M}$, $(\nabla_X e_1, ..., \nabla_X e_n) = (\nabla_X \partial_{x_0},..., \nabla_X \partial_{x_3}) + (\partial_{x_0},...,\partial_{x_3})(y) X(S)(y_0)$ where $X(S)(y_0)$ is the derivation coefficient by coefficient. Because all the derivatives of the metric coefficients $g_{i,j}$ vanish at $y_0$, we have $X(S)(y_0) = 0$ and $(\nabla_X e_1, ..., \nabla_X e_n)= 0$.
\end{proof}

We can then use proposition \ref{pullBackCovering} to define a connection $\omega$ on $\mathfrak{S}$ by pulling back the principal connection on $\mathfrak{O}$. We can also define a linear connection $\nabla$ on the spinor bundle $\mathcal{S}$ and on $\overline{\mathcal{S}}$ by using proposition \ref{principalToLinear}. Concretely, note that for a real vector field $X$ and a section $\overline{a}$ of $\overline{\mathcal{S}}$, we have the equality $\nabla_X \overline{a} = \overline{\nabla_X a}$ (it follows from the definitions). Therefore, by $\C$-linearity, if $X$ is a complex vector field, $\nabla_X \overline{a} = \overline{\nabla_{\overline{X}} a}$. We used the same notation $\nabla$ and $\omega$ for connections on different bundles (and we use the context to remove ambiguity). To show that all these definitions are coherent, it is useful to prove the following proposition:

\begin{prop}
\label{idLCspin}
We have $j^*\nabla = \nabla \otimes \nabla$ where the left hand side is the Levi-Civita connection on $T_{\C}\mathcal{M}$ and the right hand side is the connection on $\mathcal{S}\otimes \overline{\mathcal{S}}$.
\end{prop}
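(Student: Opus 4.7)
The plan is to realize both connections as arising from the same principal connection $\omega$ on the spin bundle $\mathfrak{S}$, applied via two representations which coincide thanks to equation \eqref{coresSpinVect}.

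First, I would observe that by proposition \ref{computationTensorProduct} applied to the principal bundle $\mathfrak{S}$ with the representations $\rho$ on $\C^2$ and $\overline{\rho}$ on $\overline{\C^2}$, the tensor product connection $\nabla\otimes\nabla$ on $\mathcal{S}\otimes\overline{\mathcal{S}}$ is exactly the connection on the associated bundle $\mathcal{S}\otimes\overline{\mathcal{S}}$ obtained from $\omega$ via the representation $g\mapsto \rho(g)\otimes\overline{\rho}(g)$.

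Second, I would show that the Levi-Civita connection on $T_\C\mathcal{M}$, transported to $\mathcal{S}\otimes\overline{\mathcal{S}}$ by $j^{-1}$, is also obtained from $\omega$ on $\mathfrak{S}$ via the representation $g\mapsto \mu(\tilde{p}(g))$, using the identification $i_0$ implicit in $j$. This decomposes into two sub-steps. (i) The Levi-Civita connection on $T_\C\mathcal{M}$ equals the connection obtained from the principal connection on $\mathfrak{O}$ (the pullback of $\omega_E$ by the inclusion $\mathfrak{O}\hookrightarrow E$) applied to the canonical representation $\mu$ of $SO^+(1,3)$ on $\C^4$; this is essentially the content of proposition \ref{pullBackFrame} together with the construction of $\omega$ on $\mathfrak{O}$ described right before proposition \ref{normalBON}. (ii) The same connection is obtained when one pulls $\omega$ back to $\mathfrak{S}$ via the covering $p$ (see proposition \ref{pullBackCovering}) and then forms the associated bundle with $\mu\circ\tilde{p}$; concretely, for any local section $s$ of $\mathfrak{S}$ with $x_0 := \pi_{\mathfrak{S}}(s(x_0))$ and any local expression of a section as $[(s,v)]$, corollary \ref{computationLinearGeneral} gives
\[
\nabla_X = \dd_{x_0}v(X) + \dd_1(\mu\circ\tilde{p})\,\omega_{s(x_0)}(\dd_{x_0}s(X))\cdot v(x_0),
\]
which by the chain rule $\dd_1(\mu\circ\tilde{p}) = \dd_1\mu\circ \dd_1\tilde{p}$ and the relation $(p^*\omega_{\mathfrak{O}})_{s(x_0)} = \dd_1\tilde{p}\circ \omega_{s(x_0)}$ (from proposition \ref{pullBackCovering}) agrees with the expression obtained via $\mathfrak{O}$ using the section $p\circ s$.

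Third, I would invoke equation \eqref{coresSpinVect}, which states $\rho(g)\otimes\overline{\rho}(g) = \mu(\tilde{p}(g))$ under $i_0$. This means the two representations of $SL(2,\C)$ used above are literally the same once $\mathcal{S}\otimes\overline{\mathcal{S}}$ and $T_\C\mathcal{M}$ are identified through $j$ (which globalises $i_0$ by the proposition that defined $j$). Since both $j^*\nabla_{LC}$ and $\nabla\otimes\nabla$ arise from $\omega$ on $\mathfrak{S}$ via the same representation, they are equal. The main obstacle is really just the bookkeeping in step (ii) to verify that the covering map and the representation compose as expected in the formula of corollary \ref{computationLinearGeneral}; once this is done, the conclusion is immediate from \eqref{coresSpinVect}.
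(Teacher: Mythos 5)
Your plan is sound and reaches the same conclusion as the paper, but the execution is genuinely different in flavor. The paper's proof does not decompose into a chain of compatibility statements between associated-bundle connections; instead it picks a single well-chosen local section of $\mathfrak{O}$ (provided by proposition \ref{normalBON}, i.e.\ a normal orthonormal frame with $\nabla_X e_i(x_0)=0$), lifts it to $\mathfrak{S}$, and notes that in the resulting pair of compatible trivializations $j$ is literally $\mathrm{id}\times i_0$ and both connections are computed by lemma \ref{computationLinear} as plain differentiation of the coordinate functions. Linearity of $i_0$ then closes the argument in one line. Your approach expresses the same idea at the level of principal connections and representations: realize both $\nabla\otimes\nabla$ and $j^*\nabla_{LC}$ as coming from $\omega$ on $\mathfrak{S}$ via $\rho\otimes\overline{\rho}$ and $\mu\circ\tilde p$ respectively, then quote \eqref{coresSpinVect}. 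The benefit of the paper's concrete choice of section is that it collapses your steps (i) and (ii) entirely: no bookkeeping with $\dd_1\tilde p$, $\dd_1\mu$, $\dd_1\rho$ is needed, because at the chosen section every connection coefficient vanishes.

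One caveat on your step (i): proposition \ref{pullBackFrame} goes in the opposite direction from what you need (it starts from a principal connection $\omega$ on $A$, builds $\nabla$ on the associated bundle, builds $\omega_E$ from $\nabla$, and pulls back), so it does not by itself establish that the Levi-Civita connection is recovered as the associated connection of $f^*\omega_E$ on $\mathfrak{O}$. What you actually need is a short direct computation: take a local section $s=(s_0,\dots,s_3)$ of $\mathfrak{O}$, apply corollary \ref{computationLinearGeneral} with $\omega^{\mathfrak{O}}=f^*\omega_E$ and the representation $\mu$, and use the explicit formula of proposition \ref{linearToPrincipal} to identify $\dd_1\mu\,\omega^{\mathfrak{O}}_{s(x_0)}(\dd_{x_0}s(X))$ with $\underset{s(x_0)}{\text{Mat}}(\nabla_X s_0,\dots,\nabla_X s_3)$; this reproduces the Leibniz rule for $\nabla_{LC}$ in the frame $s$. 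This is not difficult and is in the spirit of what you describe, but the justification by citing \ref{pullBackFrame} is not quite right. Step (ii), which you already flag as bookkeeping, is correct as outlined.
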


\begin{proof}
First note that $\nabla \otimes \nabla$ is the same connection as the one obtained by proposition \ref{principalToLinear} applied to the bundle $\mathcal{S}\otimes \overline{\mathcal{S}}$ seen as associated to $\mathfrak{S}$ (see proposition \ref{computationTensorProduct}).
Let $x_0 \in \mathcal{M}$. Using proposition \ref{normalBON}, we define a local smooth section $(e_0,e_1,e_2,e_3)$ of $\mathfrak{O}$ around $x_0$ such that $(\nabla e_0 (x_0), ..., \nabla e_3 (x_0)) = 0$. In particular, by definition of the connection on $\mathfrak{O}$, $\omega \circ \dd_{x_0} (e_0, ..., e_3) = 0$. Let $s$ be a local smooth section of $\mathfrak{S}$ around $x_0$ such that $p \circ s = (e_0,..., e_3)$. By definition of the connection on $\mathfrak{S}$, we have $\omega \circ \dd_{x_0} s = 0$. Moreover, the trivialization induced by $s$ on $\mathfrak{S}$ and the one induced by $(e_0,...,e_3)$ on $\mathfrak{O}$ are compatible. Then $j$ written in these local trivializations is just the map $id_U\times i$ where $i_0:\C^2\otimes \overline{\C}^2 \rightarrow \C^4$ is an isomorphism (defined earlier).
We take $Z_1$ a local smooth section of $\mathcal{S}\otimes \overline{\mathcal{S}}$ on a small neighborhood $U$ of $x_0$, then $Z_2 = j(Z_1)$ is a local smooth section of $T_{\C}\mathcal{M}$ around $x_0$. We have $Z_1 = [(s,\tilde{Z}_1)]$ with $\tilde{Z}_1: U \rightarrow \C^2\otimes \overline{\C}^2$ and $Z_2 = [((e_0,...,e_3),\tilde{Z_2})]$ with $\tilde{Z}_2 := i\circ \tilde{Z}_1$. Let $X\in T_{x_0}\mathcal{M}$. By the first remark in this proof and the definition of the connection on an associated bundle (proposition \ref{principalToLinear}), we have that $(\nabla\otimes \nabla)_X Z_1 = [(s,\dd_{x_0} \tilde{Z}_1(X))]$ (it uses the fact that $\omega \circ \dd_{x_0} s = 0$ and lemma \ref{computationLinear}). Moreover, similarly we have $\nabla_X Z_2 = [((e_0,...,e_3),\dd_{x_0} \tilde{Z}_2(X))] = [((e_0,...,e_3),i_0\dd_{x_0}\tilde{Z_1}(X))]$ (it uses the fact that $\omega \circ \dd_{x_0} (e_0, ..., e_3) = 0$, lemma \ref{computationLinear} and the linearity of $i_0$). Then we conclude that $\nabla_X j\circ Z_1 = j (\nabla\otimes \nabla)_X Z_1$. Since it is true for all local smooth sections $Z_1$, all $X\in T_{x_0}\mathcal{M}$ and all $x_0\in \mathcal{M}$, we have proved the proposition.
\end{proof}

\begin{remark}
In this remark, we use the identification $j$ implicitly.
Note that $\nabla$ is not the only connection with the property $\nabla\otimes\nabla = \nabla_{LC}$ (at least locally). Indeed, let $U$ be an open set on which we have a smooth spin frame $(o,\iota)$ with $\epsilon(o,\iota) = 1$. Then we can define a connection $\nabla'$ on $\mathcal{S}_{|_U}$ such that for all real vector field $X$ on $U$:
\begin{align*}
\nabla'_X o &= (a(X)+i\mu(X)) o + b(X)\iota\\
\nabla'_X \iota &= c(X) o +(- a(X) + i\mu(X)) \iota 
\end{align*}
where $a$, $b$ and $c$ are complex valued linear forms on $U$ and $\mu$ is a real valued linear form on $U$.
Moreover, we define $a$,$b$ and $c$ as follows:
\begin{equation}
\label{conditionsLC}
\begin{aligned}
b(X) &= -g((\nabla_{LC})_X(o\otimes \overline{o}), o\otimes\overline{\iota}) \\
c(X) &= -g((\nabla_{LC})_X(\iota\otimes\overline{\iota}), \iota\otimes \overline{o})\\
a(X) &= \frac{1}{2}\left(g((\nabla_{LC})_X(o\otimes \overline{o}), \iota\otimes\overline{\iota})-g((\nabla_{LC})_X (o\otimes\overline{\iota}), \iota\otimes\overline{o})\right)
\end{aligned}
\end{equation}
Conditions \eqref{conditionsLC} are necessary to have $\nabla\otimes\nabla = \nabla_{LC}$ and we now prove that they are sufficient (in the definition of $\nabla'$ we can chose freely any real linear form $\mu$ hence the lack of uniqueness). 
Note that $o \otimes \overline{o}$, $\iota\otimes\overline{\iota}$, $o\otimes \overline{\iota}$, $\iota\otimes \overline{o}$ is a normalized null tetrad on $U$. Therefore we have (using properties of the Levi-Civita connection):
\begin{align*}
(\nabla_{LC})_X o\otimes\overline{o} &= 2\Re(a(X)) o\otimes\overline{o}+b(X) \iota\otimes\overline{o} +\overline{b(X)} o\otimes\overline{\iota}\\
(\nabla_{LC})_X \iota\otimes\overline{\iota} &= -2\Re(a(X))\iota\otimes\overline{\iota} + c(X)o\otimes\overline{\iota} + \overline{c(X)}\iota\overline{o}\\
(\nabla_{LC})_X o\otimes \overline{\iota} &= \overline{c(X)}o\otimes\overline{o} + b(X)\iota\otimes\overline{\iota} + 2i\Im(a(X))o\otimes\overline{\iota}\\
(\nabla_{LC})_X \iota\otimes\overline{o} &= c(X)o\otimes\overline{o} + \overline{b(X)}\iota\otimes\overline{\iota} - 2i\Im(a(X))\iota\otimes\overline{o}
\end{align*}
To check that $\nabla'\otimes\nabla' = \nabla_{LC}$, it is enough to prove that check the equality on this tetrad. It follows from the definition of $\nabla'$.
\end{remark}

The previous remark shows that $\nabla$ is not completely determined by proposition \ref{idLCspin}. However, we also have the following proposition:
\begin{prop}
\label{nablaEpsilonZero}
We have $\nabla \epsilon = 0$
\end{prop}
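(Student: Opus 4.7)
The plan is to deduce $\nabla\epsilon=0$ directly from Proposition~\ref{defEpsilon} by observing that $\epsilon$ is, by construction, a \emph{constant} section when read in any local trivialization coming from a section of $\mathfrak{S}$, and then applying Lemma~\ref{computationLinear} to a suitably chosen horizontal section.

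First, I would set the associated-bundle framework: $\mathcal{S}'\wedge\mathcal{S}'$ is the vector bundle associated to the principal bundle $\mathfrak{S}$ via the representation $\rho_\wedge$ of $SL(2,\C)$ on $(\C^2)'\wedge(\C^2)'$ induced by the dual of the canonical representation. The key algebraic observation is that the determinant form $\det\in(\C^2)'\wedge(\C^2)'$ is $\rho_\wedge$-invariant: for any $g\in SL(2,\C)$, $\rho_\wedge(g)\det=\det(g^{-1})^{2}\det=\det$ since $\det g=1$. Consequently, Proposition~\ref{defEpsilon} says precisely that, in the local trivialization $\Phi_{\mathcal{S}'\wedge\mathcal{S}'}$ associated to any trivialization $\Phi$ of $\mathfrak{S}$, $\epsilon$ is the constant section equal to $\det$.

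Next, I would fix an arbitrary $x_0\in\mathcal{M}$ and, using Remark~\ref{sectionPara}, choose a local smooth section $s$ of $\mathfrak{S}$ defined near $x_0$ such that $\omega_{s(x_0)}\circ d_{x_0}s=0$ (i.e.\ $s$ is ``horizontal at $x_0$''). In the local trivialization of $\mathcal{S}'\wedge\mathcal{S}'$ induced by $s$, we may write $\epsilon=[(s,v)]$ with $v\colon U\to(\C^2)'\wedge(\C^2)'$ the constant map equal to $\det$, so $d_{x_0}v=0$. Applying Lemma~\ref{computationLinear} (with $E=\mathfrak{S}$, $V=(\C^2)'\wedge(\C^2)'$, $\rho=\rho_\wedge$) gives, for every $X\in T_{x_0}\mathcal{M}$,
\[
(\nabla_X\epsilon)(x_0)=[(s(x_0),\,d_{x_0}v(X))]=[(s(x_0),0)]=0.
\]
Since $x_0$ and $X$ are arbitrary, $\nabla\epsilon=0$.

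There is no genuine obstacle here; the only subtlety worth flagging is that Lemma~\ref{computationLinear} requires the horizontality condition $\omega\circ d_{x_0}s=0$, which is not automatic for an arbitrary local section. This is why invoking Remark~\ref{sectionPara} to produce such an $s$ is the pivotal step: without it, one would have to use Corollary~\ref{computationLinearGeneral} and separately verify that the extra connection term $d_{1}\rho_\wedge(\omega_{s(x_0)}(d_{x_0}s(X)))\det$ vanishes — which, of course, it does, because $\omega$ has values in $\mathfrak{sl}(2,\C)$ and the differentiated representation $d_{1}\rho_\wedge$ annihilates $\det$ on $\mathfrak{sl}(2,\C)$ (elements of trace zero act trivially on the top exterior power). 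Either route yields the same conclusion.
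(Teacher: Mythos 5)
Your argument is correct, but it takes a slightly different — and in a sense more structural — route than the paper. The paper proves $\nabla\epsilon=0$ by verifying the Leibniz identity $X(\epsilon(a,b))(x_0)=\epsilon(\nabla_Xa,b)(x_0)+\epsilon(a,\nabla_Xb)(x_0)$ directly: it picks a section $s$ of $\mathfrak{S}$ horizontal at $x_0$ (remark \ref{sectionPara}), writes $a=[(s,\tilde a)]$, $b=[(s,\tilde b)]$, applies lemma \ref{computationLinear} to compute $\nabla_Xa$ and $\nabla_Xb$, and then uses the bilinearity of $\det$. You instead observe that, because $\det$ is $SL(2,\C)$-invariant, the equivariant function on $\mathfrak{S}$ representing $\epsilon$ is the \emph{constant} function $\det$, so lemma \ref{computationLinear} (applied to the horizontal $s$) immediately gives $\nabla_X\epsilon=0$ without any Leibniz manipulation. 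That exhibits more clearly \emph{why} $\epsilon$ is parallel: its well-definedness (proposition \ref{defEpsilon}) and its parallelism both come from the same $SL(2,\C)$-invariance of $\det$. The only tacit step in your version is the identification of the connection on $\mathcal{S}'\wedge\mathcal{S}'$ obtained from $\omega$ via proposition \ref{principalToLinear} with the one defined by the Leibniz rule on the dual bundle; the paper proves the analogous compatibility only for tensor products (proposition \ref{computationTensorProduct}), so you are using a standard but not-explicitly-stated fact — the paper's Leibniz route sidesteps this. One harmless slip: for $g\in SL(2,\C)$ one has $\rho_\wedge(g)\det=\det(g^{-1})\,\det$, not $\det(g^{-1})^{2}\det$ (pulling back a top-degree form on $\C^2$ scales by a single determinant factor); the conclusion is unaffected since $\det g=1$.
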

\begin{proof}
By definition, $\nabla \epsilon = 0$ if for all $x_0\in \mathcal{M}$, $X\in T_{x_0}\mathcal{M}$ and $a$,$b$ spinor fields defined on a neighborhood $U$ of $x_0$, we have:
\begin{align*}
X(\epsilon(a,b))(x_0) = \epsilon(\nabla_X a, b)(x_0) + \epsilon(a,\nabla_X b)(x_0) .\\
\end{align*}
We denote by $\omega_{\mathfrak{S}}$ the principal connection on $\mathfrak{S}$ previously defined (and used to define the spin connection $\nabla$)
Let $s$ be a smooth local section of $\mathfrak{S}$ such that $(\omega_{\mathfrak{S}})_{s(x_0)}\dd_{x_0} s = 0$ (which exists by remark \ref{sectionPara}). Let $\tilde{a}:U\rightarrow \C^2$ and $\tilde{b}:U\rightarrow \C^2$ smooth be such that $a(x)=[(s(x),\tilde{a}(x))]$ and $b(x) = [(s(x), \tilde{b}(x))]$. By lemma \ref{computationLinear}, we have 
\begin{align*}
\nabla_X a(x_0) &= [(s(x_0), \dd_{x_0}\tilde{a}(X))]\\
\nabla_X b(x_0) &= [(s(x_0), \dd_{x_0}\tilde{b}(X))]
\end{align*}
Moreover, by definition of $\epsilon$ (see proposition \ref{defEpsilon}), we have $\epsilon(a,b)(x) = \det(\tilde{a}(x), \tilde{b}(x))$. Therefore by bilinearity of the determinant: 
\begin{align*}
X(\epsilon(a,b))(x_0) &= \det(\dd_{x_0}\tilde{a}(X),\tilde{b}(x_0)) + \det(\tilde{a}(x_0),\dd_{x_0}\tilde{b}(X))\\
 &= \epsilon(\nabla_X a, b)(x_0) + \epsilon(a,\nabla_X b)(x_0).
\end{align*}
\end{proof}

\begin{remark}
\label{computationNabla}
It is very useful to note that the connection $\nabla$ on $\mathcal{S}$ is completely determined by proposition \ref{idLCspin} and proposition \ref{nablaEpsilonZero} (in particular, we could have used these properties as a definition of the connection $\nabla$ on $\mathcal{S}$). Indeed, if we fix a local basis $(o,\iota)$ of $\mathcal{S}$ such that $\epsilon(o, \iota) = 1$ and $X$ a vector field on $\mathcal{M}$, we can write:
\begin{align*}
\nabla_X o &= a(X)o + b(X)\iota \\
\nabla_X \iota &= c(X)o + d(X)\iota
\end{align*}
Using proposition \ref{nablaEpsilonZero} and $\epsilon(o,\iota) = 1$, we have $a(X) = -d(X)$. By proposition \ref{idLCspin}, we also have:
\begin{align*}
b(X) &= -g(\nabla_X (o\otimes \overline{o}), o\otimes \overline{\iota})\\
c(X) &= -g(\nabla_X (\iota\otimes \overline{\iota}), \iota \otimes \overline{o})\\
a(X) &= \frac{1}{2}\left(g(\nabla_X (o\otimes \overline{o}), \iota\otimes \overline{\iota})-g(\nabla_X (o\otimes \overline{\iota}), \iota \otimes \overline{o})\right) 
\end{align*}
The coefficients of the one forms $a$, $-b$ and $c$ on the null basis $(l,n,m,\overline{m}) := (o\otimes\overline{o}, \iota\otimes\overline{\iota}, o\otimes \overline{\iota}, \iota\otimes\overline{o})$ are called spin coefficients. For example, following the notation in \cite{chandrasekhar1998mathematical} chapter 1 (286) for the spin coefficients:
\[\begin{array}{cccc}
\kappa = -b(l) & \tau = -b(n) & \sigma = -b(m)& \rho = -b(\overline{m}) \\
\pi = c(l) & \nu = c(n) & \mu = c(m) & \lambda = c(\overline{m})\\
\epsilon = a(l) & \gamma = a(n) & \beta = a(m) & \alpha = a(\overline{m})
\end{array}\]
\end{remark}

\begin{remark}
\label{vanishingSpinCoeff}
If we require that $(o,\iota)$ is a local section of $\mathcal{A}_0$ (and as usual $(l,n,m) := d(o, \iota)$), we have that $l$ and $n$ are pregeodesic and therefore, by remark \ref{computationNabla}, we have:
\begin{align*}
b(l) &= -g(\nabla_{l}l, m)=0\\
c(n) &= -g(\nabla_{n}n, \overline{m}) = 0
\end{align*}
Moreover, since $l$ and $n$ are shear-free and $\Re(m), -\Im(m)$ is an orthonormal family of $l^\perp$ and $n^\perp$, we have
\begin{align*}
0 =& \frac{1}{2}\left(g(\nabla_{\Im(m)}l, \Im(m))- g(\nabla_{\Re(m)}l, \Re(m))\right) - \frac{i}{2}\left(g(\nabla_{\Re(m)} l, \Im(m))+ g(\nabla_{\Im(m)}l, \Re(m))\right)\\
=& -\frac{1}{2} g(\nabla_m l, m)\\
\end{align*}
and similarly
\begin{align*}
0=& -\frac{1}{2} g(\nabla_{m} n, m) \\
=& -\frac{1}{2}g(\nabla_{\overline{m}} n, \overline{m}) &\text{ (complex conjugate)}
\end{align*}
Therefore we have:
\begin{align*}
b(m) =& 0\\
c(\overline{m}) =& 0
\end{align*}
In the spin-coefficient formalism, this corresponds to the vanishing of $\kappa$, $\lambda$, $\sigma$ and $\nu$.
\end{remark}
\begin{remark}
\label{computationOmega}
We can also compute concretely the connection $\omega$ on $\mathfrak{S}$ using proposition \ref{pullBackFrame}: Indeed, we have a natural embedding of $\mathfrak{S}$ in the set of frames $(o,\iota)$ of $\mathcal{S}$ given by the map $f$ of proposition \ref{pullBackFrame} (note that the representation of $SL(2,\C)$ into $GL(\C^2)$ is just the natural embedding so $\dd_1 \rho$ is just the inclusion of $sl(2,\C)$ into $M_2(\C)$). The map $f$ identifies $\mathfrak{S}$ with the set of frames $(o,\iota)$ such that $\epsilon(o, \iota)=1$. Therefore, proposition \ref{pullBackFrame} tells us that for a local smooth section $s$ of $\mathfrak{S}$ identified with the local frame $(o,\iota)$ and $X\in T_{x_0}\mathcal{M}$, $\omega( \dd_{x_0} s(X)) = \underset{(o,\iota)(x_0)}{\text{Mat}}(\nabla_X o, \nabla_X \iota) = \begin{pmatrix}a(X) & c(X) \\ b(X) & -a(X)\end{pmatrix}\in sl(2,\C)$.
\end{remark}

\subsubsection{GHP connection}
We now define the connection on $\mathcal{A}_0$ and $\mathcal{N}_0$ using proposition \ref{pullBack2}. We cannot use proposition \ref{pullBack} since the condition of remark \ref{conditionPullBack} is clearly not satisfied.
The covering map $\tilde{p}: SL(2,\C) \rightarrow SO^+(1,3)$ gives a Lie algebra isomorphism $\dd_{Id} \tilde{p}: sl(2,\C) \rightarrow so(1,3)$. 

We also have an embedding of principal bundles
\[f:\begin{cases}
\mathcal{N}_0 \rightarrow \mathfrak{O}\\
(l,n,m) \mapsto \left(\frac{l+n}{\sqrt{2}}, \sqrt{2}\Re(m), -\sqrt{2}\Im(m), \frac{l-n}{\sqrt{2}}\right)
\end{cases}
\]
associated\footnote{Meaning that for all $x\in \mathcal{N}_0$ and $g\in \C^*$ we have $f(x\cdot g) = f(x)\cdot\tilde{f}(g)$} to the embedding of Lie groups
\[
\tilde{f}: 
\begin{cases}
\C^* \rightarrow SO^+(1,3)\\
|z|e^{i\theta} \mapsto \begin{pmatrix} \frac{1}{2}(|z|+|z|^{-1}) & 0 & 0 & \frac{1}{2}(|z|-|z|^{-1})  \\ 0 & \cos\theta & -\sin\theta & 0 \\ 0 & \sin\theta & \cos\theta & 0 \\
\frac{1}{2}(|z|-|z|^{-1}) & 0 & 0 & \frac{1}{2}(|z|+|z|^{-1}) \end{pmatrix}
\end{cases}
\]
We have chosen $f$ to be the unique map such that $f \circ d = p\circ \mathfrak{f}$ where $\mathfrak{f}$ is the natural embedding of $\mathcal{A}_0$ into $\mathfrak{S}$ given by $(o, \iota) = \left[\mathfrak{f}(o, \iota), \left(\begin{pmatrix}1 \\ 0 \end{pmatrix},\begin{pmatrix}0 \\ 1 \end{pmatrix}\right)\right]$. Note that $\mathfrak{f}( (o, \iota)\cdot z) = \mathfrak{f}(o, \iota) \cdot \begin{pmatrix} z & 0 \\ 0 & z^{-1}\end{pmatrix}$, therefore, $\mathfrak{f}$ is associated with the following embedding of Lie groups: \[\tilde{\mathfrak{f}}:\begin{cases} \C^*\rightarrow SL(2,\C)\\
z \rightarrow \begin{pmatrix}z & 0 \\ 0 & z^{-1} \end{pmatrix}\end{cases}.\] The fact that $f \circ d = p\circ \mathfrak{f}$ can be checked using a pair of compatible trivializations (and the associated trivializations for $\mathcal{S}$ and $T\mathcal{M}\otimes\C$) in which it follows from the equality (true for every $C_1,C_2\in \C^2$):
\begin{align*}
\tilde{p}\left(C_1, C_2\right)=&\left(i_0\left(\frac{C_1\otimes \overline{C_1} + C_2\otimes \overline{C_2}}{\sqrt{2}}\right),i_0\left(\frac{C_1\otimes \overline{C_2}+C_2\otimes\overline{C_1}}{\sqrt{2}}\right),\right.\\
& \left.-i_0\left(\frac{C_1\otimes\overline{C_2}-C_2\otimes\overline{C_1}}{\sqrt{2}i}\right), i_0\left(\frac{C_1\otimes\overline{C_1}-C_2\otimes\overline{C_2}}{\sqrt{2}}\right)\right).\end{align*} Note that $f$ is unique since $d$ is surjective.

We define $H:= \tilde{f}(\C^*)$ which is a commutative embedded Lie subgroup of $SO^+(1,3)$ and $\mathfrak{h}$ its Lie algebra (it is an abelian Lie subalgebra of $so(1,3)$). In order to apply remark \ref{remarkKillingForm} (and proposition \ref{pullBack2}), we want to prove that the Killing form of $so(1,3)$ is non degenerate and that its restriction to $\mathfrak{h}$ is also non degenerate.

It is easier to check this using the isomorphism of real Lie algebras $\dd_{Id} \tilde{p}$ since we have $(\dd_{Id} \tilde{p})^{-1}(\mathfrak{h}) = \left\{ \begin{pmatrix} z & 0\\ 0 & -z \end{pmatrix}, z\in \C\right\}$ (this follows from the equality $\tilde{f}(z^2) = \tilde{p}\begin{pmatrix}z & 0\\ 0 & z^{-1} \end{pmatrix}$). Note that we consider $sl(2,\C)$ as a real Lie algebra here, we will denote it by $sl(2,\C)_\R$ to emphasize this fact. The Killing form of $sl(2,\C)_\R$ is $B(M,N) = 8\Re(\tr(MN))$. Indeed, if $A_\C$ is the matrix of $ad_M\circ ad_N$ in the $\C$-basis $E_1 = \begin{pmatrix}1 & 0\\ 0 & -1\end{pmatrix}, E_2 = \begin{pmatrix}0 & 1 \\0 & 0\end{pmatrix}, E_3 = \begin{pmatrix}0 & 0 \\1 & 0\end{pmatrix}$, the matrix of $ad_M\circ ad_N$ in the $\R$ basis $(E_1, E_2, E_3, iE_1, iE_2, iE_3)$ is $A_\R:=\begin{pmatrix}\Re(A_{\C}) & -\Im(A_{\C})\\ \Im(A_{\C}) & \Re(A_{\C}) \end{pmatrix}$. Therefore $\tr(A_\R) = 2\Re(\tr(A_\C))$. We use this fact to deduce the expression of the Killing form of $sl(2,\C)_\R$ from the classical expression of the Killing form of $sl(2,\C)_{\C}$ (complex Lie algebra). Let $M\in sl(2,\C)_\R$, we assume that for all $N\in sl(2,\C)_R$, $B(M,N) = 0$. In particular, $B(M,\overline{M}^T) = 0$ and $B(M, -i\overline{M}^T)= 0$. Therefore $\tr(M\overline{M}^T) = 0$ and $M = 0$. We conclude that $B$ is non degenerate. Now let $M = \begin{pmatrix} z & 0\\ 0 & -z \end{pmatrix}$ (with $z\in \C$) be such that for all $N$ of the form $\begin{pmatrix} z' & 0 \\ 0 & -z'\end{pmatrix}$ (with $z'\in \C$), $B(M, N) = 0$. Again, we have $B\left(M,\overline{M}^T\right) = 0$ and $B\left(M, -i\overline{M}^T\right)= 0$ and therefore $M = 0$. We conclude that the restriction of $B$ to $(\dd_{Id} \tilde{p})^{-1}(\mathfrak{h})$ is non degenerate. A direct computation shows that the orthogonal of $(\dd_{Id} \tilde{p})^{-1}(\mathfrak{h})$ is $\left\{ \begin{pmatrix} 0 & a \\ b & 0\end{pmatrix}, a,b\in \C\right\}$. Then by remark \ref{remarkKillingForm} we can use proposition \ref{pullBack2} to define a principal connection $\omega$ on $\mathcal{N}_0$.
Finally we can use proposition \ref{pullBackCovering} to define a principal connection $\omega$ on $\mathcal{A}_0$, using the covering map $d$. Note that an equivalent definition is given by applying proposition \ref{pullBack2} for the natural embedding $\mathfrak{f}:\mathcal{A}_0 \rightarrow \mathfrak{S}$ and the orthogonal projection (with respect to the Killing form) on $\left\{ \begin{pmatrix} z & 0\\ 0 & -z \end{pmatrix}, z\in \C\right\}$ (the equivalence between the two definitions follows from the equality $f\circ d = p\circ \mathfrak{f}$).

\begin{remark}
\label{computationOmegaA}
Using the second definition of the connection on $\mathcal{A}_0$ and remark \ref{computationOmega}, we have that for $(o,\iota)$ a smooth section of $\mathcal{A}_0$ around $x_0\in \mathcal{M}$ and $X\in T_{x_0}\mathcal{M}$, 
\begin{align*}
\omega_{(o(x_0),\iota(x_0))}(\dd_{x_0}(o,\iota)(X)) &= \pi^{\perp}_{\mathfrak{h}}\begin{pmatrix}a(X) & c(X)\\b(X) & -a(X) \end{pmatrix} \\
&= \begin{pmatrix} a(X) & 0 \\ 0 & -a(X)\end{pmatrix}
\end{align*} where $a$,$b$ and $c$ are the same as in remark \ref{computationNabla} and can be computed using only $(l,n,m,\overline{m}):=d(o,\iota)$ and the Levi Civita connection. We have denoted the Killing-orthogonal projection on $(\dd_{Id}\tilde{p})^{-1}(\mathfrak{h})$ by $\pi^{\perp}_{\mathfrak{h}}$. Also note that in this equality, the Lie algebra of $\C^*$ has been identified with the Lie algebra of $\tilde{\mathfrak{f}}(C^*)$ (see proposition \ref{pullBack2}). Without the identification, we simply get $\omega_{(o(x_0),\iota(x_0))}(\dd_{x_0}(o,\iota)(X)) = a(X)$.
\end{remark}

We can use this principal connection to define a connection $\nabla$ on $\mathcal{B}(s,w)$ by proposition \ref{principalToLinear}. Now we give a way to compute concretely $\nabla u$ for $u$ a local smooth section of $\mathcal{B}(s,w)$.
\begin{prop}
\label{computationGHPConnection}
Let $(o, \iota)$ be a local smooth section of $\mathcal{A}_0$. Let $u$ be a local smooth section of $\mathcal{B}(s,w)$ on an open set $U$ of $\mathcal{M}$ and let $u_1: U\rightarrow \C$ be such that $u = [(o,\iota), u_1]$. Then for $x_0\in U$ and $X\in \C\otimes_\R T_{x_0}\mathcal{M}$
\[
\nabla_X u = [(o,\iota)(x_0), -\left((w+s)a(X) + (w-s)\overline{a(\overline{X})}\right)u_1(x_0) + X(u_1)]
\]
where $a(X) = \epsilon(\nabla_X o, \iota) = \epsilon(\nabla_X \iota, o)$ as previously.
\end{prop}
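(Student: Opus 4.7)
The plan is to apply Corollary \ref{computationLinearGeneral} (the formula for a linear connection coming from a principal connection, written in terms of an arbitrary local section) to the principal bundle $\mathcal{A}_0$ equipped with the principal connection $\omega$ defined in the previous subsection and the representation $\rho_{s,w}$. The value of $\omega$ on the chosen section $(o,\iota)$ is already made explicit by Remark \ref{computationOmegaA}, so all that remains is to compute $\dd_1\rho_{s,w}$ and to handle the $\C$-linear extension of the resulting formula.

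First I would prove the formula for a real tangent vector $X\in T_{x_0}\mathcal{M}$. Corollary \ref{computationLinearGeneral} applied with $E = \mathcal{A}_0$, the local section $(o,\iota)$, and the representation $\rho_{s,w}$ yields
\[
\nabla_X u = \bigl[(o,\iota)(x_0),\, X(u_1) + \dd_1\rho_{s,w}\bigl(\omega_{(o,\iota)(x_0)}(\dd_{x_0}(o,\iota)(X))\bigr)\, u_1(x_0)\bigr].
\]
By Remark \ref{computationOmegaA}, with the implicit identification of $\mathrm{Lie}(\C^*)$ with $\C$, the argument of $\dd_1\rho_{s,w}$ is exactly $a(X)\in\C$.

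Next I would differentiate $\rho_{s,w}(z) = z^{-w-s}\overline{z}^{-w+s}$ at $z=1$. Expanding $\rho_{s,w}(1+th)$ to first order in $t\in\R$ for $h\in\C$ gives
\[
\dd_1\rho_{s,w}(h) = -(w+s)h - (w-s)\overline{h}.
\]
Substituting $h = a(X)$ produces the coefficient $-\bigl((w+s)a(X) + (w-s)\overline{a(X)}\bigr)$ multiplying $u_1(x_0)$. For real $X$ one has $\overline{X}=X$, hence $\overline{a(X)} = \overline{a(\overline{X})}$, and the stated formula is established.

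Finally, to pass to complex $X \in \C\otimes_\R T_{x_0}\mathcal{M}$ I would invoke the paper's convention $\nabla_X = \nabla_{\Re X} + i\nabla_{\Im X}$. Writing $X = X_1 + iX_2$ with $X_j$ real and applying the real formula to each $X_j$, both sides of the stated equality are $\C$-linear in $X$: the term $X(u_1)$ obviously is, $a(X)$ denotes the $\C$-linear extension of the real one-form $a$, and a direct computation, $\overline{a(\overline{X_1+iX_2})} = \overline{a(X_1) - ia(X_2)} = \overline{a(X_1)} + i\overline{a(X_2)}$, shows that $X\mapsto \overline{a(\overline{X})}$ is precisely the $\C$-linear extension of the real one-form $X_1 \mapsto \overline{a(X_1)}$. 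The identity therefore propagates from real to complex $X$ by $\C$-linearity. The only subtle point, and the reason the final formula features $\overline{a(\overline{X})}$ rather than $\overline{a(X)}$, is exactly this last bookkeeping: $X\mapsto \overline{a(X)}$ is only $\R$-linear, whereas $X \mapsto \overline{a(\overline{X})}$ agrees with it on real vectors and is the genuine $\C$-linear extension.
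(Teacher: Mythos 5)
Your proposal is correct and follows exactly the paper's route: reduce to real $X$, apply Corollary~\ref{computationLinearGeneral} with the connection $\omega$ on $\mathcal{A}_0$, plug in Remark~\ref{computationOmegaA} to get $a(X)$, compute $\dd_1\rho_{s,w}(h) = -(w+s)h-(w-s)\overline{h}$, and extend by $\C$-linearity. You are in fact slightly more explicit than the paper on the final bookkeeping step, namely verifying that $X\mapsto\overline{a(\overline{X})}$ is the $\C$-linear extension of the real form $X\mapsto\overline{a(X)}$, which the paper compresses into the sentence "By $\C$-linearity of both sides."
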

\begin{proof}
By $\C$ linearity of both sides, it is enough to prove the result for real vectors.
Let $x_0 \in U$ and $X\in T_{x_0}\mathcal{M}$ (real vector space). By corollary \ref{computationLinearGeneral} and remark \ref{computationOmegaA}, we have:
\begin{align*}
\nabla_X u &= [(o(x_0), \iota(x_0)), X(u_1)+\dd_{1}(\rho_{w,s})\omega(\dd_{x_0}(o,\iota)(X)) u_1(x_0)]\\
&= [(o(x_0), \iota(x_0)), X(u_1)+\dd_{1}(\rho_{w,s})(a(X))u_1(x_0)]\\
&= [(o(x_0), \iota(x_0)), X(u_1)-((s+w)a(X)+(w-s)\overline{a(\overline{X})})u_1(x_0)]
\end{align*}
\end{proof}

\begin{remark}
It is reassuring to check that for $u \in \mathcal{B}(s,w)$ and $v \in \mathcal{B}(s',w')$, if we fix $(o, \iota)$ a local smooth section of $\mathcal{A}_0$ near $x_0$ and write $u = [(o,\iota), u_1]$, $v = [(o,\iota), v_1]$, $u\otimes v = [(o,\iota), u_1v_1]$, and $\nabla_X u\otimes v = [(o, \iota), w_1]$ we have:
\begin{align*}
w_1=&\left(-(w+w'+s+s')a(X) - (w+w'-(s+s)')\overline{a(\overline{X})}\right)u_1(x_0)v_1(x_0) + X(u_1v_1) \\
=& \left(\left(-(w+s)a(X) - (w-s)\overline{a(\overline{X})}\right)u_1(x_0) + X(u_1)\right)v_1(x_0)\\
& + \left(\left(-(w'+s')a(X) - (w'-s')\overline{a(\overline{X})}\right)v_1(x_0) + X(v_1)\right) u_1(x_0)\\
\end{align*} 
and therefore $\nabla_X u\otimes v = (\nabla_X u) \otimes v + u\otimes (\nabla_X v)$. There is a compatibility between the connection acting on different spin weighted bundles and the isomorphism $\mathcal{B}(s,w)\otimes\mathcal{B}(s',w')=\mathcal{B}(s+s',w+w')$.
\end{remark}

\subsubsection{GHP operators}
If $u$ is a smooth section of $\mathcal{B}(s,w)$, then $\nabla u$ is a smooth section of $T^*_{\C}\mathcal{M}\otimes\mathcal{B}(s,w)$. We can define the operators which map $\nabla u$ to its spin weighted components (defined in remark \ref{swCompSwValuedCospinor}). Equivalently these operators can be seen as contraction of the spin weighted 2-cospinor $\nabla u$ with the spin weighted spinors $o$,$\iota$, $\overline{o}$ and $\overline{\iota}$. These operators are called GHP operators.
In this subsection, we denote by $\boldsymbol{o}$ the first projection of $\mathcal{A}_0$ and $\boldsymbol{\iota}$ the second projection. We also use the notation $\boldsymbol{l} = \boldsymbol{o}\otimes\overline{\boldsymbol{o}}$, $\boldsymbol{n}=\boldsymbol{\iota}\otimes\overline{\boldsymbol{\iota}}$ and $\boldsymbol{m}=\boldsymbol{o}\otimes\overline{\boldsymbol{\iota}}$.
We define the operators
\[
\begin{array}{cc}
\text{\th}:\begin{cases}
\Gamma(\mathcal{B}(s,w))\rightarrow \Gamma(\mathcal{B}(s,w+1))\\
u \mapsto \nabla_{\boldsymbol{l}}u
\end{cases}
&
\text{\th}':\begin{cases}
\Gamma(\mathcal{B}(s,w))\rightarrow \Gamma(\mathcal{B}(s,w-1))\\
u \mapsto \nabla_{\boldsymbol{n}}u
\end{cases}\\

\text{\dh}:\begin{cases}
\Gamma(\mathcal{B}(s,w))\rightarrow \Gamma(\mathcal{B}(s+1,w))\\
u \mapsto \nabla_{\boldsymbol{m}}u
\end{cases}
&
\text{\dh}':\begin{cases}
\Gamma(\mathcal{B}(s,w))\rightarrow \Gamma(\mathcal{B}(s-1,w))\\
u \mapsto \nabla_{\overline{\boldsymbol{m}}}u
\end{cases}
\end{array}
\]
\begin{remark}
\label{expressionGHP}
We can use proposition \ref{computationGHPConnection} to compute \text{\th}, \text{\th$'$}, \text{\dh} and \text{\dh$'$} in a local trivialization given by a smooth local section $(o, \iota)$ of $\mathcal{A}_0$. Indeed, if $u\in \mathcal{B}(s,w)$ writes $u = [(o,\iota), u_1]$ we have:
\begin{align*}
(\text{\th} u)_1 &= \left(-(w+s)a(l) - (w-s)\overline{a(l)}\right)u_1 + l(u_1)\\
(\text{\th'}u)_1 &= \left(-(w+s)a(n) - (w-s)\overline{a(n)}\right)u_1 + n(u_1)\\
(\text{\dh}u)_1 &= \left(-(w+s)a(m) - (w-s)\overline{a(\overline{m})}\right)u_1 + m(u_1)\\
(\text{\dh'}u)_1 &= \left(-(w+s)a(\overline{m}) - (w-s)\overline{a(m)}\right)u_1 + \overline{m}u_1 \quad.
\end{align*}
\end{remark}

\begin{prop}
There exists a unique element $\boldsymbol{b(\overline{m})}$ of $\mathcal{B}(0,1)$ such that for every local smooth section $o,\iota$ of $\mathcal{A}_0$ defined on an open set $U$, $\boldsymbol{b(\overline{m})}_{|_U} = [(o,\iota), b(\overline{m})]$ (where $b(\overline{m}) := -\epsilon(\nabla_{\iota\otimes \overline{o}}o, o)$).
\end{prop}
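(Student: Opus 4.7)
The plan is to build $\boldsymbol{b(\overline{m})}$ by gluing the local expression from the formula, exploiting the fact that local sections of $\mathcal{A}_0$ cover $\mathcal{M}$. First I would pick an open cover $\{U_\alpha\}$ of $\mathcal{M}$ on which smooth local sections $(o_\alpha,\iota_\alpha)$ of $\mathcal{A}_0$ exist (possible because $\mathcal{A}_0$ is locally trivial as a principal bundle) and set
\[
\boldsymbol{b(\overline{m})}\big|_{U_\alpha} := \bigl[(o_\alpha,\iota_\alpha),\; -\epsilon(\nabla_{\iota_\alpha\otimes\overline{o_\alpha}}\,o_\alpha,\, o_\alpha)\bigr].
\]
Each local representative is smooth since $\nabla$, $\epsilon$ and $(o_\alpha,\iota_\alpha)$ are smooth, so the entire content of the statement is to check that the definitions agree on overlaps. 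Uniqueness would then follow immediately since the formula prescribes the representative in every local trivialization and such trivializations cover $\mathcal{M}$.

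On an overlap $U_\alpha\cap U_\beta$ there is a unique smooth $z:U_\alpha\cap U_\beta\to\C^*$ with $(o_\beta,\iota_\beta) = (o_\alpha,\iota_\alpha)\cdot z = (zo_\alpha, z^{-1}\iota_\alpha)$. According to the transformation law \eqref{spinWeight} with $(s,w)=(0,1)$, the two local representatives define the same section of $\mathcal{B}(0,1)$ if and only if the numerical values $b_\alpha$ and $b_\beta$ of $b(\overline{m})$ in the two frames satisfy $b_\beta = z\,\overline{z}\, b_\alpha = |z|^2 b_\alpha$. So the only thing left to verify is this identity from the defining formula.

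The verification, which I expect to be the main (and only non-routine) technical step, is a direct computation. Using $\iota_\beta\otimes\overline{o_\beta} = (\overline{z}/z)\,\iota_\alpha\otimes\overline{o_\alpha}$, the function-linearity of $\nabla$ in the first slot, the Leibniz rule
\[
\nabla_{\iota_\alpha\otimes\overline{o_\alpha}}(zo_\alpha) = (\iota_\alpha\otimes\overline{o_\alpha})(z)\,o_\alpha + z\,\nabla_{\iota_\alpha\otimes\overline{o_\alpha}}o_\alpha,
\]
and the $\C$-bilinearity of $\epsilon$, one finds
\[
\epsilon\bigl(\nabla_{\iota_\beta\otimes\overline{o_\beta}}o_\beta,\, o_\beta\bigr) = \overline{z}\Bigl[(\iota_\alpha\otimes\overline{o_\alpha})(z)\,\epsilon(o_\alpha,o_\alpha) + z\,\epsilon(\nabla_{\iota_\alpha\otimes\overline{o_\alpha}}o_\alpha,\, o_\alpha)\Bigr].
\]
The key point will be that the "non-tensorial" term coming from the derivative of $z$ drops out thanks to the antisymmetry of $\epsilon$ (which gives $\epsilon(o_\alpha,o_\alpha)=0$), leaving exactly $|z|^2\epsilon(\nabla_{\iota_\alpha\otimes\overline{o_\alpha}}o_\alpha,o_\alpha)$, i.e.\ $b_\beta=|z|^2 b_\alpha$. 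Once this identity is established, the local definitions patch into a globally defined smooth section of $\mathcal{B}(0,1)$, and uniqueness is automatic.
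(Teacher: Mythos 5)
Your proposal is correct and follows essentially the same route as the paper: both reduce the statement to a well-definedness check under a change of local section $(o,\iota)\mapsto(zo,z^{-1}\iota)$, and both perform the same computation using function-linearity of $\nabla$ in the direction argument, the Leibniz rule, $\C$-bilinearity of $\epsilon$, and the antisymmetry $\epsilon(o,o)=0$ to kill the non-tensorial term, yielding the required transformation factor $|z|^2$ for weight $(0,1)$. The only cosmetic difference is that the paper states the result by directly computing $\rho_{(0,1)}(z^{-1})$ in the associated-bundle description, while you phrase it through the equivariance condition \eqref{spinWeight}; these are the same thing.
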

\begin{proof}
The only thing to prove is that the definition does not depend of the choice of the local section $(o,\iota)$. Let $(o,\iota)$ be a local smooth section of $\mathcal{A}_0$ on an open set $U$. Let $z:U\rightarrow \C^*$ be a smooth map and $(o', \iota') = (o, \iota)\cdot z = (zo, z^{-1}\iota)$ (every local smooth section on $U$ can be written in this form). We then have:
\begin{align*}
\epsilon(\nabla_{\iota'\otimes \overline{o}'}o', o') =& \epsilon(\nabla_{\overline{z}z^{-1}\iota\otimes \overline{o}}(zo), zo) \\
=& \epsilon(\overline{z}z^{-1}\overline{m}(z)o, zo) + \epsilon(\overline{z}\nabla_{\iota\otimes \overline{o}}o, zo) \\
=& z\overline{z}\epsilon(\nabla_{\iota\otimes \overline{o}}o, o)\\
=& \rho_{(0,1)}(z^{-1})\epsilon(\nabla_{\iota\otimes \overline{o}}o, o)
\end{align*}
Since $[(o,\iota), -\rho_{0,1}(z^{-1})\epsilon(\nabla_{\iota\otimes \overline{o}}o, o)] = [(o',\iota')\cdot z^{-1}, -\epsilon(\nabla_{\iota\otimes \overline{o}}o, o)]$ by definition of $\mathcal{B}(0,1)$, we finally have: 
\[ [(o',\iota'), -\epsilon(\nabla_{\iota'\otimes \overline{o}'}o', o')] = [(o,\iota), -\epsilon(\nabla_{\iota\otimes \overline{o}}o, o)]\]
\end{proof}
\begin{remark}
\label{multiplicationOperator}
Thanks to remark \ref{productSpinWeighted}, we can see $\boldsymbol{b(\overline{m})}$ as a multiplication operator from $\mathcal{B}(s,w)$ to $\mathcal{B}(s,w+1)$.
\end{remark}
\begin{remark}
\label{otherSWFunctions}
We similarly define the spin weighted functions $\boldsymbol{b(n)} \in \mathcal{B}(1,0)$, $\boldsymbol{c(m)}\in \mathcal{B}(0,-1)$ and $\boldsymbol{c(l)} \in \mathcal{B}(-1,0)$.
\end{remark}

\section{Definition of the Teukolsky operator}
\begin{definition}[Contraction operator]
Let $\phi \in (\mathcal{S}')^{\otimes n_0}$ ($n_0\in \N$). We define the operator $C_{i,j}$ (with $i<j$) such that in any $(s_0,s_1)$ basis of $\mathcal{S}$ with $\epsilon(s_0,s_1) = 1$:
\begin{align*}
(C_{i,j}\phi)(s_{k_1},..., \hat{s_{k_{i}}},..., \hat{s_{k_j}},...,s_{i_{n_0}}) =& \phi(s_{k_1},...,s_{k_{i-1}}, s_1, s_{k_{i+1}},..., s_{k_{j-1}}, s_0, s_{k_{j+1}}, ..., s_{k_{n_0}})\\
&-\phi(s_{k_1},...,s_{k_{i-1}}, s_0, s_{k_{i+1}},..., s_{k_{j-1}}, s_1, s_{k_{j+1}}, ..., s_{k_{n_0}})
\end{align*}
where $k_1, ... k_n \in \left\{0,1\right\}$ and $\hat{s_{k_{l}}}$ means that $s_{k_{l}}$ is skipped in the enumeration.
This definition does not depend on the chosen basis as long as it is normalized.
\end{definition}

\begin{definition}
Let $\phi \in \Gamma((\mathcal{S}')^{\odot n_0})$. We define the operator $D$ (Dirac operator) as $D\phi = C_{1,3}\nabla \phi \in \Gamma(\overline{\mathcal{S}}'\otimes(\mathcal{S}')^{\otimes n_0-1})$. Because the spinor is symmetric, we can replace $3$ in the definition by any index in $\left\{3,...,n_0+2\right\}$.
\end{definition}
\begin{remark}
Note that $D\phi$ is symmetric with respect to the last $n_0-1$ variables but the first one has a particular status.
\end{remark}

\begin{prop}
\label{SWDirac}
We have the following relations at the level of spin weighted components:
\begin{align}
(D\phi)(\overline{\boldsymbol{o}},\boldsymbol{o},...,\boldsymbol{o}) =& \left(\text{\dh'} + \boldsymbol{c(l)}\right)\phi(\boldsymbol{o},...,\boldsymbol{o})-\left(\text{\th}+n_0\boldsymbol{b(\overline{m})}\right) \phi(\boldsymbol{\iota}, \boldsymbol{o},...,\boldsymbol{o}) \label{SWDirac1}\\
(D\phi)(\overline{\boldsymbol{\iota}},\boldsymbol{o},...,\boldsymbol{o}) =& \left(\text{\th'}+\boldsymbol{c(m)}\right)\phi(\boldsymbol{o},...,\boldsymbol{o}) -\left(\text{\dh} + n_0\boldsymbol{b(n)}\right)\phi(\boldsymbol{\iota}, \boldsymbol{o},...,\boldsymbol{o}) \label{SWDirac2}
\end{align}
where $\boldsymbol{c(l)}$, $\boldsymbol{c(m)}$, $\boldsymbol{b(\overline{m})}$ and $\boldsymbol{b(n)}$ are seen as multiplications operators on spin-weighted functions (see remark \ref{multiplicationOperator} and \ref{otherSWFunctions})
\end{prop}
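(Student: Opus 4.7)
The plan is to prove both identities in a single local trivialization. Around any point of $\mathcal{M}$, pick a smooth local section $(o, \iota)$ of $\mathcal{A}_0$; this simultaneously trivializes $\mathcal{S}$ and every bundle $\mathcal{B}(s, w)$, identifies $\boldsymbol{l}, \boldsymbol{n}, \boldsymbol{m}, \overline{\boldsymbol{m}}$ with the local vector fields $l, n, m, \overline{m}$, and turns each spin weighted component of $\phi$ and of $D\phi$ into an ordinary $\C$-valued function. In this trivialization Remark \ref{expressionGHP} makes the GHP operators explicit, so it suffices to check each identity at the level of its scalar components.

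For the first identity, I unfold the contraction defining $D\phi = C_{1,3}\nabla \phi$ using the identification $T_\C\mathcal{M} \simeq \mathcal{S}\otimes\overline{\mathcal{S}}$ and the basis $(o, \iota)$ with $\epsilon(o, \iota) = 1$, which yields
\[
(D\phi)(\overline{o}, o, \dots, o) = (\nabla_{\overline{m}}\phi)(o, \dots, o) - (\nabla_l \phi)(\iota, o, \dots, o).
\]
Each covariant derivative of $\phi$ is then pushed outside by Leibniz: for any vector $X$,
\[
(\nabla_X\phi)(y_1, \dots, y_{n_0}) = X\bigl(\phi(y_1, \dots, y_{n_0})\bigr) - \sum_{k=1}^{n_0} \phi(y_1, \dots, \nabla_X y_k, \dots, y_{n_0}).
\]
I substitute $\nabla_X o = a(X) o + b(X)\iota$ and $\nabla_X \iota = c(X) o - a(X)\iota$ from Remark \ref{computationNabla} and then group the repeated $o$-slots using the symmetry of $\phi$. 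The key simplification comes from Remark \ref{vanishingSpinCoeff}: because $(o, \iota)$ is a section of $\mathcal{A}_0$, the principal null directions $l, n$ are pregeodesic and shear-free, which gives $b(l) = 0$ here (and $b(m) = 0$ later for the second identity); this is precisely what prevents a spurious $\phi(\iota, \iota, o, \dots, o)$ term from appearing in the Leibniz expansion of $(\nabla_l \phi)(\iota, o, \dots, o)$.

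For the right hand side, Proposition \ref{correspondence} gives $\phi(\boldsymbol{o}, \dots, \boldsymbol{o}) \in \mathcal{B}(n_0/2, n_0/2)$ and $\phi(\boldsymbol{\iota}, \boldsymbol{o}, \dots, \boldsymbol{o}) \in \mathcal{B}(n_0/2 - 1, n_0/2 - 1)$. Plugging these weights into the formulas of Remark \ref{expressionGHP} shows that $w - s$ vanishes in both cases (so the $\overline{a(\cdot)}$ terms drop out), while $w + s$ equals $n_0$ and $n_0 - 2$ respectively; these are exactly the prefactors produced by the Leibniz expansion of the left hand side. The remaining scalar terms $c(l)\phi(o, \dots, o)$ and $n_0 b(\overline{m}) \phi(\iota, o, \dots, o)$ are supplied by the multiplication operators $\boldsymbol{c(l)}$ and $n_0 \boldsymbol{b(\overline{m})}$, and a term-by-term comparison proves \eqref{SWDirac1}. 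The proof of \eqref{SWDirac2} is verbatim the same computation after the substitutions $\overline{m} \leftrightarrow m$, $l \leftrightarrow n$, $\boldsymbol{c(l)} \leftrightarrow \boldsymbol{c(m)}$, $\boldsymbol{b(\overline{m})} \leftrightarrow \boldsymbol{b(n)}$, with the vanishing $b(m) = 0$ playing the role of $b(l) = 0$. The only real obstacle is the bookkeeping of the numerical prefactors coming from the interaction of the Leibniz sum with the spin weight coefficients, but nothing is conceptually hard once the trivialization is adapted to the principal null directions and the vanishing spin coefficients are used.
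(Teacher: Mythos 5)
Your proof is correct and takes essentially the same route as the paper: unfold $D\phi$ in a local trivialization adapted to $(o,\iota)\in\mathcal{A}_0$, apply the Leibniz rule, substitute the spin--coefficient relations, invoke the vanishing $b(l)=0$ (resp.\ $b(m)=0$), and match the surviving terms against the GHP operators and multiplication operators using the weight computation $w-s=0$, $w+s=n_0$ or $n_0-2$. One small imprecision: the substitution rule you state for passing from \eqref{SWDirac1} to \eqref{SWDirac2}, namely $\overline{m}\leftrightarrow m$, $l\leftrightarrow n$, is not the one that actually works; since the change is $\overline{\boldsymbol{o}}\to\overline{\boldsymbol{\iota}}$ in the contracted $\overline{\mathcal{S}}'$ slot, the induced replacement is $\overline{m}=\iota\otimes\overline{o}\to n=\iota\otimes\overline{\iota}$ and $l=o\otimes\overline{o}\to m=o\otimes\overline{\iota}$ (hence $\text{\dh'}\to\text{\th'}$, $\text{\th}\to\text{\dh}$, $c(l)\to c(m)$, $b(\overline{m})\to b(n)$, $b(l)\to b(m)$), not an exchange. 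Your identification of $b(m)=0$ as the relevant vanishing coefficient is however correct, so the substance of the argument is fine and only the stated dictionary needs fixing.
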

\begin{proof}
Since both sides of the equalities are spin weighted functions of the same weight ($(\frac{n_0}{2}-1, \frac{n_0}{2})$ for the first and $(\frac{n_0}{2},\frac{n_0}{2}-1)$ for the second), it is enough to check the equality in a local trivialization near each point. We do it for the first equality (the second is similar). Let $x_0 \in \mathcal{M}$, let $(o,\iota)$ be a local section of $\mathcal{A}_0$. Thanks to the bold notation, it is easy to compute the components in local trivializations associated to this local section:
\begin{align*}
(D\phi)(\overline{o},o,...,o) =& (\nabla_{\iota\otimes\overline{o}}\phi)(o,...,o) - (\nabla_{o\otimes\overline{o}}\phi)(\iota,o,...,o)\\
=& \overline{m}(\phi(o,...,o))-n_0\phi(\nabla_{\overline{m}} o, o,...,o) - l(\phi(\iota, o,...,o)) + \phi(\nabla_{l}\iota, o,...,o) + (n_0-1) \phi(\iota, \nabla_l o,..., o)\\
=& (\overline{m} - n_0 a(\overline{m}) + c(l))(\phi(o,...,o)) - (l+n_0 b(\overline{m})-(n_0-2)a(l))(\phi(\iota, o, ...,o)) \\
&+ (n_0-1)b(l)\phi(\iota, \iota, o, ..., o)
\end{align*}
By remark \ref{vanishingSpinCoeff}, we have $b(l) = 0$. Moreover, since $\phi(\boldsymbol{o},...,\boldsymbol{o})\in \Gamma\left(\mathcal{B}\left(\frac{n_0}{2}, \frac{n_0}{2}\right)\right)$, the expression of \text{\dh'} on this bundle expressed in the local trivialization induced by $(o,\iota)$ is $\overline{m} - n_0 a(\overline{m})$ by remark \ref{expressionGHP}. Similarly, the fact that $\phi(\iota, o, ...,o)\in \Gamma\left(\mathcal{B}\left(\frac{n_0}{2}-1, \frac{n_0}{2}-1\right)\right)$ gives the local expression $l-(n_0-2)a(l)$ for \th. By definition, $c(l)$ and $b(\overline{m})$ are the local expressions for $\boldsymbol{c(l)}$ and $\boldsymbol{b(\overline{m})}$ in the local trivialization induced by $(o,\iota)$. Therefore, we have the desired equality.
\end{proof}

\begin{prop}
We have the following relations at the level of spin weighted components:
\begin{align*}
(\text{\th}+n_0\boldsymbol{b(\overline{m})}+\overline{\boldsymbol{b(\overline{m})}})(D\phi)(\overline{\boldsymbol{\iota}},\boldsymbol{o}, ..., \boldsymbol{o})-(\text{\dh}+\overline{\boldsymbol{c(l)}}+n_0\boldsymbol{b(n)})(D\phi)(\overline{\boldsymbol{o}},...,\boldsymbol{o}) &= G_{\frac{n_0}{2}}\phi(\boldsymbol{o},...,\boldsymbol{o})
\end{align*}
where
\[
G_s = (\text{\th}+2s\boldsymbol{b(\overline{m})}+\overline{\boldsymbol{b(\overline{m})}})(\text{\th'}+\boldsymbol{c(m)}) - (\text{\dh} + \overline{\boldsymbol{c(l)}} + 2s \boldsymbol{b(n)})(\text{\dh'}+\boldsymbol{c(l)})
\]
is a smooth differential operator acting on the bundle $\mathcal{B}(s,s)$
\end{prop}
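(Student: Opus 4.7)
The plan is to substitute the two identities of Proposition \ref{SWDirac} into the left-hand side and regroup. Setting $A := \phi(\boldsymbol{o},\ldots,\boldsymbol{o})$ and $B := \phi(\boldsymbol{\iota},\boldsymbol{o},\ldots,\boldsymbol{o})$, and writing
\[
P := \text{\th}+n_0\boldsymbol{b(\overline{m})},\quad R := \text{\dh}+n_0\boldsymbol{b(n)},\quad P' := P + \overline{\boldsymbol{b(\overline{m})}},\quad R' := R + \overline{\boldsymbol{c(l)}},
\]
the two identities read $(D\phi)(\overline{\boldsymbol{\iota}},\boldsymbol{o},\ldots,\boldsymbol{o}) = (\text{\th'}+\boldsymbol{c(m)})A - RB$ and $(D\phi)(\overline{\boldsymbol{o}},\boldsymbol{o},\ldots,\boldsymbol{o}) = (\text{\dh'}+\boldsymbol{c(l)})A - PB$. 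Substituting, the left-hand side becomes
\[
\bigl(P'(\text{\th'}+\boldsymbol{c(m)}) - R'(\text{\dh'}+\boldsymbol{c(l)})\bigr)A - (P'R - R'P)B,
\]
and the first term is exactly $G_{n_0/2} A$ by the definition of $G_s$ with $s = n_0/2$. The whole proof thus reduces to showing that $(P'R - R'P)B = 0$, where $B$ is a spin-weighted function of weights $\bigl(\tfrac{n_0}{2}-1, \tfrac{n_0}{2}-1\bigr)$ by Proposition \ref{correspondence}.

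Expanding $P'R - R'P$ and using that the commutator of a multiplication operator with a derivation equals multiplication by the derivative of the function, the desired identity takes the form
\[
[\text{\th}, \text{\dh}]B + n_0\bigl(\text{\th}(\boldsymbol{b(n)}) - \text{\dh}(\boldsymbol{b(\overline{m})})\bigr)B + \overline{\boldsymbol{b(\overline{m})}}\,\text{\dh}B - \overline{\boldsymbol{c(l)}}\,\text{\th}B + n_0\bigl(\overline{\boldsymbol{b(\overline{m})}}\,\boldsymbol{b(n)} - \overline{\boldsymbol{c(l)}}\,\boldsymbol{b(\overline{m})}\bigr)B = 0.
\]
I would verify this by passing to a local trivialization associated with a smooth section $(o,\iota)$ of $\mathcal{A}_0$, where the GHP operators take the explicit forms of remark \ref{expressionGHP} and the commutator $[\text{\th},\text{\dh}]$ can be computed from $[l,m]$ and from the derivatives of the spin coefficients via the standard Newman–Penrose Ricci identities. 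The computation is dramatically simplified by the Petrov type D vacuum hypothesis: remark \ref{vanishingSpinCoeff} gives $\kappa = \sigma = \lambda = \nu = 0$, and among the Weyl scalars only $\boldsymbol{\Psi}_2$ survives (remark \ref{decompoMaxwellWeyl}). The residual $\boldsymbol{\Psi}_2$-contribution to $[\text{\th},\text{\dh}]B$, read off from the Ricci identities at weight $\bigl(\tfrac{n_0}{2}-1,\tfrac{n_0}{2}-1\bigr)$, must cancel against the $\boldsymbol{\Psi}_2$-contribution hidden in $\text{\th}(\boldsymbol{b(n)}) - \text{\dh}(\boldsymbol{b(\overline{m})})$ coming from the Bianchi identities in vacuum.

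The main obstacle is the sheer length of the bookkeeping and the need to adhere rigorously to the paper's sign conventions (Penrose–Rindler, as noted at the end of remark \ref{decompoMaxwellWeyl}); there are many similar-looking terms to track. A more conceptual alternative is to view the cross-term cancellation as the statement that the Bianchi identity, contracted against a doubly principal null direction, produces a closed second-order equation on a single spin-weighted component of $\phi$. This is exactly the mechanism underlying Teukolsky's master equation and explains why the identity is specific to Petrov type D: it is the double multiplicity of the principal null directions (remark \ref{vanishingSpinCoeff}) that forces the obstruction to cancel.
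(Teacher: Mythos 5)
Your algebraic reduction is exactly the paper's: substituting Proposition \ref{SWDirac} regroups the left-hand side as $G_{n_0/2}A - (P'R - R'P)B$ (with your notation), and the whole proposition collapses to the single operator identity $P'R - R'P = 0$ on $\mathcal{B}\bigl(\tfrac{n_0}{2}-1,\tfrac{n_0}{2}-1\bigr)$, which is the paper's equation \eqref{magicalRelation}. The paper then expands this in a local trivialization via remark \ref{expressionGHP}, translates it into Newman--Penrose spin-coefficient notation, and recognizes it as Teukolsky's operator identity (2.11) from \cite{teukolsky1973perturbations} (the case $p = n_0-2$, $q = -n_0$), whereas you sketch a re-derivation through the GHP commutator $[\text{\th},\text{\dh}]$ and the NP structure equations; these are the same verification carried out to different depths.

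One claim in your final paragraph is off, however. In type D vacuum with both $l$ and $n$ principal the vanishing is total: besides $\kappa=\sigma=\lambda=\nu=0$ (remark \ref{vanishingSpinCoeff}) one also has $\Psi_0=\Psi_1=\Psi_3=\Psi_4=0$ and all $\Phi_{ab}=0$. The GHP commutator $[\text{\th},\text{\dh}]$ at these weights contains curvature only through $\Psi_1$ and $\Phi_{01}$, both of which vanish, so it carries \emph{no} curvature term; and the derivatives $\text{\th}\,\boldsymbol{b(n)} - \text{\dh}\,\boldsymbol{b(\overline{m})}$ are controlled by the NP \emph{Ricci} identities (not the Bianchi identities --- $\boldsymbol{b(n)}$ and $\boldsymbol{b(\overline{m})}$ are spin coefficients $\tau$ and $\rho$, not curvature scalars), which under the same vanishing also contribute no $\boldsymbol{\Psi_2}$. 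There is therefore no $\boldsymbol{\Psi_2}$-cancellation to orchestrate: every curvature contribution drops individually, which is precisely why \eqref{magicalRelation} is a pure spin-coefficient identity. The $\boldsymbol{\Psi_2}$ term only enters later, added by hand in the definition $T_s := 2G_s + 4(s-1)\bigl(s-\tfrac{1}{2}\bigr)\boldsymbol{\Psi_2}$. This misconception would not derail the bookkeeping --- you would simply find nothing to cancel --- but the mechanism you describe is not the one at work.
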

\begin{proof}
We apply $(\text{\th}+n_0\boldsymbol{b(\overline{m})}+\overline{\boldsymbol{b(\overline{m})}})$ on the left of \eqref{SWDirac2} (in proposition \ref{SWDirac}) and $(\text{\dh} + \overline{\boldsymbol{c(l)}} + n_0 \boldsymbol{b(n)})$ on the left of \eqref{SWDirac1} and we take the difference. Then, we use the remarkable relation
\begin{align}
\label{magicalRelation}
(\text{\th}+n_0\boldsymbol{b(\overline{m})}+\overline{\boldsymbol{b(\overline{m})}})\left(\text{\dh} + n_0\boldsymbol{b(n)}\right) - (\text{\dh} + \overline{\boldsymbol{c(l)}} + n_0 \boldsymbol{b(n)})\left(\text{\th}+n\boldsymbol{b(\overline{m})}\right) = 0
\end{align}
as an operator acting on the bundle $\mathcal{B}\left(\frac{n_0}{2}, \frac{n_0}{2}\right)$. Therefore, the component $\phi(\boldsymbol{\iota}, \boldsymbol{o}, ..., \boldsymbol{o})$ disappears and we have the result. The last thing to prove is the relation \eqref{magicalRelation}.
It is enough to prove it in a local trivialization provided by a smooth local section $(o,\iota)$ of $\mathcal{A}_0$ near each point $x_0 \in \mathcal{M}$. Therefore, we want to prove that:
\begin{align*}
\left(l-(n_0-1)a(l)+\overline{a(l)}+n_0b(\overline{m})+\overline{b(\overline{m})}\right)\left(m-(n_0-2)a(m) + n_0b(n)\right)&\\ - (m-(n_0-1)a(m)-\overline{a(\overline{m})} + \overline{c(l)} + n_0 b(n))\left(l-(n_0-2)a(l)+n_0b(\overline{m})\right) &= 0
\end{align*}
If we use remark \ref{computationNabla} to replace the occurences of $a$,$b$ and $c$ by spin coefficients, we rewrite this as:
\begin{align*}
(l-(n_0-1)\epsilon+\overline{\epsilon}-n_0\rho-\overline{\rho})\left(m-(n_0-2)\beta - n_0\tau\right)&\\
 - (m-(n_0-1)\beta-\overline{\alpha} + \overline{\pi} - n_0 \tau)\left(l-(n_0-2)\epsilon-n_0\rho\right)&=0
\end{align*}
 We see that this relation is a particular case of a relation between spin coefficients of a null tetrad with $l$ and $n$ in principal directions introduced by Teukolsky in \cite{teukolsky1973perturbations} (equation (2.11)). It is the case $p = n_0-2$ and $q = -n_0$.
\end{proof}

\begin{definition}
We define the Teukolsky operator $T_{s} := 2G_s+4(s-1)\left(s-\frac{1}{2}\right)\boldsymbol{\Psi_2}$. It is naturally a differential operator on $\mathcal{B}(s,s)$.
\end{definition}

\subsection{Formula for the Teukolsky operator on Kerr in a trivialization provided by the Kinnersley tetrad}
We consider the local section $(o_m, \iota_m)$ of $\mathcal{A}_0$ defined on $U:=\R_t\times (r_0, +\infty)\times \mathbb{S}^2\setminus\left\{\phi = 0\right\}$. We recall that by definition $d((o_m,\iota_m))= (l,n,m)$ where $l,n,m,\overline{m}$ is the Kinnersley tetrad defined by \eqref{principalVectors}, \eqref{principalVectors2} and \eqref{vectorM}. Note that $(-o_m, -\iota_m)$ has also the property $d((-o_m, -\iota_m)) = (l,n,m)$ (it is the only other section of $\mathcal{A}_0$ defined on $U$ with this property). However, the expression of $T_s$ in a local trivialization provided by a section $(o,\iota)$ only depend on $d(o,\iota)$ (this follows from the fact that it is the case for operators \th, \th', {\dh} and {\dh'} and for the spin weighted functions $\boldsymbol{b(\overline{m})}$, $\boldsymbol{b(n)}$ and $\boldsymbol{c(l)}$). Therefore, it is correct to speak about the formula for the Teukolsky operator in a trivialization provided by the Kinnersley tetrad without further precision. But to fix the ideas, we consider here the differential operator $(T_s)_m$ ($T_s$ written in the local trivialization provided by $A_m$, it is therefore a differential operator on $U$). We use the expression of \th, \th', {\dh} and {\dh} computed in remark \ref{expressionGHP} and the definition of $\boldsymbol{b(\overline{m})}$, $\boldsymbol{b(n)}$ and $\boldsymbol{c(l)}$ to find for $s\in \frac{1}{2}\Z$:
\begin{align*}
(T_s)_m =& 2\left(l-(2s-1)a(l)+\overline{a(l)}+2sb(\overline{m})+\overline{b(\overline{m})}\right)(n - 2s a(n) + c(m))\\
&-2\left(m-(2s-1)a(m)-\overline{a(\overline{m})}+\overline{c(l)}+2sb(n)\right)(\overline{m}-2sa(\overline{m})+c(l))\\
& + 4(s-1)\left(s-\frac{1}{2}\right)\Psi_2
\end{align*}
where $a$, $b$, $c$ and $\Psi_2$ have to be computed with respect to the tetrad $(l,n,m,\overline{m})$ (see remark \ref{computationNabla}). The computation of these coefficients is done in \cite{chandrasekhar1998mathematical} (chapter 6, section 56, equation (175) and (180) but with the opposite sign convention for $\Psi_2$) and we finally find:
\begin{align*}
(r^2+a^2\cos^2(\theta))(T_s)_m =& \left(\frac{(r^2+a^2)^2}{\Delta_r} - a^2 \sin^2\theta\right)\partial_t^2 + \frac{4Mar}{\Delta_r}\partial_t\partial_\phi + \left(\frac{a^2}{\Delta_r} - \frac{1}{\sin^2\theta}\right)\partial_\phi^2 - \Delta_r^{-s}\partial_r\left(\Delta_r^{s+1}\partial_r\right)\\
& - \frac{1}{\sin\theta}\partial_\theta \left(\sin\theta \partial_\theta\right) - 2s\left(\frac{a(r-M)}{\Delta_r}+\frac{i\cos\theta}{\sin^2\theta}\right)\partial_\phi - 2s\left(\frac{M(r^2-a^2)}{\Delta_r} - r - i a \cos \theta\right)\partial_t\\
& + \left(s^2 \cot^2\theta - s\right)
\end{align*}

\bibliography{biblio}
\end{document}